 \numberwithin {equation}{section}
\newtheorem{theo}{Theorem}[section]
\newtheorem{corol}[theo]{Corollary}
\newtheorem{prop}[theo]{Proposition}
\theoremstyle{definition}
\newtheorem{remark}[theo]{Remark}
\newtheorem{example}[theo]{Example}
\newtheorem{defi}[theo]{Definition}
\newcommand{\dom}{\operatorname{dom}}
\newcommand{\pq}{\preccurlyeq}
\newcommand{\sms}{\smallsetminus}
\begin{document}
\title{Ekeland variational principle and its equivalents in  $T_1$-quasi-uniform spaces}
\author{S. Cobza\c{s}}
\address{Babe\c{s}-Bolyai University, Department of Mathematics, Cluj-Napoca, Romania}
\email{scobzas@math.ubbcluj.ro}

\begin{abstract}
  The present paper is concerned with Ekeland Variational Principle (EkVP) and its equivalents (Caristi-Kirk fixed point theorem, Takahashi minimization principle,
Oettli-Th\'era equilibrium version of EkVP) in quasi-uniform spaces. These extend some results proved by  Hamel and L\"ohne (Kluwer 2003) and Hamel, Nonlinear Anal. \textbf{62} (2005), 913--924,
  in uniform spaces, as well as those proved in quasi-metric spaces by various authors. The case of  $F$-quasi-gauge spaces, a non-symmetric version of $F$-gauge spaces introduced by   Fang, J. Math. Anal. Appl. \textbf{202} (1996),   398--412, is also considered. The paper ends with the quasi-uniform versions of some minimization principles proved by Arutyunov and Gel'man, Zh. Vychisl. Mat. Mat. Fiz. \textbf{49} (2009),  1167--1174, and Arutyunov, Proc. Steklov Inst. Math. \textbf{291} (2015), no.~1, 24--37, in complete metric spaces.

  \textbf{MSC 2020}:\; 58E30, 54E15, 47H10, 90C33

  \textbf{Key words:}  Ekeland Variational Principle, Takahashi minimization principle, equilibrium problems, uniform spaces, quasi-uniform spaces, gauge spaces,
 quasi-gauge spaces, completeness in quasi-uniform spaces,
Ekeland Variational Principle, Takahashi minimization principle, equilibrium problems, quasi-uniform spaces,  quasi-gauge spaces, completeness in quasi-uniform spaces

  \end{abstract}

\date{\today}
\maketitle

\section{Introduction}

Ekeland Variational Principle (EkVP), announced by Ekeland in \cite{ek72}, with the proof published in \cite{ek74}, asserts that a small perturbation of a lower semicontinuous (lsc) function $f$ defined on a complete metric space $X$ attains its strict minimum. It turned out that the completeness of $X$ is also necessary for the  validity of EkVP
(see \cite{cobz18}  for various results forcing completeness). Also it is equivalent to many results in fixed point theory (Caristi-Kirk fixed point theorem), geometry of Banach spaces (Dane\v{s} drop theorem, Bishop-Phelps  result on the density of support functionals) and has many applications to  optimization, differentiation of convex and Lipschitz functions, economics, biology, etc. Some of these applications are presented in Ekeland's paper \cite{ek79}.

There are numerous extensions of EkVP obtained either relaxing the conditions on the function $f$ (e.g. considering functions with values in an ordered vector space) or considering spaces more general than the metric ones (uniform spaces, quasi-metric spaces, $w$-metric spaces, partial-metric spaces, etc), or both.

Extensions of EkVP and other variational  principles to quasi-metric spaces (the non-symmetric analogs of metric spaces) were obtained in \cite{kassay19}, \cite{bcs18}, \cite{cobz11}, \cite{cobz19}, \cite{karap-romag15}, and to asymmetric locally convex spaces in \cite{cobz12}.

In the present paper we show that   EkVP and its equivalents also hold  for sequentially lsc functions defined on  sequentially right $K$-complete quasi-uniform spaces. These extend some results
obtained by Hamel   \cite{hamel01}, \cite{hamel05} and Hamel and L\"ohne \cite{hamel-lohne03}  (see also Frigon \cite{frigon11})  in uniform spaces as well as by Fang \cite{fang96} in $F$-gauge spaces.  Quasi-uniform spaces are the non-symmetric version of uniform spaces (see Section \ref{S.qu-sp} for details).

Notice that some results on variational principles in quasi-uniform spaces were obtained by Fierro \cite{fierro17a}, working with another notion of completeness.

The structure of the paper is the following.  For convenience we present in Section \ref{S.qu-sp}  the basic notions and results on quasi-uniform spaces which are used throughout the paper, while Section \ref{S.Fqg-sp} contains a brief introduction to Fang quasi-gauge spaces. Ekeland Variational Principle,
Caristi-Kirk fixed point theorem, Takahashi and Arutyunov minimization principles and Oettli-Th\'era version of EkVP in $F$-quasi-gauge spaces are presented in Section \ref{S.Ek-Fqg}.  The equivalence of these principles is  proved in Section \ref{S.equiv}. The last section (Section \ref{S.fcs}) is concerned with some classes of functions, more general than the lower semicontinuous ones,  and the extension to quasi-uniform spaces of a minimization principle proved by Arutyunov and  Gel'man  \cite{arut-gel09} and Arutyunov \cite{arut15} in metric spaces.

In \cite{bcs18}, \cite{cobz11},  \cite{cobz19} and \cite{karap-romag15} it is shown that the validity of EkVP (or of its equivalent -- Caristi-Kirk fixed point theorem) implies some completeness properties of the underlying quasi-metric space. I did not succeed to prove a similar result in the quasi-uniform case.

\section{Quasi-uniform spaces}\label{S.qu-sp}

Quasi-uniform spaces are uniform spaces without the symmetry axiom ((QU5) from below). Their theory is   much  more complicated than that of uniform spaces,
mainly in what concerns completeness and compactness (see, e.g., \cite{kunzi-reily93}). A good introduction to quasi-uniform spaces is given in the book \cite{FL}. For subsequent developments
one can consult, for instance, \cite{Cobzas} or \cite{kunzi09}  and the references quoted therein.

For a set $X$, denote by   $\Delta(X)=\{(x,x) : x\in X\}$   the \emph{diagonal} of $X$
and, for $M,N\subseteq X\times X,$ let
\begin{equation*}
M\circ N =\{(x,z)\in X\times X : \exists y\in X,\; (x,y)\in
M\;\mbox{and} \; (y,z)\in N\}\;,
\end{equation*}
and
$$
M^{-1}=\{(x,y)\in X\times X: (y,x)\in M\}\,.$$

A \emph{quasi-uniformity} on  $X$ is a nonempty family  $\mathcal U$ of subsets of $X\times X$ such that

\begin{equation*}
\begin{aligned}
\mbox{(QU1)}&\quad \Delta(X)\subseteq U ;\\
\mbox{(QU2)}&\quad V\subseteq X\times X\;\mbox{ and }\; U\subseteq V \;\Rightarrow\;   V\in \mathcal{U};\\
\mbox{(QU3)}&\quad   U_1,U_2\in \mathcal{U}\;\Rightarrow\; U_1\cap U_2\in \mathcal{U};\\
\mbox{(QU4)}&\quad   \mbox{there exists } V\in \mathcal{U} \mbox{ such that }  V\circ V\subseteq U\,,
\end{aligned}
\end{equation*}
for all $U,U_1,U_2\in \mathcal U$.

If further
$$\mbox{(QU5)}\quad  \;  U^{-1}\in \mathcal{U},$$
 for all $  U\in \mathcal{U},$ then $\mathcal U$ is called a \emph{uniformity}.
\begin{remark} Conditions (QU2) and (QU3) show that $\mathcal U$ is a filter on $X\times X.$
\end{remark}

If $\mathcal U$ is a quasi-uniformity on a set  $X$, then
$$
\mathcal U^{-1}=\{U^{-1}:U\in \mathcal U\}$$
is another quasi-uniformity on $X$, called the conjugate of $\mathcal U$. Also the family of sets
$$
 \{U\cap U^{-1} :U\in\mathcal U\}$$
 is a basis for a uniformity $\mathcal U^s$ on $X$, called the associated uniformity to $\mathcal U$, or the uniformity generated by $\mathcal U$.
 $\mathcal U^s$ is the smallest uniformity on $X\times X$ containing both $\mathcal U$ and $\mathcal U^{-1}$.

 For $U\in \mathcal{U}, \, x\in X$ and $Z\subseteq X$ put
$$
U(x) =\{y\in X: (x,y)\in U\}\quad\mbox{and}\quad U[Z]=\bigcup\{U(z):
z\in Z\}\;.
$$
\begin{remark}
  Observe that
  $$U(x)=\{x\}\circ U\,.$$

  For this reason some authors denote $\{x\}\circ U$ by $(x)U$ and $U\circ\{y\}$ by $U(y)$.
\end{remark}

Any quasi-uniformity $\mathcal U$ on a set $X$ generates a topology $\tau(\mathcal U)$ on $X$ for which
the family of sets
 \begin{equation*}
\{ U(x): U\in \mathcal{U}\}\end{equation*}
is a basis of neighborhoods of the point $x\in X$ (actually it agrees with the neighborhood filter at $x$).

As a space with two topologies $\tau(\mathcal U)$ and $\tau(\mathcal U^{-1})$, a quasi-uniform space can be also viewed as a bitopological space in the sense
 of Kelly \cite{kelly63}.

The separation properties of the topology induced by a quasi-uniformity are contained in the following   proposition.

 \begin{prop}[\cite{FL}, Proposition 1.9]
 Let $(X,\mathcal U)$ be a quasi-uniform space and $\tau(\mathcal U)$ the topology generated by $\mathcal U$.
 \begin{enumerate}
 \item[\rm 1.] The topology $\tau(\mathcal U)$ is $T_0$ if and only if $\bigcap \mathcal U$ is an  order on $X$ if and only if $W\cap W^{-1}=\Delta(X)$,
 where $W=\bigcap\mathcal U.$
 \item[\rm 2.] The topology $\tau(\mathcal U)$ is $T_0$ if and only if $\tau(\mathcal U^s)$ is $T_2$.
 \item[\rm 3.] The topology $\tau(\mathcal U)$ is $T_1$ if and only if $\,\bigcap \mathcal U=\Delta(X).$
 \item[\rm 4.] The topology $\tau(\mathcal U)$ is $T_2$ if and only if $\bigcap \{U\cap U^{-1}:U\in\mathcal U\}=\Delta(X).$

\end{enumerate}\end{prop}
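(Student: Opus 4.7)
The plan is to translate each separation axiom for $\tau(\mathcal U)$ into a condition on $W:=\bigcap\mathcal U\subseteq X\times X$, after first recording a structural property of $W$ itself. Namely, $W$ is always a preorder on $X$: reflexivity $\Delta(X)\subseteq W$ is immediate from (QU1), and transitivity follows from (QU4): for any $U\in\mathcal U$ pick $V\in\mathcal U$ with $V\circ V\subseteq U$; since $W\subseteq V$ this gives $W\circ W\subseteq V\circ V\subseteq U$, and intersecting over $U$ yields $W\circ W\subseteq W$. With this, the second equivalence in Part~1 is just antisymmetry: $W$ is an order iff $W\cap W^{-1}=\Delta(X)$.

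The separation equivalences are unfoldings of the definitions using the neighbourhood basis $\{U(x):U\in\mathcal U\}$ at each point $x$. For Part~1, $\tau(\mathcal U)$ is $T_0$ iff for every $x\ne y$ some $U\in\mathcal U$ satisfies $(x,y)\notin U$ or $(y,x)\notin U$, which after intersecting over $U$ is exactly $(x,y)\notin W\cap W^{-1}$. For Part~3, $T_1$ demands a neighbourhood of $x$ missing $y$ for every pair $x\ne y$, i.e.\ some $U\in\mathcal U$ with $(x,y)\notin U$; combined with reflexivity this gives $W=\Delta(X)$. Part~2 then reduces to Part~1 via the identity $\bigcap\mathcal U^s=\bigcap_{U\in\mathcal U}(U\cap U^{-1})=W\cap W^{-1}$ (intersection distributes over the symmetrisation) together with the classical theorem that a uniformity generates a $T_2$ topology iff the intersection of its entourages is the diagonal.

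For Part~4 the forward implication is direct: given $T_2$ and $x\ne y$, choose $U_1,U_2\in\mathcal U$ with $U_1(x)\cap U_2(y)=\emptyset$, set $W_0:=U_1\cap U_2\in\mathcal U$, and evaluate $W_0(x)\cap W_0(y)=\emptyset$ at the witnesses $z=y$ and $z=x$; reflexivity forces $(x,y)\notin W_0$ and $(y,x)\notin W_0$, hence $(x,y)\notin W_0\cap W_0^{-1}$, and the intersection $\bigcap_{U\in\mathcal U}(U\cap U^{-1})$ reduces to $\Delta(X)$. The converse is the main obstacle: from $(x,y)\notin U\cap U^{-1}$ one only obtains the asymmetric information that $(x,y)\notin U$ or $(y,x)\notin U$, and because $U^{-1}$ typically fails to lie in $\mathcal U$, the standard uniform-space trick (take symmetric $V$ with $V\circ V\subseteq U$ and consider $V(x)$, $V(y)$) is unavailable. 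The crux is to exploit (QU4) together with the filter axioms to combine the two asymmetric pieces of information into a pair of entourages $V_1,V_2\in\mathcal U$ witnessing $V_1(x)\cap V_2(y)=\emptyset$, and this is the step where I expect the full strength of the intersection hypothesis to be required.
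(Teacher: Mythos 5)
The paper gives no proof of this proposition at all — it is quoted from Fletcher--Lindgren — so your argument can only be judged on its own terms. Parts 1--3 and your reduction of Part 2 are correct and complete: $W=\bigcap\mathcal U$ is always reflexive (QU1) and transitive (QU4), the $T_0$ and $T_1$ conditions unfold to $W\cap W^{-1}=\Delta(X)$ and $W=\Delta(X)$ respectively (using the filter property of $\mathcal U$ to merge the two entourages in the $T_0$ case), and the identity $\bigcap\mathcal U^s=W\cap W^{-1}$ together with the classical Hausdorff criterion for uniformities settles Part 2.

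The obstacle you ran into in the converse of Part 4 is real, and no amount of cleverness with (QU4) will remove it: as transcribed in the statement, Part 4 is false. By your own identity from Part 2, $\bigcap\{U\cap U^{-1}:U\in\mathcal U\}=W\cap W^{-1}$, so Part 4 as written would make $T_2$ equivalent to $T_0$, which already fails for the Sierpi\'nski space: take $X=\{0,1\}$, $U_0=\Delta(X)\cup\{(0,1)\}$ and $\mathcal U=\{U: U_0\subseteq U\subseteq X\times X\}$. Then $U_0\circ U_0=U_0$, so this is a quasi-uniformity; $U_0\cap U_0^{-1}=\Delta(X)$, so the right-hand condition holds; yet $\tau(\mathcal U)$ is the Sierpi\'nski topology ($U_0(0)=X$, $U_0(1)=\{1\}$), which is not even $T_1$. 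The intended statement, in this paper's conventions $U(x)=\{y:(x,y)\in U\}$ and $M\circ N=\{(x,z):\exists y,\ (x,y)\in M \mbox{ and } (y,z)\in N\}$, is $\bigcap\{U\circ U^{-1}:U\in\mathcal U\}=\Delta(X)$. With composition in place of intersection, both directions of Part 4 become the same one-line unfolding as in Parts 1 and 3: $z\in U(x)\cap U(y)$ iff $(x,z)\in U$ and $(z,y)\in U^{-1}$, so $U(x)\cap U(y)=\emptyset$ iff $(x,y)\notin U\circ U^{-1}$; merging the two entourages of the $T_2$ definition into one via the filter property and intersecting over $U$ gives the equivalence, with no need for (QU4) or any symmetrisation trick.
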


\begin{remark}
  As it was shown by Pervin \cite{pervin62} (see also \cite[Th. 1.1.55]{Cobzas} every topological  space $(X,\tau)$ is quasi-uniformizable, i.e. there exists a quasi-uniformity
  generating the topology $\tau$ For uniformities  this is true only for completely regular topologies, see \cite[Corollary 17, p. 188]{Kel}.
\end{remark}

\begin{remark}
  Sometimes we shall say that the quasi-uniformity $\mathcal U$ is $T_1$ (or other separation condition) understanding by this that the generated topology $\tau(\mathcal U)$ satisfies this condition.  The same will be with other topological notions: $\mathcal U$-convergence, $\mathcal U$-closedness, instead of $\tau(\mathcal U)$-convergence, $\tau(\mathcal U)$-closedness, etc.
\end{remark}
\begin{remark}
Suppose that    $ (X,\mathcal U)$ is a uniform space. Then the topology $\tau(\mathcal U)$  is $T_2$ if and only if $\bigcap\mathcal U=\Delta(X)$. Actually, in a uniform space,
the $T_0$ separation of the topology $\tau(\mathcal U)$ implies   $T_2$  and regularity (see Kelley \cite{Kel}).
\end{remark}

   A nonempty family $\mathcal{B}\subseteq \mathcal{U}$ is called a \emph{basis} for a quasi-uniformity  $\mathcal{U}$ if every $U\in \mathcal U$ contains a $B\in\mathcal B$.
A nonempty family $\mathcal{C}\subseteq \mathcal{U}$ is called a \emph{subbasis} for $\mathcal{U}$ if the family of all finite intersections of members in $\mathcal C$ is a basis for $\mathcal U$, i.e., for every $U\in\mathcal U$ there exist $n\in\mathbb{N}$ and $C_1,\dots,C_n$ in $\mathcal C$ such that  $\bigcap_{i=1}^n C_i\subseteq U.$ This means that $\mathcal B$ (respectively $\mathcal C$) is a basis
(subbasis) of the filter $\mathcal U$.

\begin{prop}\label{p1.base-qu} Let $X$ be a set.  A nonempty family $\mathcal B$ of subsets of $X$ is a basis for a quasi-uniformity $\mathcal U$ on $X$ if and only if it satisfies the following conditions
\begin{equation*}
\begin{aligned}
 {\rm (BQU1)}&\qquad  \forall B\in \mathcal{B},\; \Delta(X)\subseteq B;\\
{\rm (BQU2)}&\qquad \forall B\in \mathcal{B},\; \exists C\in \mathcal{B},\;
\mbox{ such that } \; C\circ C\subseteq B\;,\\
{\rm (BQU3)}&\qquad \forall B_1,B_2\in \mathcal{B},\; \exists B\in \mathcal{B}\;
\mbox{ such that } \; B\subseteq B_1\cap B_2\,.
\end{aligned}
\end{equation*}

In this case the quasi-uniformity $\mathcal U$ is given by
\begin{equation*}
\mathcal U=\{U\subseteq X\times X : \exists B\in\mathcal B,\, B\subseteq U\}\,.\end{equation*}
If $\mathcal B$ satisfies only (BQU1) and (BQU2), then it is a subbasis for the quasi-uniformity $\mathcal U$ given by
\begin{equation*}
\mathcal U=\{U\subseteq X\times X : \exists n\in\mathbb{N},\, \exists B_1,\dots,B_n\in\mathcal B,\, B_1\cap\dots\cap B_n\subseteq U\}\,.
\end{equation*}  \end{prop}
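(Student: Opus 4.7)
The plan is to establish both directions by directly checking axioms, with the subbasis statement following from the basis statement applied to the family of finite intersections.

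For the necessity direction, I would assume $\mathcal{B}$ is a basis for a quasi-uniformity $\mathcal{U}$ and verify (BQU1)--(BQU3). Since $\mathcal{B}\subseteq\mathcal{U}$, axiom (QU1) immediately gives (BQU1). For (BQU2), given $B\in\mathcal{B}\subseteq\mathcal{U}$, axiom (QU4) produces $V\in\mathcal{U}$ with $V\circ V\subseteq B$, and the basis property yields $C\in\mathcal{B}$ with $C\subseteq V$, whence $C\circ C\subseteq V\circ V\subseteq B$ by monotonicity of composition. For (BQU3), $B_1,B_2\in\mathcal{U}$ gives $B_1\cap B_2\in\mathcal{U}$ by (QU3), and again the basis property extracts some $B\in\mathcal{B}$ contained in $B_1\cap B_2$.

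For the sufficiency direction, I would define $\mathcal{U}:=\{U\subseteq X\times X:\exists B\in\mathcal{B},\;B\subseteq U\}$ and check the quasi-uniformity axioms. (QU1) and (QU2) are immediate from the definition and (BQU1). For (QU3), given $U_1,U_2\in\mathcal{U}$, pick $B_i\in\mathcal{B}$ with $B_i\subseteq U_i$ and apply (BQU3) to obtain $B\in\mathcal{B}$ with $B\subseteq B_1\cap B_2\subseteq U_1\cap U_2$. For (QU4), given $U\in\mathcal{U}$, choose $B\in\mathcal{B}$ with $B\subseteq U$, apply (BQU2) to get $C\in\mathcal{B}$ with $C\circ C\subseteq B$, and note that $V:=C\in\mathcal{U}$ satisfies $V\circ V\subseteq U$. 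By construction $\mathcal{B}$ is a basis for $\mathcal{U}$.

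For the subbasis statement, I would let $\mathcal{B}'$ denote the collection of all finite intersections $B_1\cap\dots\cap B_n$ of members of $\mathcal{B}$ and show $\mathcal{B}'$ satisfies (BQU1)--(BQU3), so the first part applies. (BQU1) and (BQU3) are clear (the latter is automatic for a family closed under finite intersections). The only point requiring a moment of care is (BQU2): given $B_1\cap\dots\cap B_n\in\mathcal{B}'$, use (BQU2) for $\mathcal{B}$ to choose $C_i\in\mathcal{B}$ with $C_i\circ C_i\subseteq B_i$, and set $C:=C_1\cap\dots\cap C_n\in\mathcal{B}'$. Since $C\subseteq C_i$ for each $i$, monotonicity of $\circ$ gives $C\circ C\subseteq C_i\circ C_i\subseteq B_i$, hence $C\circ C\subseteq\bigcap_{i=1}^n B_i$, as required.

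The whole argument is routine filter-style bookkeeping; the one recurrent ingredient worth isolating at the outset is the monotonicity property $A\subseteq A'$ and $B\subseteq B'$ imply $A\circ B\subseteq A'\circ B'$, which is used silently in (BQU2) both for the forward implication and for the subbasis construction. There is no genuine obstacle, so I expect no surprises.
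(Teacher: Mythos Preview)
Your argument is correct and is exactly the standard verification one would expect. The paper, however, states this proposition without proof (it is treated as a well-known foundational fact about quasi-uniformities), so there is nothing to compare against; your write-up would serve perfectly well as the omitted proof.
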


The following result holds.
\begin{prop}
  If $\mathcal B$ is a basis (subbasis) for a quasi-uniformity $\mathcal U$ on $X$, then, for every $x\in X,$ $\{B(x) : B\in\mathcal B\}$ is a neighborhood basis (subbasis)    at $x$ with respect to  the topology $\tau(\mathcal U).$
\end{prop}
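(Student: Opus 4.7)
The plan is to reduce everything to the neighborhood basis $\{U(x):U\in\mathcal U\}$ at $x$ with respect to $\tau(\mathcal U)$, which was recorded right after the definition of the generated topology. Thus it suffices to compare the family $\{B(x):B\in\mathcal B\}$ (resp.\ its finite intersections) with the family $\{U(x):U\in\mathcal U\}$, and the argument splits into the basis case and the subbasis case.

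For the basis case, I would argue by two inclusions. First, every $B\in\mathcal B$ lies in $\mathcal U$, so each $B(x)$ is itself a neighborhood of $x$. Conversely, given an arbitrary $U\in\mathcal U$, by the defining property of a basis there is some $B\in\mathcal B$ with $B\subseteq U$; but then clearly $B(x)=\{y:(x,y)\in B\}\subseteq\{y:(x,y)\in U\}=U(x)$. These two facts together say exactly that $\{B(x):B\in\mathcal B\}$ is a neighborhood basis at $x$.

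For the subbasis case, the key small observation I would record first is the set-theoretic identity
\begin{equation*}
(B_1\cap\dots\cap B_n)(x)=B_1(x)\cap\dots\cap B_n(x),
\end{equation*}
which follows immediately from the definition $B(x)=\{y:(x,y)\in B\}$. Now if $U\in\mathcal U$ is arbitrary, the subbasis property gives $B_1,\dots,B_n\in\mathcal B$ with $B_1\cap\dots\cap B_n\subseteq U$, so by the identity $B_1(x)\cap\dots\cap B_n(x)\subseteq U(x)$. Combined with the fact that each $B(x)$ is itself a neighborhood of $x$ (since $B\in\mathcal U$), this shows that finite intersections of members of $\{B(x):B\in\mathcal B\}$ form a neighborhood basis at $x$, i.e.\ $\{B(x):B\in\mathcal B\}$ is a neighborhood subbasis.

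There is no real obstacle here; the statement is essentially a transfer of the basis/subbasis relation from $\mathcal U$ to the neighborhood filter at $x$, and the only non-trivial ingredient is the distributivity of the operation $B\mapsto B(x)$ over finite intersections, which is immediate from the definition.
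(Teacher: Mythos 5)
Your proof is correct and is exactly the standard argument the paper has in mind (the paper states this proposition without proof, as an immediate consequence of the definitions and of the fact that $\{U(x):U\in\mathcal U\}$ is the neighborhood filter at $x$). The two inclusions in the basis case and the identity $(B_1\cap\dots\cap B_n)(x)=B_1(x)\cap\dots\cap B_n(x)$ in the subbasis case are precisely the points that need to be checked, and you check them.
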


By analogy with the uniform case one can define quasi-uniformly continuous mappings. A mapping $f$
between two quasi-uniform spaces $(X,\mathcal{U})$ and $(Y,\mathcal W)$ is
called \emph{quasi-uniformly continuous} if for every $W\in
\mathcal W$ there exists $U\in \mathcal{U}$ such that $(f(x),f(y))\in
W$ for all $(x,y)\in U.$

If $\tilde f:X\times X\to Y\times Y$ is given by
\begin{equation*}
\tilde f(x,y)=(f(x),f(y)),\, x,y\in X,\end{equation*}
then the quasi-uniform continuity of the function $f$ is equivalent to the condition
\begin{equation*}
\tilde f^{-1}(W)\in \mathcal U\,,\end{equation*}
for all $W\in\mathcal W.$

A \emph{quasi-uniform isomorphism} is a bijective quasi-uniformly continuous  function $f$ such that the inverse function $f^{-1}$ is also  quasi-uniformly continuous.

By the definition of the topology
generated by a quasi-uniformity, it is clear that a
quasi-uniformly continuous mapping is continuous with respect to
the topologies $\tau(\mathcal{U}),\, \tau(\mathcal W).$ Actually the following stronger result holds.
\begin{prop}
Let $(X,\mathcal{U}),\,(Y,\mathcal W)$ be quasi-uniform spaces and let $f:(X,\mathcal U)\to (Y,\mathcal W)$ be  quasi-uniformly continuous.  Then the following properties hold.
\begin{enumerate}\item[\rm 1.]
The function $f$  is also quasi-uniformly continuous  from  $(X,\mathcal U^{-1})$ to $(Y,\mathcal W^{-1})$ and uniformly continuous   from  $(X,\mathcal U^s)$ to $(Y,\mathcal W^s)$.
\item[\rm 2.]
The function $f$ is $\tau(\mathcal U)$-$\tau(\mathcal W)$, $\tau(\mathcal U^{-1})$-$\tau(\mathcal W^{-1})$ and $\tau(\mathcal U^s)$-$\tau(\mathcal W^s)$ continuous.
\end{enumerate}\end{prop}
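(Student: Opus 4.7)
The plan is to reduce everything to the defining condition that $\tilde f^{-1}(W)\in\mathcal U$ whenever $W\in\mathcal W$, together with two elementary set-theoretic identities: $\tilde f^{-1}(W^{-1})=(\tilde f^{-1}(W))^{-1}$, which is immediate from $\tilde f(y,x)=(f(y),f(x))$, and $\tilde f^{-1}(W_1\cap W_2)=\tilde f^{-1}(W_1)\cap \tilde f^{-1}(W_2)$. Everything else is bookkeeping with bases and the fact that the systems $\mathcal U^{-1}$ and $\mathcal U^s$ are obtained from $\mathcal U$ by inversion and by symmetrization.

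For the first half of part~1, I fix $W\in\mathcal W$ and must exhibit a member of $\mathcal U^{-1}$ inside $\tilde f^{-1}(W^{-1})$. By hypothesis there is $U\in\mathcal U$ with $U\subseteq \tilde f^{-1}(W)$; inverting both sides and invoking the identity above gives $U^{-1}\subseteq \tilde f^{-1}(W^{-1})$, and $U^{-1}\in\mathcal U^{-1}$ by definition. Hence $f\colon(X,\mathcal U^{-1})\to(Y,\mathcal W^{-1})$ is quasi-uniformly continuous.

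For the second half, recall that $\{W\cap W^{-1} : W\in\mathcal W\}$ is a basis of $\mathcal W^s$, so it suffices to handle a basis element. Given $W\in\mathcal W$, apply the hypothesis to obtain $U_1\in\mathcal U$ with $U_1\subseteq\tilde f^{-1}(W)$, and apply the first half of part~1 to obtain $U_2\in\mathcal U$ with $U_2^{-1}\subseteq\tilde f^{-1}(W^{-1})$. Setting $U:=U_1\cap U_2\in\mathcal U$, we have $U\cap U^{-1}\subseteq U_1\cap U_2^{-1}\subseteq \tilde f^{-1}(W)\cap\tilde f^{-1}(W^{-1})=\tilde f^{-1}(W\cap W^{-1})$, and $U\cap U^{-1}$ is a basis element of $\mathcal U^s$. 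This yields uniform continuity $(X,\mathcal U^s)\to(Y,\mathcal W^s)$.

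Part~2 now follows from part~1 together with the general remark recorded earlier in the section that quasi-uniform continuity always implies continuity between the induced topologies: for $x_0\in X$ and a $\tau(\mathcal W)$-neighborhood of $f(x_0)$, choose $W\in\mathcal W$ with $W(f(x_0))$ contained in it, pick $U\in\mathcal U$ with $\tilde f^{-1}(W)\supseteq U$, and observe $f(U(x_0))\subseteq W(f(x_0))$. Applying this to the three pairs $(\mathcal U,\mathcal W)$, $(\mathcal U^{-1},\mathcal W^{-1})$, $(\mathcal U^s,\mathcal W^s)$ gives the three continuity statements. The only step requiring any attention is the verification that $U\cap U^{-1}$, rather than two unrelated elements, can be used simultaneously in the $\mathcal U^s$-argument; this is resolved by taking the common refinement $U_1\cap U_2$ before symmetrizing.
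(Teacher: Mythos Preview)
The paper states this proposition without proof, so there is no original argument to compare against. Your proof is correct and is the natural one: the identities $\tilde f^{-1}(W^{-1})=(\tilde f^{-1}(W))^{-1}$ and $\tilde f^{-1}(W_1\cap W_2)=\tilde f^{-1}(W_1)\cap\tilde f^{-1}(W_2)$ immediately transport quasi-uniform continuity to the conjugate and symmetrized structures, and part~2 then follows from the standing remark that quasi-uniform continuity implies continuity of the induced topologies. One small simplification you could make in the $\mathcal U^s$ step: since $\tilde f^{-1}(W)\in\mathcal U$ itself (not merely contains some $U\in\mathcal U$), setting $U:=\tilde f^{-1}(W)$ gives directly $\tilde f^{-1}(W\cap W^{-1})=U\cap U^{-1}$, avoiding the detour through $U_1,U_2$.
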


A function  $d:X\times X\to \mathbb{R}_+$ is called a \emph{quasi-pseudometric} if it satisfies the following conditions:
\begin{align*}
  \mbox{(QM1)}&\quad d(x,x)=0;\\
  \mbox{(QM1)}&\quad d(x,z)\le d(x,y)+d(y,z)\,,
  \end{align*}
  for all $x,y,z\in X$.

  If further
  $$
  \mbox{(QM3)}\quad d(x,y)=0=d(y,x)\;\Rightarrow\; x=y\,,
  $$
  then $d$ is called a \emph{quasi-metric}.  The mapping $\bar d(x,y)=d(y,x),\, x,y\in X,$ is also a quasi-pseudometric on $X$, called the conjugate of $d$ and
   $d^s(x,y)=\max\{d(x,y),d(y,x)\}$ is a pseudometric, which  is a metric if and only if $d$ is a quasi-metric.

   A quasi-pseudometric $d$  induces a quasi-uniformity $\mathcal U_d$ on $X$ having a basis formed of the sets
   \begin{equation}\label{def.Vd}
   V_{d,\varepsilon}=\{(x,y)\in X\times X: d(x,y)<\varepsilon\},\; \varepsilon >0\,.\end{equation}

     A typical example is that of the space $\mathbb{R}$ with the quasi-pseudometric induced by the asymmetric  seminorm $u:\mathbb{R}\to\mathbb{R}_+,\, u(\alpha)=\alpha^+,\,\alpha\in \mathbb{R},$
and the associated quasi-pseudometric  $d_u(\alpha,\beta)=u(\beta-\alpha),\, \alpha,\beta\in\mathbb{R}.$  The conjugate asymmetric seminorm is $\bar u(\alpha)=u(-\alpha)=\alpha^-$, with $\overline{d_u}=d_{\bar u}$, and the associated norm, the absolute value $u^s(\alpha)=|\alpha|.$  We shall denote the quasi-uniformity associated to $d_u$ by $\mathcal R_u.$

Since, for every $\alpha,\beta\in\mathbb{R},$
$$
\beta-\alpha<\varepsilon\iff u(\beta-\alpha)<\varepsilon\,,$$
the quasi-uniform continuity of a function $f$ from a quasi-uniform space $(X,\mathcal U)$ to $(\mathbb{R},\mathcal R_u)$ can be characterized in the following way.
\begin{prop} A function $f:(X,\mathcal U)\to (\mathbb{R},\mathcal R_u)$ is quasi-uniformly continuous if and only if for every $\varepsilon>0$ there exists $U\in\mathcal U$ such that
\begin{equation*}
u(f(y)-f(x))<\varepsilon\;\mbox{ for all }\; (x,y)\in U\,,\end{equation*}
or, equivalently,
\begin{equation*}
f(y)-f(x)<\varepsilon\;\mbox{ for all }\; (x,y)\in U\,.\end{equation*}
\end{prop}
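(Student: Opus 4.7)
The proof should be a direct unwinding of the definition of quasi-uniform continuity, using the basis $\{V_{d_u,\varepsilon}:\varepsilon>0\}$ of $\mathcal R_u$ described in \eqref{def.Vd}. The plan is therefore to reduce the defining condition, which quantifies over all $W\in\mathcal R_u$, to a condition quantified only over the basic entourages $V_{d_u,\varepsilon}$, and then to rewrite membership in $V_{d_u,\varepsilon}$ in terms of $u$ and the inequality $\beta-\alpha<\varepsilon$.

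For the forward direction, I assume $f$ is quasi-uniformly continuous and fix $\varepsilon>0$. Since $V_{d_u,\varepsilon}\in\mathcal R_u$, the definition of quasi-uniform continuity yields some $U\in\mathcal U$ with $\tilde f(U)\subseteq V_{d_u,\varepsilon}$; that is, $d_u(f(x),f(y))=u(f(y)-f(x))<\varepsilon$ for every $(x,y)\in U$. For the reverse direction, I take an arbitrary $W\in\mathcal R_u$ and use the basis property to choose $\varepsilon>0$ with $V_{d_u,\varepsilon}\subseteq W$; applying the hypothesis to this $\varepsilon$ produces $U\in\mathcal U$ such that $u(f(y)-f(x))<\varepsilon$ for every $(x,y)\in U$, and this exactly says $(f(x),f(y))\in V_{d_u,\varepsilon}\subseteq W$, proving quasi-uniform continuity.

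The equivalence of the two reformulations (with $u(f(y)-f(x))<\varepsilon$ and with $f(y)-f(x)<\varepsilon$) is then immediate from the observation displayed just above the proposition, namely $\beta-\alpha<\varepsilon\iff u(\beta-\alpha)<\varepsilon$, applied with $\alpha=f(x)$ and $\beta=f(y)$: if $f(y)-f(x)\le 0$ then $u(f(y)-f(x))=0<\varepsilon$ automatically, while if $f(y)-f(x)>0$ then $u(f(y)-f(x))=f(y)-f(x)$. There is no real obstacle here; the only thing to be careful about is to apply the reduction to the basis in the correct direction (use a basic entourage to test continuity, and approximate a general entourage from below by a basic one for the converse).
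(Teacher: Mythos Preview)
Your proof is correct and is exactly the natural argument: the paper does not supply a proof for this proposition, treating it as an immediate consequence of the definition of quasi-uniform continuity together with the displayed equivalence $\beta-\alpha<\varepsilon\iff u(\beta-\alpha)<\varepsilon$, which is precisely what you have spelled out.
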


One says also that   $f$ is $(\mathcal U,u)$-quasi-uniformly continuous.

 Let  $(X,\mathcal U)$ be  a quasi-uniform space.   One can also define a product quasi-uniformity  $\,\mathcal U\times \mathcal U$ on  $X\times X$ through the basis  formed by  the family  of subsets of $X^4$
\begin{equation*}\label{def.prod-qu}
\{((u,v),(x,y))\in X^2\times X^2: (u,x), (v,y)\in U\},\; U\in\mathcal U.\end{equation*}

It is the smallest quasi-uniformity on $X\times X$ making the canonical projections $p_1,p_2:(X\times X,\mathcal U\times \mathcal U)\to (X,\mathcal U)$ quasi-uniformly continuous.

The product quasi-uniformities $\mathcal U\times\mathcal U^{-1}$ and $\,\mathcal U^{-1}\times\mathcal U$ are defined analogously.

   The quasi-uniform continuity of a quasi-pseudometric $d$ can be characterized in the following way.  Actually it is an adaptation to the asymmetric case of Theorem 11, Ch. 6, from \cite{Kel}.
   \begin{prop}\label{p.quc-qm}
Let $(X,\mathcal{U})$ be a quasi-uniform space and $d$ a quasi-pseudometric on $X$.\begin{enumerate}\item[\rm 1.]
 The quasi-pseudometric $d$ is $(\mathcal U^{-1}\times\mathcal U,u)$-quasi-uniformly continuous if and only if
$V_{d,\varepsilon}\in\mathcal U$ for all $\varepsilon>0$.
 \item[\rm 2.] If $d$ is $(\mathcal U^{-1}\times\mathcal U,u)$-quasi-uniformly continuous, then for every fixed $x_0\in X$ the mapping $d(x_0,\cdot):X\to\mathbb{R}$ is $\tau(\mathcal U)$-usc and $d(\cdot,x_0):X\to\mathbb{R}$ is $\tau(\mathcal U)$-lsc.
\end{enumerate} \end{prop}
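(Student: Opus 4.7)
The plan is to unpack what $(\mathcal U^{-1}\times\mathcal U,u)$-quasi-uniform continuity of $d$ means concretely, after which both implications in part 1 become one-line arguments and part 1 feeds directly into part 2.

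First I will rewrite the hypothesis. A typical basis entourage of $\mathcal U^{-1}\times\mathcal U$ on $X\times X$ may be taken as
$$\{((u,v),(x,y))\in X^2\times X^2 : (x,u),\,(v,y)\in U\},\qquad U\in\mathcal U,$$
and, by the preceding characterization of $(\mathcal U,u)$-quasi-uniform continuity for $\mathbb{R}$-valued maps, the condition on $d$ reads: for every $\varepsilon>0$ there exists $U\in\mathcal U$ such that $d(x,y)-d(u,v)<\varepsilon$ whenever $(x,u),(v,y)\in U$.

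For part 1, the forward direction uses a diagonal trick: substituting $u=v=x$ is allowed since $(x,x)\in\Delta(X)\subseteq U$, and turns the above into $d(x,y)<\varepsilon$ for every $(x,y)\in U$; hence $U\subseteq V_{d,\varepsilon}$ and $V_{d,\varepsilon}\in\mathcal U$ by (QU2). For the converse I take $U:=V_{d,\varepsilon/2}$ and apply the two-step triangle inequality
$$d(x,y)\le d(x,u)+d(u,v)+d(v,y),$$
from which $d(x,y)-d(u,v)<\varepsilon$ is immediate.

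For part 2, I invoke part 1: $V_{d,\varepsilon}\in\mathcal U$, so $V_{d,\varepsilon}(y_0)=\{y\in X:d(y_0,y)<\varepsilon\}$ is a $\tau(\mathcal U)$-neighborhood of any fixed $y_0$. Applying the triangle inequalities $d(x_0,y)\le d(x_0,y_0)+d(y_0,y)$ and $d(y_0,x_0)\le d(y_0,y)+d(y,x_0)$ yields, for all $y\in V_{d,\varepsilon}(y_0)$, the bounds $d(x_0,y)<d(x_0,y_0)+\varepsilon$ (upper semicontinuity of $d(x_0,\cdot)$) and $d(y,x_0)>d(y_0,x_0)-\varepsilon$ (lower semicontinuity of $d(\cdot,x_0)$). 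There is no real conceptual obstacle; the only point requiring care is matching the asymmetric convention for $\mathcal U^{-1}\times\mathcal U$ (which places $U^{-1}$ in the first slot and $U$ in the second) to the correct side of the triangle inequality in each of the two semicontinuity conclusions. The argument is the direct asymmetric analogue of the classical proof in Kelley.
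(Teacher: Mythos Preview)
Your proof is correct and follows essentially the same approach as the paper. The only cosmetic differences are: in the converse of part~1 you take $U=V_{d,\varepsilon/2}$ and obtain $d(x,y)-d(u,v)<\varepsilon$ in one step, whereas the paper takes $U=V_{d,\varepsilon}$ and obtains $d(x,y)-d(u,v)<2\varepsilon$; and in part~2 you first invoke part~1 to identify $V_{d,\varepsilon}(y_0)$ as a $\tau(\mathcal U)$-neighborhood and then apply the triangle inequality for $d$, while the paper works directly from the quasi-uniform continuity estimate by plugging the particular pairs $((x_0,x),(x_0,y))$ and $((y,x_0),(x,x_0))$ into $W_U$---the underlying computation is the same.
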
 \begin{proof} 1.\;   A basis for the product quasi-uniformity $\mathcal U^{-1}\times \mathcal U$ is formed by the sets
\begin{equation}\label{def2.prod-qu}
  W_U:=\{((u,v),(x,y))\in X^2\times X^2: (x,u),(v,y)\in U\},\; U\in\mathcal U.\end{equation}

  Suppose that $d$ is quasi-uniformly continuous in the sense mentioned in proposition and let $\varepsilon>0.$ Then there exists $U\in \mathcal U$ such that
  $$
  ((u,v),(x,y))\in  W_U\; \Rightarrow\; d(x,y)-d(u,v) <\varepsilon\,,$$
  where $ W_U$ corresponds to $U$ by the  equality \eqref{def2.prod-qu}.

  If $(x,y)\in U,$ then $((x,x),(x,y))\in W_U$, so that $d(x,y)=d(x,y)-d(x,x)<\varepsilon$, that is $(x,y)\in V_{d,\varepsilon}.$
  This shows that $U\subseteq V_{d,\varepsilon}$ and so $V_{d,\varepsilon}\in \mathcal U$.

  Conversely, suppose that  $V_{d,\varepsilon}\in \mathcal U$ for every $\varepsilon>0.$  For given  $\varepsilon>0$ let  $U:=V_{d,\varepsilon}\in\mathcal U$ and
\begin{align*}
   W_U&=\{((u,v),(x,y))\in X^2\times X^2 : (x,u),(v,y)\in U\}\\
   &=\{((u,v),(x,y))\in X^2\times X^2 : d(x,u)<\varepsilon\,\mbox{ and }\, d(v,y)<\varepsilon\}\,.
\end{align*}

For every $((u,v),(x,y))\in W_U$,
\begin{align*}
  d(x,y)&\le d(x,u)+d(u,y)<\varepsilon+d(u,y)\\
  d(u,y)&\le d(u,v)+d(v,y)<d(u,v)+\varepsilon\,,
\end{align*}
which by addition yield
$$
d(x,y)<d(u,v)+2\varepsilon\,,$$
showing that $d$ is $(\mathcal U^{-1}\times\mathcal U,u)$-quasi-uniformly continuous

  2.\; Let  $\varepsilon>0$. By the $(\mathcal U^{-1}\times\mathcal U,u)$-quasi-uniform continuity of $d$ there exists $U\in\mathcal U$ such that
   \begin{equation}\label{eq1.lsc-qsm}
    d(x,y)<d(u,v)+\varepsilon\,,\end{equation}
    for all $((u,v),(x,y))\in  W_U$, where $ W_U$ is defined by \eqref{def2.prod-qu}.

    Let $x\in X$  and $y\in U(x)$. Then $(x_0,x_0),(x,y)\in U$ implies   $((x_0,x),(x_0,y))\in  W_U$ and, by \eqref{eq1.lsc-qsm},
     \begin{equation*}
    d(x_0,y)<d(x_0,x)+\varepsilon\,,\end{equation*}
    proving the $\tau(\mathcal U)$-usc  of $d(x_0,\cdot)$ at $x$.

    Now, $(x,y),(x_0,x_0)\in U$ implies $((y,x_0),(x,x_0))\in  W_U$ so that, by \eqref{eq1.lsc-qsm},
    $$
    d(x,x_0)<d(y,x_0)+\varepsilon\,,$$
    or equivalently,
    $$
    d(y,x_0)>d(x,x_0)-\varepsilon\,,$$
     proving the $\tau(\mathcal U)$-lsc  of $d(\cdot,x_0)$ at $x$.
\end{proof}

  The construction of a quasi-uniformity from a quasi-pseudometric can be extended to a family $P$ of quasi-pseudometrics on a set $X$. In this case, the family of sets
  $$
  \mathcal B_P=\{V_{d,\varepsilon} : d\in P,\, \varepsilon>0\}\,,$$
  where $V_{d,\varepsilon}$ is given by \eqref{def.Vd}, is a subbasis of a quasi-uniformity $\mathcal U_P$ on $X$.
   \begin{prop} \label{p.qu-qms} Let $P$ be a family of quasi-pseudometrics on a set $X$.
   The quasi-uniformity $\mathcal U_P$  is the smallest quasi-uniformity $\mathcal U$ on $X$ making all the quasi-pseudometrics in $P$, \;$(\mathcal U^{-1}\times\mathcal U,u)$-quasi-uniformly continuous.
    \end{prop}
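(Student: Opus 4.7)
The plan is to reduce everything to Proposition \ref{p.quc-qm}(1), which says that a quasi-pseudometric $d$ is $(\mathcal U^{-1}\times\mathcal U,u)$-quasi-uniformly continuous with respect to a quasi-uniformity $\mathcal U$ if and only if $V_{d,\varepsilon}\in\mathcal U$ for every $\varepsilon>0$. This makes the proposition essentially a bookkeeping statement about subbases of filters, and the argument splits cleanly into two parts.

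First I would verify that $\mathcal B_P$ is genuinely a subbasis for a quasi-uniformity, by checking (BQU1) and (BQU2) from Proposition \ref{p1.base-qu} on the generating sets $V_{d,\varepsilon}$. Condition (BQU1) is immediate from (QM1), since $d(x,x)=0<\varepsilon$ gives $(x,x)\in V_{d,\varepsilon}$. For (BQU2) one uses the triangle inequality (QM2): if $(x,y)\in V_{d,\varepsilon/2}$ and $(y,z)\in V_{d,\varepsilon/2}$, then $d(x,z)\le d(x,y)+d(y,z)<\varepsilon$, so $V_{d,\varepsilon/2}\circ V_{d,\varepsilon/2}\subseteq V_{d,\varepsilon}$. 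Hence $\mathcal U_P$, defined as the quasi-uniformity generated by $\mathcal B_P$ via the subbasis formula in Proposition \ref{p1.base-qu}, is well defined.

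Next, I would show that $\mathcal U_P$ has the required property. By construction $V_{d,\varepsilon}\in\mathcal B_P\subseteq\mathcal U_P$ for every $d\in P$ and every $\varepsilon>0$, so the ``if'' direction of Proposition \ref{p.quc-qm}(1) immediately yields that each $d\in P$ is $(\mathcal U_P^{-1}\times\mathcal U_P,u)$-quasi-uniformly continuous.

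Finally, for minimality, suppose $\mathcal U$ is any quasi-uniformity on $X$ making every $d\in P$ $(\mathcal U^{-1}\times\mathcal U,u)$-quasi-uniformly continuous. Applying the ``only if'' direction of Proposition \ref{p.quc-qm}(1) to each $d\in P$ yields $V_{d,\varepsilon}\in\mathcal U$ for all $\varepsilon>0$, so $\mathcal B_P\subseteq\mathcal U$. Since $\mathcal U$ is a filter (by (QU2) and (QU3)), it is closed under finite intersections and supersets, so it contains every set of the form $\bigcap_{i=1}^n V_{d_i,\varepsilon_i}$ and every superset thereof; by the subbasis description of $\mathcal U_P$ in Proposition \ref{p1.base-qu}, this gives $\mathcal U_P\subseteq\mathcal U$. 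I do not foresee any real obstacle here, since Proposition \ref{p.quc-qm}(1) has already done the geometric work; the proof is essentially a filter-theoretic translation of that equivalence.
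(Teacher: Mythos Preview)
Your argument is correct and is exactly the natural route: Proposition~\ref{p.quc-qm}(1) reduces the statement to the fact that $\mathcal U_P$ is the smallest quasi-uniformity (equivalently, filter satisfying (QU1)--(QU4)) containing the subbasis $\mathcal B_P$, and you carry out that filter-theoretic bookkeeping cleanly. The paper itself does not supply a proof of this proposition---it is stated without argument---so there is nothing to compare against; your write-up would serve perfectly well as the missing proof.
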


  As was shown by  Reilly  \cite{reily70a} (see also  \cite{reily70b}, \cite{reily73})    the following result, a nonsymmetric analog of a well-known result in uniform spaces (see Kelley \cite{Kel}),  holds.

  \begin{theo}
  Let $X$ be a set. Any quasi-uniformity $\mathcal U$ on  $X$   is generated by a family  of $(\mathcal U^{-1}\times\mathcal U,u)$-quasi-uni\-formly continuous quasi-pseudometrics in the way described above.\end{theo}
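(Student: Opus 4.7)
The plan is the Chittenden--Frink pseudometrization argument, adapted to the non-symmetric setting: for each $U_0\in\mathcal U$ I will build one quasi-pseudometric $d_{U_0}$, and then the family $P:=\{d_{U_0}:U_0\in\mathcal U\}$ will do the job.

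First, using (QU1), (QU3) and two applications of (QU4), I would construct inductively a decreasing sequence $U_0\supseteq U_1\supseteq U_2\supseteq\cdots$ of entourages satisfying the triple-composition relation
\[
U_{n+1}\circ U_{n+1}\circ U_{n+1}\subseteq U_n,\qquad n\ge 0.
\]
At the inductive step I pick $V\in\mathcal U$ with $V\circ V\subseteq U_n$, then $W\in\mathcal U$ with $W\circ W\subseteq V$, and set $U_{n+1}:=W\cap U_n$; the inclusion $\Delta(X)\subseteq W$ promotes the double composition of $V$ to a triple composition of $W$. Next I define $f:X\times X\to[0,1]$ by $f(x,y)=2^{-n}$ on $U_n\sms U_{n+1}$, $f(x,y)=1$ off $U_0$, and $f(x,y)=0$ on $\bigcap_n U_n$, and set
\[
d_{U_0}(x,y):=\inf\Bigl\{\sum_{i=0}^{k-1} f(x_i,x_{i+1}):\,k\ge 1,\;x_0=x,\;x_k=y\Bigr\}.
\]
Non-negativity, $d_{U_0}(x,x)=0$ and the triangle inequality follow immediately from this definition as an infimum over concatenable directed chains, so $d_{U_0}$ is a quasi-pseudometric.

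The main obstacle is the Chittenden inequality $f(x,y)\le 2\,d_{U_0}(x,y)$, which I would prove by induction on the chain length. For a chain $x_0,\dots,x_k$ of total weight $s$ with $2^{-n-1}\le s<2^{-n}$ I pick the largest index $j$ with $\sum_{i<j}f(x_i,x_{i+1})\le s/2$; the inductive bound applied to the subchains $x_0\dots x_j$ and $x_{j+1}\dots x_k$, together with $f(x_j,x_{j+1})\le s$ and the dyadic range of $f$, puts each of the pairs $(x_0,x_j),(x_j,x_{j+1}),(x_{j+1},x_k)$ inside $U_{n+1}$. The triple-composition property then delivers $(x_0,x_k)\in U_n$, whence $f(x_0,x_k)\le 2^{-n}\le 2s$. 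The key point is that composition is used in the directed order $x\to x_j\to x_{j+1}\to y$, so the absence of the symmetry axiom (QU5) causes no trouble.

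To finish, the trivial bound $d_{U_0}\le f$ (take a chain of length one) yields $U_n\subseteq V_{d_{U_0},\varepsilon}$ whenever $\varepsilon>2^{-n}$, so every $V_{d_{U_0},\varepsilon}$ lies in $\mathcal U$; by part~1 of Proposition~\ref{p.quc-qm} this is precisely the $(\mathcal U^{-1}\times\mathcal U,u)$-quasi-uniform continuity of $d_{U_0}$. Conversely, the Chittenden inequality gives $V_{d_{U_0},1/2}\subseteq U_1\subseteq U_0$, so that $U_0$ contains a basic entourage of $d_{U_0}$. Letting $U_0$ range over $\mathcal U$ thus produces a family $P$ of $(\mathcal U^{-1}\times\mathcal U,u)$-quasi-uniformly continuous quasi-pseudometrics with $\mathcal U_P=\mathcal U$ in the sense of Proposition~\ref{p.qu-qms}.
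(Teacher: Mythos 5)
Your argument is correct. Note that the paper itself gives no proof of this theorem at all -- it is stated as a quotation of Reilly's result -- so there is nothing to compare against except the cited source; what you have written is precisely the standard Chittenden--Frink chain construction that underlies Reilly's proof, and your key observation is the right one: in the triple-composition step the three pairs $(x_0,x_j)$, $(x_j,x_{j+1})$, $(x_{j+1},x_k)$ are composed in the directed order matching the paper's definition of $M\circ N$, so symmetry is never invoked and the output is only a quasi-pseudometric. The only points worth writing out in a final version are the degenerate cases of the induction (total weight $s=0$, and the splits $j=0$ or $j=k-1$, where one subchain is trivial and one uses $\Delta(X)\subseteq U_{n+1}$), all of which go through without difficulty.
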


We also mention  the following result.

\begin{prop}
Let $P$ be a family of quasi-pseudometrics on a set $X$ and $\mathcal U_P$ the generated quasi-uniformity.
\begin{enumerate}\item[\rm 1.]  The family   $Q$  of all $(\mathcal U^{-1}_P\times\mathcal U_P,u)$-quasi-uniformly continuous quasi-pseudometrics on $X$ generates the same quasi-uniformity as $P$, that is, $\mathcal U_Q=\mathcal U_P$.
\item[\rm 2.] The following equality holds
\begin{equation*} \mathcal U_P^{-1}=\mathcal U_{\overline P}\,,\end{equation*}
where  $\overline P$ is the set of quasi-pseudometrics conjugate to those in $P$, i.e.  \begin{equation*} \overline P=\{\bar p : p\in P\}\,.\end{equation*}
\end{enumerate}    \end{prop}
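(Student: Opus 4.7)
The plan is to establish both equalities by a double-inclusion argument, using as the main technical tool the characterization from Proposition \ref{p.quc-qm}(1): a quasi-pseudometric $d$ is $(\mathcal U^{-1}\times\mathcal U,u)$-quasi-uniformly continuous if and only if $V_{d,\varepsilon}\in\mathcal U$ for every $\varepsilon>0$. Everything else reduces to straightforward subbasis manipulations together with the minimality property of $\mathcal U_P$ recorded in Proposition \ref{p.qu-qms}.

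For part (1), I would first observe that $P\subseteq Q$. Indeed, for each $d\in P$ the set $V_{d,\varepsilon}$ lies in the defining subbasis of $\mathcal U_P$, hence $V_{d,\varepsilon}\in\mathcal U_P$ for every $\varepsilon>0$, and Proposition \ref{p.quc-qm}(1) then says that $d$ is $(\mathcal U_P^{-1}\times\mathcal U_P,u)$-quasi-uniformly continuous, i.e.\ $d\in Q$. Since enlarging the generating family of quasi-pseudometrics enlarges the generating subbasis, the inclusion $P\subseteq Q$ immediately yields $\mathcal U_P\subseteq\mathcal U_Q$. For the reverse inclusion I would invoke the minimality statement of Proposition \ref{p.qu-qms} applied to $Q$: since $\mathcal U_Q$ is the \emph{smallest} quasi-uniformity on $X$ making every element of $Q$ $(\mathcal U^{-1}\times\mathcal U,u)$-quasi-uniformly continuous, and since $\mathcal U_P$ already has this property by the very definition of $Q$, one concludes $\mathcal U_Q\subseteq\mathcal U_P$.

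For part (2), the argument is a direct manipulation of subbases. The key identity is
\[
V_{d,\varepsilon}^{-1}=\{(x,y)\in X\times X : d(y,x)<\varepsilon\}=V_{\bar d,\varepsilon}\,,
\]
which shows that the subbasis $\{V_{d,\varepsilon}^{-1}:d\in P,\,\varepsilon>0\}$ of $\mathcal U_P^{-1}$ coincides with the subbasis $\{V_{\bar d,\varepsilon}:\bar d\in\overline P,\,\varepsilon>0\}$ of $\mathcal U_{\overline P}$. Combining this with $(U_1\cap\dots\cap U_n)^{-1}=U_1^{-1}\cap\dots\cap U_n^{-1}$ and the filter axioms (QU2), (QU3) for both $\mathcal U_P^{-1}$ and $\mathcal U_{\overline P}$ gives the equality $\mathcal U_P^{-1}=\mathcal U_{\overline P}$. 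I do not anticipate any real obstacle; both statements are essentially bookkeeping with subbases, and the only nontrivial ingredient, Proposition \ref{p.quc-qm}(1), is already at our disposal.
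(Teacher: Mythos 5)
Your proposal is correct and follows essentially the route the paper intends: the paper leaves part (1) to the reader (and your double inclusion via Proposition \ref{p.quc-qm}(1) together with the minimality in Proposition \ref{p.qu-qms} is the natural way to fill it in), while for part (2) the paper's entire justification is precisely your key identity $V_{d,\varepsilon}^{-1}=V_{\bar d,\varepsilon}$.
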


    The second assertion from the above proposition follows from the equality
    $$
    V^{-1}_{d,\varepsilon}=  V_{\bar d,\varepsilon}\,.$$

    The convergence of nets and the separation properties of the topology generated by a family of quasi-pseudometrics can be expressed in terms of these quasi-pseudometrics.
    \begin{prop}\label{p2.sep-P} Let $P$ be a family of quasi-pseudometrics on a set $X$ and $\mathcal U=\mathcal U_P$ the quasi-uniformity on $X$ it generates.
    \begin{enumerate}\item[\rm 1.] A net $(x_i:i\in I)$ in $X$ is $\tau(\mathcal U)$-convergent to $x\in X$ if and only if
    $$
    \forall d\in P,\quad d(x,x_i)\to 0\,.$$
    \item[\rm 2.] For every $d\in P$ the function $d(\cdot,y)$ is $\mathcal{U}$-usc and $\mathcal{U}^{-1}$-lsc for every $y\in X$, while
    $d(x,\cdot)$ is $\mathcal{U}$-lsc and $\mathcal{U}^{-1}$-usc for every $x\in X$.
  \item[\rm 3.] The topology $\tau(\mathcal U)$ is $T_0$ if and only if  for every  $x,y\in X$ with $x\ne y$   there exists  $d\in P$ such that  $d(x,y)>0$ or $d(y,x)>0$.
    \item[\rm 4.] The topology $\tau(\mathcal U)$ is $T_1$ if and only if   for every  $x,y\in X$ with $x\ne y$   there exists  $d\in P$ such that $d(x,y)>0$.
    \end{enumerate}\end{prop}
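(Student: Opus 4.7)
The plan is to derive all four assertions from the subbasis description $\mathcal B_P=\{V_{d,\varepsilon}:d\in P,\,\varepsilon>0\}$ of $\mathcal U_P$, combined with Proposition \ref{p.quc-qm}.

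For part~1, I would use that a neighborhood basis of $x\in X$ in $\tau(\mathcal U_P)$ is formed by the finite intersections $\bigcap_{k=1}^{n}V_{d_k,\varepsilon_k}(x)=\bigcap_{k=1}^{n}\{y\in X:d_k(x,y)<\varepsilon_k\}$, with $d_k\in P$ and $\varepsilon_k>0$. A net $(x_i:i\in I)$ therefore converges to $x$ in $\tau(\mathcal U_P)$ iff for every $d\in P$ and every $\varepsilon>0$ one has $d(x,x_i)<\varepsilon$ eventually, which is exactly $d(x,x_i)\to 0$ for each $d\in P$; the passage from finitely many subbasic conditions to a single one is immediate because finite intersections of tails of a directed set are again tails.

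For part~2, I would invoke Proposition \ref{p.quc-qm}: since $V_{d,\varepsilon}\in\mathcal U_P$ for every $d\in P$ and $\varepsilon>0$, its first part forces each $d\in P$ to be $(\mathcal U_P^{-1}\times\mathcal U_P,u)$-quasi-uniformly continuous, and its second part then supplies the semicontinuity of $d(x_0,\cdot)$ and of $d(\cdot,x_0)$ with respect to $\tau(\mathcal U_P)$. The $\mathcal U^{-1}$ counterparts follow by applying the same argument to the conjugate quasi-pseudometric $\bar d(x,y)=d(y,x)$, using the identity $\mathcal U_P^{-1}=\mathcal U_{\overline P}$ recorded in the preceding proposition, which interchanges the roles of the two arguments of $d$.

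For parts~3 and~4 I would translate the separation axioms through the same neighborhood basis. The topology $\tau(\mathcal U_P)$ is $T_1$ iff for every pair $x\neq y$ some basic neighborhood of $x$ excludes $y$; reducing to subbasic sets, this amounts to the existence of $d\in P$ and $\varepsilon>0$ with $d(x,y)\ge\varepsilon$, equivalently $d(x,y)>0$ for some $d\in P$. For $T_0$ the same reasoning applies, except that exclusion is needed in only one of the two directions, producing the disjunction $d(x,y)>0$ or $d(y,x)>0$. I do not anticipate a substantive obstacle; the only care required is bookkeeping with the subbasis, namely that a basic neighborhood is a \emph{finite} intersection of forward balls, while for each of the four assertions it suffices, thanks to that finiteness, to argue with a single pair $(d,\varepsilon)$ at a time.
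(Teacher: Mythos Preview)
Your proposal is correct. The paper itself only proves one piece of item~2 --- the $\mathcal U$-lower semicontinuity of $d(\cdot,y)$ --- and does so directly via part~1 and the triangle inequality rather than by citing Proposition~\ref{p.quc-qm}; your route through that proposition is an equivalent shortcut, and your sketches for parts~1,~3,~4 (which the paper leaves implicit) are fine. One caution: Proposition~\ref{p.quc-qm}.2 gives $d(\cdot,x_0)$ $\tau(\mathcal U)$-lsc and $d(x_0,\cdot)$ $\tau(\mathcal U)$-usc, which is the \emph{reverse} of how item~2 is worded here; the paper's own direct computation likewise yields $d(\cdot,y)$ $\mathcal U$-lsc, so the discrepancy lies in the statement rather than in either argument.
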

    \begin{proof} We present only the proof of the $\mathcal{U}$-lsc of the function $d(\cdot,y)$ which will be used later. Let $(x_i:i\in I)$ be a net in $X$ that is  $\mathcal{U}$-convergent to some $x\in X$. By assertion 1 this implies
    $$\lim_id(x,x_i)=0\,,$$
    for every $d\in P$.

     But then
    $$
    d(x,y)\le d(x,x_i)+d(x_i,y)\,,$$
    for all $i\in I$,  so that
    $$
    d(x,y)\le \liminf_id(x_i,y)\,.$$
    This shows that $d(\cdot,y)$ is $\mathcal{U}$-lsc.

    \end{proof}

     \begin{remark} Kelley \cite[p. 189]{Kel} calls the family of all $(\mathcal U\times\mathcal U)$-uniformly continuous pseudometrics on a uniform space $(X,\mathcal U)$  the ``gage"  of the uniform space $X$. Reilly \cite{reily73} calls a family of quasi-pseudometrics on a set $X$ a quasi-gauge and  a topological space $(X,\tau)$ whose topology is generated by a family $P$ of quasi-pseudometrics (i.e. $\tau=\tau(\mathcal U_P)$),  a quasi-gauge space.

      We shall follow   Reilly \cite{reily73} and call a family of quasi-pseudometrics on a set $X$  a \emph{quasi-gauge}.
        \end{remark}

    There are several notions of Cauchy net and  Cauchy filter in quasi-uniform spaces and the corresponding notions of completeness (see \cite{Cobzas} or \cite{kunzi09}), similar to those in quasi-metric spaces (see \cite{reily-subram82}).  All these notions agree with the usual notion of completeness in the case of uniform spaces.  Since the  results included in this paper will be concerned only with the sequential notions, we restrict the presentation to this case and to   sequential  right and left $K$-completeness.

    \begin{defi}
    Let $(X,\mathcal U)$ be a quasi-uniform space.
     \begin{itemize}\item
  a sequence $(x_n)$ in $X$  is  called \emph{left} $K$-\emph{Cauchy} (or \emph{left} $\mathcal U$-$K$-\emph{Cauchy}  if more precision is needed)  if for every $U\in \mathcal U$ there exists $n_U\in\mathbb{N}$ such that
 \begin{equation}\label{def1.lKC} \forall n\ge n_U,\; \forall k\in\mathbb{N},\quad (x_n,x_{n+k})\in U\,;\end{equation}
 \item the sequence $(x_n)$  is called  \emph{right} $K$-\emph{Cauchy} (or \emph{right} $\mathcal U$-$K$-\emph{Cauchy}) if it satisfies \eqref{def1.lKC} with  $(x_{n+k},x_n)$ instead of $(x_n,x_{n+k})$;
     \item one says that the quasi-uniform space $(X,\mathcal U)$ is   \emph{sequentially left} $K$-\emph{complete} (or \emph{sequentially left} $\mathcal U$-$K$-\emph{complete})   if every left $K$-Cauchy sequence in $X$ is $\tau(\mathcal U)$-convergent to some $x\in X$;
     \item the  notion of \emph{sequential right} $K$-\emph{completeness} is defined accordingly.
     \end{itemize}\end{defi}

     We have again characterizations in terms of the quasi-pseudometrics generating the quasi-uniformity.
\begin{prop}
  Suppose that  the quasi-uniformity $\mathcal U$ is generated by a family $P$ of quasi-pseudometrics. Then $(x_n)$ is left  $K$-Cauchy (right $K$-Cauchy) if and only if for every $d\in P$ and $\varepsilon>0$ there exists $n_0=n_{d,\varepsilon}\in \mathbb{N}$ such that
 \begin{equation*}
 \forall n\ge n_0,\; \forall k\in\mathbb{N},\quad d(x_n,x_{n+k})<\varepsilon\,,\end{equation*}
 or
 \begin{equation*}
 \forall n\ge n_0,\; \forall k\in\mathbb{N},\quad d(x_{n+k},x_{n})<\varepsilon\,,\end{equation*}
 respectively.
 \end{prop}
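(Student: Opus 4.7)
The plan is to unfold the definition of left (respectively right) $K$-Cauchy with respect to $\mathcal U=\mathcal U_P$ using the fact that $\{V_{d,\varepsilon}:d\in P,\varepsilon>0\}$ is a subbasis for $\mathcal U_P$. The argument is symmetric in the two cases, so I will write it for the left $K$-Cauchy property and merely remark that swapping the pair $(x_n,x_{n+k})$ for $(x_{n+k},x_n)$ gives the right version.

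For the forward implication, assume $(x_n)$ is left $\mathcal U$-$K$-Cauchy. Fix $d\in P$ and $\varepsilon>0$. By the definition of $\mathcal U_P$ the set $V_{d,\varepsilon}$ belongs to the subbasis, hence to $\mathcal U_P$. Apply left $K$-Cauchyness with $U:=V_{d,\varepsilon}$ to obtain $n_0\in\mathbb{N}$ such that $(x_n,x_{n+k})\in V_{d,\varepsilon}$ for all $n\ge n_0$ and $k\in\mathbb{N}$, which is exactly $d(x_n,x_{n+k})<\varepsilon$.

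For the converse, assume the quasi-pseudometric condition holds for every $d\in P$ and $\varepsilon>0$, and let $U\in\mathcal U_P$. By Proposition \ref{p1.base-qu} applied to the subbasis $\mathcal B_P$, there exist $d_1,\dots,d_m\in P$ and $\varepsilon_1,\dots,\varepsilon_m>0$ such that
\begin{equation*}
\bigcap_{i=1}^m V_{d_i,\varepsilon_i}\subseteq U.
\end{equation*}
For each $i\in\{1,\dots,m\}$ choose $n_i\in\mathbb{N}$ with $d_i(x_n,x_{n+k})<\varepsilon_i$ for all $n\ge n_i$ and $k\in\mathbb{N}$, and set $n_U:=\max\{n_1,\dots,n_m\}$. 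For every $n\ge n_U$ and $k\in\mathbb{N}$ the pair $(x_n,x_{n+k})$ lies in each $V_{d_i,\varepsilon_i}$, hence in $U$, so $(x_n)$ is left $K$-Cauchy.

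This argument involves no genuine obstacle; it is a routine subbasis-to-basis-to-filter translation, and the only point to be careful about is that one must intersect \emph{finitely many} entourages from the subbasis (which is harmless since left $K$-Cauchyness requires the tail property only beyond a single index, and finitely many such indices can be maximized). The right $K$-Cauchy case is identical after interchanging the roles of $x_n$ and $x_{n+k}$ throughout, using the fact that $V_{d,\varepsilon}^{-1}=V_{\bar d,\varepsilon}$ is irrelevant here because the subbasis elements themselves, not their inverses, are what appear in the defining condition.
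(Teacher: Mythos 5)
Your proof is correct: the paper states this proposition without proof, and your routine unfolding of the left/right $K$-Cauchy definition through the subbasis $\{V_{d,\varepsilon}: d\in P,\ \varepsilon>0\}$, with the finite-intersection step handled by maximizing finitely many indices, is exactly the argument the author leaves implicit. Nothing further is needed.
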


    \section{Fang's quasi-gauge spaces}\label{S.Fqg-sp}

    Fang \cite{fang96} considered a slightly more general notion of gauge, called $F$-gauge.  We extend it to the asymmetric case.
    \begin{defi}\label{def.Fqg} Let $X$ be a set. A family  $D$   of functions $d:X\times X\to \mathbb{R}$ is called an $F$-\emph{quasi-gauge} if it satisfies the following conditions:
    \begin{align*}
  {\rm (QF1)}& \quad \mbox{the set } D \mbox{ is directed upward};\\
{\rm (QF2)}& \quad \forall d\in D,\;\forall x,y\in X, \; d(x,y)\ge 0\;\wedge d(x,x)=0;\\
 {\rm (QF3)}& \quad \forall d\in D,\; \exists d'\ge d \mbox{ in } D \mbox{ s.t. }\; (\forall x,y,z\in X, \; d(x,y)\le d'(x,z)+d'(z,y))\,.
  \end{align*}

  If further
  $$\mbox{(QF4)}\quad \forall d\in D,\; \forall x,y\in X, \; d(x,y)=d(y,x)\,,$$
  then $D$ is called an $F$-\emph{gauge}.\end{defi}

  Fang \cite{fang96}  defined an $F$-space as a topological space $(X,\tau)$ whose topology is defined by an $F$-gauge in the sense that
  a neighborhood basis at every point $x\in X$ is formed by the sets $$U_d(x,\varepsilon):=\{y\in X: d(x,y)<\varepsilon\},\; d\in D,\, \varepsilon>0\,. $$

  Later Hamel and L\"ohne \cite{hamel-lohne03} have shown that, similarly to the case of gauges of pseudometrics,  an $F$-gauge generates a uniformity  $\mathcal U_D$ on $X$ having as basis the sets
 $$V_{d,\varepsilon}:=\{(x,y)\in X\times X: d(x,y)<\varepsilon\},\; d\in D,\, \varepsilon>0\,.$$

 Since $U_d(x,\varepsilon)=V_{d,\varepsilon}(x),$ the topological space   $(X,\tau(\mathcal U_D))$ is an $F$-space in Fang's sense.

 We show that an $F$-quasi-gauge generates a quasi-uniformity on $X$.
 \begin{prop}
  Let $X$ be a set and $D$ an $F$-quasi-gauge on $X$. Then the family of sets
 $$
 \mathcal B_D:=\{V_{d,\varepsilon}:d\in D,\,\varepsilon>0\}$$
 is a basis for a quasi-uniformity $\mathcal U_D$ on $X$ given by
 $$
 \mathcal U_D=\{U\subseteq X\times X: \exists d\in D,\,\exists\varepsilon>0,\; V_{d,\varepsilon}\subseteq U\}\,.$$

 The conjugate quasi-uniformity $\mathcal U^{-1}_D$  has as basis the family of sets
 $$
 \mathcal B_{\overline D}:=\{V_{\bar d,\varepsilon}:d\in D,\,\varepsilon>0\}\,,$$
 that is $\mathcal U^{-1}_D=\mathcal U_{\overline D}\,,$
 where
 $$
 \overline D=\{\bar d:d\in D\}\,.$$
 \end{prop}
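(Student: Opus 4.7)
The plan is to verify that $\mathcal B_D$ satisfies the three basis axioms (BQU1)--(BQU3) from Proposition \ref{p1.base-qu}, then read off the description of $\mathcal U_D$ and its conjugate directly from the definitions.

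For (BQU1), I would fix $d\in D$ and $\varepsilon>0$ and note that (QF2) gives $d(x,x)=0<\varepsilon$ for every $x\in X$, so $(x,x)\in V_{d,\varepsilon}$, i.e. $\Delta(X)\subseteq V_{d,\varepsilon}$. For (BQU3), given $V_{d_1,\varepsilon_1}$ and $V_{d_2,\varepsilon_2}$, I would invoke (QF1) to pick $d\in D$ with $d\ge d_1$ and $d\ge d_2$, and set $\varepsilon=\min\{\varepsilon_1,\varepsilon_2\}$. For any $(x,y)\in V_{d,\varepsilon}$ we have $d_i(x,y)\le d(x,y)<\varepsilon\le\varepsilon_i$ ($i=1,2$), whence $V_{d,\varepsilon}\subseteq V_{d_1,\varepsilon_1}\cap V_{d_2,\varepsilon_2}$.

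The only step that uses the full strength of the hypotheses, and hence the main thing to be careful about, is (BQU2). Given $V_{d,\varepsilon}\in\mathcal B_D$, I would apply (QF3) to produce $d'\in D$ with $d'\ge d$ and
\[
 d(x,y)\le d'(x,z)+d'(z,y)\qquad(x,y,z\in X),
\]
and then set $C:=V_{d',\varepsilon/2}\in\mathcal B_D$. If $(x,y)\in C\circ C$ then there is $z\in X$ with $d'(x,z)<\varepsilon/2$ and $d'(z,y)<\varepsilon/2$, so the inequality above gives $d(x,y)<\varepsilon$, i.e. $(x,y)\in V_{d,\varepsilon}$. Thus $C\circ C\subseteq V_{d,\varepsilon}$. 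Note that (QF3) is used here in a genuinely non-symmetric way: we do not need $d$ itself to be a quasi-pseudometric, only the ``dominated triangle inequality'' provided by $d'$, and this is exactly the feature which separates $F$-(quasi-)gauges from ordinary (quasi-)gauges.

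Having verified (BQU1)--(BQU3), Proposition \ref{p1.base-qu} yields that $\mathcal B_D$ is a basis for a unique quasi-uniformity, namely
\[
 \mathcal U_D=\{U\subseteq X\times X:\exists\,d\in D,\,\exists\,\varepsilon>0,\; V_{d,\varepsilon}\subseteq U\},
\]
as claimed. For the statement about the conjugate, I would observe that by the very definition of inversion and of $\bar d$,
\[
 V_{d,\varepsilon}^{-1}=\{(y,x)\in X\times X:d(x,y)<\varepsilon\}=\{(u,v)\in X\times X:\bar d(u,v)<\varepsilon\}=V_{\bar d,\varepsilon}.
\]
Since $\mathcal U_D^{-1}=\{U^{-1}:U\in\mathcal U_D\}$ has $\{V_{d,\varepsilon}^{-1}:d\in D,\,\varepsilon>0\}$ as a basis, this basis coincides with $\mathcal B_{\overline D}$, and the same argument as above (applied to $\overline D$, which is also an $F$-quasi-gauge because (QF1)--(QF3) are preserved under conjugation) shows that $\mathcal B_{\overline D}$ generates $\mathcal U_{\overline D}$. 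Hence $\mathcal U_D^{-1}=\mathcal U_{\overline D}$, completing the proof.
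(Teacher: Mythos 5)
Your proof is correct and follows essentially the same route as the paper: verify (BQU1)--(BQU3) for $\mathcal B_D$ via Proposition \ref{p1.base-qu}, using (QF2) for the diagonal, directedness (QF1) for intersections, (QF3) with $V_{d',\varepsilon/2}\circ V_{d',\varepsilon/2}\subseteq V_{d,\varepsilon}$ for the composition axiom, and the identity $V_{d,\varepsilon}^{-1}=V_{\bar d,\varepsilon}$ for the conjugate. You merely spell out details the paper leaves implicit (e.g.\ that $\overline D$ is itself an $F$-quasi-gauge), which is fine.
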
\begin{proof}
   We have to show that $\mathcal B_D$ satisfies the conditions (BQU1)--(BQU3) from Proposition \ref{p1.base-qu}. Since $d(x,x)=0$ it follows $\Delta(X)\subseteq V_{d,\varepsilon}.$ Also
   if $V_{d_1,\varepsilon_1}, V_{d_2,\varepsilon_2}\in\mathcal B_D,$ then $V_{d,\varepsilon}\subseteq V_{d_1,\varepsilon_1}\cap V_{d_2,\varepsilon_2}$, where $d\ge d_1,d_2$ ($D$ is directed) and $\varepsilon:=\min\{\varepsilon_1,\varepsilon_2\}.$

   For $V_{d,\varepsilon}\in\mathcal B_D$ let  $d'$ be given by (QF3). Then $V_{d',\varepsilon/2}\circ V_{d',\varepsilon/2}\subseteq V_{d,\varepsilon}.$

   The assertion concerning the conjugate quasi-uniformity follows from the equality
   $$
   V_{d,\varepsilon}^{-1}=V_{\bar d,\varepsilon}\,,$$
   valid for all $d\in D$ and $\varepsilon>0$.
 \end{proof}

 Similar to Proposition \ref{p2.sep-P}, the topological properties of a quasi-uniform space can be characterized in  terms of an $F$-quasi-gauge generating the quasi-uniformity.
 \begin{prop}\label{p2.top-Fqg} Let $D$ be an $F$-quasi-gauge on a set $X$ and $\mathcal U=\mathcal U_D$ the generated quasi-uniformity.
 \begin{enumerate}\item[\rm 1.] A net $(x_i:i\in I\}$ in $X$ is $\tau(\mathcal U)$-convergent to $x\in X$ if and only if
    $$
    \forall d\in D,\quad d(x,x_i)\to 0\,.$$
        \item[\rm 2.] Let $(x_i:i\in I)$ be a net in $X$ that is $\mathcal{U}$-convergent to some $x\in X$. Then  for every $y\in X$ and every  $d\in D$
       $$d(x,y)\le \liminf_id'(x_i,y)\,,$$
       where $d$ and $d'$ are related by (QF3).
        \item[\rm 3.] The topology $\tau(\mathcal U)$ is $T_0$ if and only if  for every  $x,y\in X$ with $x\ne y$   there exists  $d\in D$ such that  $d(x,y)>0$ or $d(y,x)>0$.
    \item[\rm 4.] The topology $\tau(\mathcal U)$ is $T_1$ if and only if   for every  $x,y\in X$ with $x\ne y$   there exists  $d\in D$ such that $d(x,y)>0$.
    \end{enumerate}\end{prop}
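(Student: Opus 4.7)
The plan is to prove the four parts in order, exploiting the fact that the preceding proposition has already established $\mathcal{B}_D=\{V_{d,\varepsilon}:d\in D,\,\varepsilon>0\}$ as a basis for $\mathcal{U}_D$, so $\{V_{d,\varepsilon}(x):d\in D,\,\varepsilon>0\}$ is a neighborhood basis of $x$ in $\tau(\mathcal{U})$. For part 1 I would simply unwind definitions: $x_i\to x$ means that each basic neighborhood $V_{d,\varepsilon}(x)=\{y:d(x,y)<\varepsilon\}$ eventually contains $x_i$, which is exactly $d(x,x_i)\to 0$ for every $d\in D$.

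Part 2 is the one step that is not immediate, because an element of $D$ need not itself satisfy a triangle inequality. Given $y\in X$ and $d\in D$, I would take $d'\in D$ as supplied by (QF3) and specialize that axiom with $z=x_i$ to obtain, for every $i$,
\begin{equation*}
d(x,y)\le d'(x,x_i)+d'(x_i,y).
\end{equation*}
Since $d'\in D$, part 1 applied to $d'$ gives $d'(x,x_i)\to 0$. Taking $\liminf_i$ on both sides and using the elementary fact that $\liminf_i(a_i+b_i)=a+\liminf_i b_i$ whenever $a_i\to a$, the left-hand side is constant while the first summand on the right-hand side vanishes in the limit, yielding $d(x,y)\le\liminf_i d'(x_i,y)$ as claimed.

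For parts 3 and 4 I would invoke Proposition 1.9: $\tau(\mathcal{U})$ is $T_1$ iff $\bigcap\mathcal{U}=\Delta(X)$, and $T_0$ iff $W\cap W^{-1}=\Delta(X)$ where $W=\bigcap\mathcal{U}$. For part 4, if $x\ne y$ then some $U\in\mathcal{U}$ excludes $(x,y)$; by the basis property some $V_{d,\varepsilon}\subseteq U$, so $(x,y)\notin V_{d,\varepsilon}$ forces $d(x,y)\ge\varepsilon>0$. Conversely, if $d(x,y)>0$ for some $d\in D$, then $(x,y)\notin V_{d,\,d(x,y)/2}\in\mathcal{U}$, so $(x,y)\notin\bigcap\mathcal{U}$. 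Part 3 is identical after splitting into the symmetric cases $(x,y)\notin W$ and $(y,x)\notin W$, the second of which corresponds via $V_{d,\varepsilon}^{-1}=V_{\bar d,\varepsilon}$ to a basis element of $\mathcal{U}^{-1}_D$. The only place where genuine care is needed is part 2, where the asymmetry of $D$ makes it essential to route the triangle step through the larger element $d'$ rather than $d$ itself; everything else is a routine translation between the neighborhood basis and the generating family.
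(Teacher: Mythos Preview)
Your proposal is correct and follows essentially the same approach as the paper: parts 1 and 2 are argued exactly as the paper does (unwinding the basis description for 1, and routing the triangle inequality through the dominating $d'$ from (QF3) for 2), while for parts 3 and 4 the paper works directly with the topological definition of $T_0$/$T_1$ in terms of the basic sets $V_{d,\varepsilon}$ rather than passing through the $\bigcap\mathcal U$ characterizations of Proposition~1.9, but this is only a cosmetic difference.
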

    \begin{proof} All these properties follow from the fact that the sets $V_{d,\varepsilon},\ d\in D,\, \varepsilon>0$, form a basis for the quasi-uniformity $\mathcal U_D$.

    1.\; This follows from the equivalences
    \begin{align*}
      x_i\xrightarrow{\tau(\mathcal U)}x&\iff \forall U\in\mathcal U,\;  \exists i_U\in I\; \mbox{ s.t. }\; (x,x_i)\in  U \;\mbox{for all }\; i\ge i_U\\
      &\iff \forall  d\in D,\forall \varepsilon>0,\,\exists i_0=i_0(d,\varepsilon)\in I \; \mbox{ s.t. }\; d(x,x_i)< \varepsilon \;\mbox{for all }\; i\ge i_0\\
      &\iff \forall d\in D,\;  \lim_id(x,x_i)=0 \,.
    \end{align*}

    2.\; This follows from  1 and  the inequality
    $$
    d(x,y)\le d'(x,x_i)+d'(x_i,y)\,$$
    (see the proof of the corresponding item in Proposition \ref{p2.sep-P}).

 3.\; In this case
    \begin{align*}
      \tau(\mathcal U)\;\mbox{ is }\; T_0&\iff \forall x,y\in X,\; (x\ne y\;\Rightarrow\; \exists V_{d,\varepsilon}\in\mathcal B_D.\; (x,y)\notin  V_{d,\varepsilon}\vee (y,x)\notin V_{d,\varepsilon})\\&\iff \forall x,y\in X,\; (x\ne y\;\Rightarrow\; \exists d\in D,\,\exists\varepsilon>0, \;  d(x,y)\ge \varepsilon \vee d(y,x)\ge\varepsilon)\\
      &\iff \forall x,y\in X,\; (x\ne y\;\Rightarrow\; \exists d\in D, \; d(x,y)>0 \vee d(y,x)>0)\,.
    \end{align*}

    The proof of 4 is similar to that of 3, replacing $\vee$ with $\wedge$.
    \end{proof}

    The Cauchy properties of  sequences in $(X,\mathcal U_D)$   can also be expressed in terms of the $F$-quasi-gauge $D$.
     \begin{prop}
     Let $D$ be an $F$-quasi-gauge on a set $X$ and $\mathcal U=\mathcal U_D$ the generated quasi-uniformity.

  A sequence   $(x_n)_{n\in\mathbb{N}}$ in $X$ is left $K$-Cauchy  (right $K$-Cauchy) if and only if
    for every $d\in D$ and $\varepsilon>0$ there exists $n_0=n_0(d,\varepsilon)\in \mathbb{N}$  such that for every $n\ge n_0 $  and every $k\in\mathbb{N}$,
    \begin{equation*}
    \quad  d(x_n,x_{n+k})<\varepsilon\,,\end{equation*}
respectively,
\begin{equation*}
\quad d(x_{n+k},x_n)<\varepsilon\,.\end{equation*}
\end{prop}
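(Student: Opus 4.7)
The plan is to reduce both statements to the earlier observation that $\mathcal{B}_D=\{V_{d,\varepsilon}:d\in D,\,\varepsilon>0\}$ is a basis for $\mathcal{U}_D$. Once this is in hand, the characterization is essentially a reformulation of the definition of left (resp.\ right) $K$-Cauchy, unpacked through a generic basis element rather than through a general $U\in\mathcal{U}_D$.

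For the forward direction of the left $K$-Cauchy equivalence, I would fix $d\in D$ and $\varepsilon>0$ and observe that $V_{d,\varepsilon}\in\mathcal{B}_D\subseteq\mathcal{U}_D$. Applying the defining property of a left $K$-Cauchy sequence to $U:=V_{d,\varepsilon}$ produces an integer $n_0=n_0(d,\varepsilon)$ such that $(x_n,x_{n+k})\in V_{d,\varepsilon}$, i.e.\ $d(x_n,x_{n+k})<\varepsilon$, for all $n\ge n_0$ and all $k\in\mathbb{N}$. For the converse, take an arbitrary $U\in\mathcal{U}_D$; since $\mathcal{B}_D$ is a basis, there are $d\in D$ and $\varepsilon>0$ with $V_{d,\varepsilon}\subseteq U$. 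The hypothesis furnishes $n_0$ such that $d(x_n,x_{n+k})<\varepsilon$ for all $n\ge n_0$ and $k\in\mathbb{N}$, which reads $(x_n,x_{n+k})\in V_{d,\varepsilon}\subseteq U$, so the left $K$-Cauchy condition \eqref{def1.lKC} holds with $n_U=n_0$.

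The right $K$-Cauchy case is obtained by the same argument applied mutatis mutandis, replacing the pair $(x_n,x_{n+k})$ by $(x_{n+k},x_n)$ everywhere. No symmetry, no triangle inequality, and no use of the axioms (QF1)--(QF3) beyond what is already encoded in the fact that $\mathcal{B}_D$ is a basis for $\mathcal{U}_D$ is needed.

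I do not anticipate any real obstacle: the statement is a direct translation of the filter-theoretic definition of $K$-Cauchy to the concrete basis $\mathcal{B}_D$, entirely parallel to the analogous result for quasi-gauges of quasi-pseudometrics stated earlier in the paper. The only point worth being explicit about is that the $n_0$ obtained in the forward direction is allowed to depend on $(d,\varepsilon)$, matching the wording of the proposition.
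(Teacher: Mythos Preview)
Your argument is correct and is exactly the natural one: reduce to the basis $\mathcal{B}_D=\{V_{d,\varepsilon}\}$ and unpack the definition of left (right) $K$-Cauchy through a generic basis element. The paper actually states this proposition without proof, so there is nothing to compare against; your write-up supplies precisely the routine verification the author left implicit.
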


\section{Ekeland variational principle in $F$-quasi-gauge spaces}\label{S.Ek-Fqg}

Fang \cite{fang96} introduced the class of $F$-gauge spaces and proved some results on Ekeland's variational principle and   its equivalents in these spaces.
Hamel and L\"ohne \cite{hamel-lohne03} have shown that an $F$-gauge space is a uniform space and  Hamel \cite{hamel01} proved that, similarly to the case of uniform spaces,  Fang's results can be obtained by an application of Brezis-Browder principle on ordered sets.
Following Hamel and L\"ohne's ideas (see \cite{hamel01},  \cite{hamel05}, \cite{hamel-lohne03}), we show  that  Brezis-Browder maximality principle can   be further used to extend  Fang's \cite{fang96}  results from $F$-gauge spaces   to $F$-quasi-gauge spaces (introduced in Section \ref{S.Fqg-sp}).

  We start with this principle.
If $\pq$ is a partial order on  set $Z$, then we
  use the notation $x\prec y$ to designate the situation $ x\pq y$
 and $x\neq y.$

 \begin{theo}[H. Br\'ezis and F. Browder, 1976,  \cite{brez-browd76}, Corollary 1]
  Let $(Z,\pq)$ be a partially ordered set and $\varphi:X\to\mathbb{R}$ a function on $X$. Suppose that
  the following conditions are satisfied:
  \begin{enumerate}
    \item[{\rm (a)}] the function $\varphi$ is  bounded above and strictly increasing, i.e., \;
  $x\prec y\;\Rightarrow \; \varphi(x) < \varphi(y) $;
 \item[{\rm (b)}] for any increasing sequence  $x_1 \pq x_2\pq \dots$  in $Z$  there
 exists $y\in Z$ such that  $x_n\pq y,\,$ for all $ n\in \mathbb{N}.$
  \end{enumerate}
 Then for each $x\in Z$ there exists    a   maximal element $\overline x$ in $Z$ such that $x\pq \overline x.$ %
 \end{theo}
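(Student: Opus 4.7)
The plan is to fix $x \in Z$ and construct, by recursion, an increasing sequence $x_1 \pq x_2 \pq \dots$ starting at $x_1 := x$ whose image under $\varphi$ essentially exhausts the supremum of $\varphi$ on each upper section. Boundedness of $\varphi$ in hypothesis (a) makes this possible: for each $n$, set
\[
s_n := \sup\{\varphi(z) : z \in Z,\; x_n \pq z\} \in \mathbb{R},
\]
and pick $x_{n+1} \succeq x_n$ with $\varphi(x_{n+1}) > s_n - 1/n$. Since $\{z : x_{n+1} \pq z\} \subseteq \{z : x_n \pq z\}$, the sequence $(s_n)$ is nonincreasing, and the choice forces $s_n - \varphi(x_{n+1}) \le 1/n$.

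Next I would invoke hypothesis (b) to produce $\bar x \in Z$ with $x_n \pq \bar x$ for every $n$. Because $\bar x$ lies in each section $\{z : x_n \pq z\}$, we have $\varphi(\bar x) \le s_n$ for all $n$; on the other hand $\varphi(\bar x) \ge \varphi(x_{n+1}) > s_n - 1/n$ (using monotonicity of $\varphi$ along the sequence, which follows from (a)). Letting $n \to \infty$ then yields $s_n \to \varphi(\bar x)$.

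Finally I would verify maximality of $\bar x$ by contradiction: assume there is $w \in Z$ with $\bar x \prec w$. Transitivity gives $x_n \pq w$ for every $n$, so $\varphi(w) \le s_n$ for every $n$, whence $\varphi(w) \le \varphi(\bar x)$. But strict monotonicity in (a) applied to $\bar x \prec w$ gives $\varphi(\bar x) < \varphi(w)$, a contradiction. Hence $\bar x$ is maximal, and it dominates the prescribed $x$ by construction.

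The main obstacle I anticipate is the bookkeeping around the two-sided control of $\varphi(x_{n+1})$ relative to $s_n$: one has to be careful to select $x_{n+1}$ strictly above $s_n - 1/n$ in the $\varphi$-value while remaining in the section above $x_n$, and then to reconcile the possibly non-strict relation $x_n \pq \bar x$ (which need not be strict, so one cannot directly use (a) on that pair) with the need to force $\varphi(\bar x) \ge s_n - 1/n$. The clean way around this is to pass through $x_{n+1}$ rather than $x_n$ when pushing the lower bound to $\bar x$, which is why the recursion is indexed so that $x_{n+1}$ both exceeds $s_n - 1/n$ and satisfies $x_{n+1} \pq \bar x$.
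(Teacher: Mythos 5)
The paper does not actually prove this statement: it is imported verbatim from Br\'ezis and Browder (1976, Corollary 1) and used as a black box, so there is no in-paper argument to compare yours against. Judged on its own, your proof is correct and is essentially the classical supremum-chasing argument from the original paper. The recursion $\varphi(x_{n+1})>s_n-1/n$ with $x_n\pq x_{n+1}$ is well defined (each section $\{z: x_n\pq z\}$ contains $x_n$, and $s_n$ is finite by boundedness above), hypothesis (b) supplies the upper bound $\bar x$, the squeeze $s_n-1/n<\varphi(x_{n+1})\le\varphi(\bar x)\le s_n$ gives $s_n\to\varphi(\bar x)$, and the contradiction with strict monotonicity on a hypothetical $w\succ\bar x$ is exactly right. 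Your concern about the non-strictness of $x_{n+1}\pq\bar x$ is resolved correctly: strict monotonicity in (a) yields weak monotonicity for $\pq$ (split into the cases $x_{n+1}=\bar x$ and $x_{n+1}\prec\bar x$), which is all that is needed there. The only point worth flagging is that the recursive selection of the $x_{n+1}$ uses the axiom of dependent choice; this is consistent with the paper, whose remark following the theorem explicitly notes that this principle rests on (DC).
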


 Reversing the order, i.e. considering the order
 $$
 x\le y\iff y\pq x,\; x,y\in Z\,,$$
 one obtains conditions for the existence  of minimal elements.
 \begin{theo}[Brezis-Browder  minimality principle]\label{t.BB-Fqg}
  Let $(Z,\le)$ be a partially ordered set and $\varphi:X\to\mathbb{R}$ a function on $X$. Suppose that
  the following conditions are satisfied:
  \begin{enumerate}
    \item[{\rm (a)}] the function $\varphi$ is  bounded below  and  strictly decreasing, i.e., \;
  $x\le y $ and $x\ne y \;\Rightarrow \; \varphi(x) > \varphi(y) $;
 \item[{\rm (b)}] for any decreasing sequence  $x_1\ge x_2\ge \dots $  in $Z$  there
 exists $y\in Z$ such that  $y\le x_n,\,$ for all \,$ n\in \mathbb{N}.$
  \end{enumerate}
 Then for each $x\in Z$ there exists    a   minimal element $\underline x$ in $Z$ such that $\underline x\le   x.$
 \end{theo}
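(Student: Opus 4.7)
The plan is to derive this as an immediate corollary of the Brezis-Browder maximality principle via order reversal. I would consider the reverse partial order $\pq$ on $Z$, defined by $a\pq b \iff b\le a$, which converts $\le$-minimal elements into $\pq$-maximal ones, $\le$-decreasing sequences into $\pq$-increasing ones, and $\le$-lower bounds into $\pq$-upper bounds.

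I would then check that $\varphi$ satisfies the hypotheses of the maximality theorem on $(Z,\pq)$. For the order-compatibility requirement, suppose $a\pq b$ with $a\ne b$; this means $b\le a$ and $b\ne a$, so by the strict decrease of $\varphi$ in $\le$, $\varphi(b)>\varphi(a)$, i.e., $\varphi(a)<\varphi(b)$, which is exactly the strict increase of $\varphi$ on $(Z,\pq)$. For the upper-bound condition, any $\pq$-increasing sequence $x_1\pq x_2\pq\cdots$ is a $\le$-decreasing sequence, to which hypothesis (b) of the minimality principle applies to yield $y\in Z$ with $y\le x_n$ for all $n\in\mathbb{N}$, equivalently $x_n\pq y$ for all $n$, so $y$ is the required $\pq$-upper bound.

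Applying the maximality principle on $(Z,\pq)$ to a given $x\in Z$ produces an element $\underline x$ with $x\pq \underline x$ and $\underline x$ maximal in $\pq$; this is precisely a $\le$-minimal $\underline x$ with $\underline x\le x$, as required. The proof is a formal order reversal with no substantive obstacles; the only point requiring attention is the consistent translation of strict decrease in $\le$ into strict increase in $\pq$, which the paragraph preceding the theorem already signals.
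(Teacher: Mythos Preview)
Your approach---reversing the order and applying the maximality principle---is exactly what the paper does; the paper's entire argument is the single sentence preceding the theorem, so your elaboration is faithful to it.

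There is one small gap worth flagging. You say you ``check that $\varphi$ satisfies the hypotheses of the maximality theorem on $(Z,\pq)$'' and then verify strict $\pq$-increase and the sequence condition, but you never address the remaining hypothesis: the maximality principle requires $\varphi$ to be \emph{bounded above}, whereas the minimality statement only gives you $\varphi$ bounded \emph{below}. Boundedness of $\varphi$ is a condition on its range in $\mathbb{R}$ and is unaffected by reversing the order on $Z$, so this does not come for free from the reversal. The fix is easy---replace $\varphi$ by $\arctan\circ\varphi$, which is strictly $\pq$-increasing (as the composition of strictly increasing functions) and bounded above, so the maximality principle applies and yields the same $\pq$-maximal (hence $\le$-minimal) elements. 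This also shows, incidentally, that the boundedness hypothesis is redundant in both versions of the principle. The paper glosses over this point as well, so your write-up is no less complete than the original; but since you explicitly claim to verify all the hypotheses, the omission is visible.
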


 \begin{remark} A slightly more general form of Brezis-Browder maximality principle was obtained in \cite{carja-urs93} (see also \cite[Theorem 2.1.1]{CNV}). Let $(Z,\le)$ be a preordered set and $\varphi:Z\to \mathbb{R}\cup\{\infty\}$ a function defined on $Z$. An element $z\in Z$ is called $\varphi$-maximal if $z\le x$ implies $\varphi(x)=\varphi(z)$, for every  $x\in Z$.
Suppose that
 \begin{itemize}
   \item[\rm (i)] every increasing sequence in $Z$ is bounded  from above;
   \item[\rm(ii)] the function $\varphi$ is increasing.
 \end{itemize}

 Then for every $x\in Z$ there exists a $\varphi$-maximal element $z\in Z$ such that $x\le z$.

 The proof is based on an axiom, weaker than the axiom of choice, called the Axiom of Dependent Choice:\smallskip

 (DC) \quad If $\mathcal{R}$  is a relation on a set  $Y$ such that $\{y\in Y: x\mathcal{R} y\}\ne\emptyset$ for every $x\in Y$, then for every $y\in Y$ there exists a sequence
$ (x_n)_{n=0}^\infty$ such that $x_0=y$ and  $x_n\mathcal{R}x_{n+1} $ for all $n\in\mathbb{N}_0$.\smallskip

Turinici \cite{turin11} proved that this form of the Brezis-Browder maximality principle is equivalent to the following version of the Ekeland variational principle:
\smallskip

Let $(Z,\le)$ be  a preordered set and let $\varphi:Z\to [0,\infty]$ be a function on $Z$. Suppose that the conditions (i) and (ii) from above are satisfied and that\smallskip

(iii)\; for every $x\in Z$ and every $\varepsilon >0$ there exists $y\in Z$ such that $x\le y$ and $\varphi(y)\le\varepsilon$.\smallskip

Then for every $x\in Z$ there exists $z\in Z$ such that $x\le z$ and $\varphi(z)=0$  (it follows that $z$ is $\varphi$-maximal).\smallskip

As it is shown in \cite{turin11}, in order to obtain the initial version proved by Ekeland some further conditions have to be imposed. That paper also contains a thorough analysis of this circle of ideas, including the equivalence of these principles to
(DC).
 \end{remark}

 We shall apply Brezis-Browder principle to an order given by a function on an $F$-quasi-gauge space.

\begin{prop}\label{p1.ord-Fqg} Let $D$ be an $F$-quasi-gauge on a set $X$, $\mathcal U=\mathcal U_D$ the generated quasi-uniformity on $X$ and let $\varphi:X\to\mathbb{R}$  be a function on $X$.  The  relation $\le_\varphi$ on $X$ defined  by
\begin{equation}\label{def1.ord-Fqg}
x\le_\varphi y\iff \forall d\in D,\; \varphi(y)+d(y,x)\le \varphi(x)\,,\end{equation}
 for all $x,y\in X$, satisfies the conditions:
 \begin{itemize}\item[\rm (a)]
 the relation $\le_\varphi$ given by \eqref{def1.ord-Fqg} is a preorder on $X$  and $\varphi$ is  decreasing with respect to $\le_\varphi\,$; if the topology $\tau(\mathcal U)$ is $T_0$, then $\le_\varphi$ is an order;
 \item[\rm (b)]   if the topology $\tau(\mathcal U)$ is $T_1$, then $\varphi$ is strictly decreasing with respect to $\le_\varphi$, i.e.
 $$
 (x\le_\varphi y\;\wedge\; x\ne y)\;\Rightarrow\; \varphi(x)>\varphi(y)\,.$$
\end{itemize}
 \end{prop}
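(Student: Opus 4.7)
The plan is to verify the preorder axioms for $\le_\varphi$ directly from the defining conditions (QF1)--(QF3) of an $F$-quasi-gauge, to obtain the decrease of $\varphi$ from the nonnegativity in (QF2), and then to exploit the separation characterizations in the previous proposition (items 3 and 4) to upgrade preorder to order under $T_0$ and to strict decrease under $T_1$. Reflexivity is immediate from (QF2), since $d(x,x)=0$ gives $\varphi(x)+d(x,x)=\varphi(x)$ for every $d\in D$.

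The main technical step is transitivity, which is where the triangle-like axiom (QF3) enters. Given $x\le_\varphi y$ and $y\le_\varphi z$, I would fix an arbitrary $d\in D$, invoke (QF3) to produce $d'\in D$ with $d'\ge d$ and
$$d(z,x)\le d'(z,y)+d'(y,x),$$
and then apply the two hypotheses with the stronger $d'$ to obtain
$$\varphi(z)+d'(z,y)\le\varphi(y)\quad\text{and}\quad \varphi(y)+d'(y,x)\le\varphi(x).$$
Chaining these gives $\varphi(z)+d(z,x)\le\varphi(z)+d'(z,y)+d'(y,x)\le\varphi(x)$, which establishes $x\le_\varphi z$. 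The decrease of $\varphi$ then follows from any single $d\in D$: if $x\le_\varphi y$, then $\varphi(y)\le\varphi(y)+d(y,x)\le\varphi(x)$ since $d(y,x)\ge 0$ by (QF2).

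For the order property under $T_0$, I would take $x\le_\varphi y$ and $y\le_\varphi x$, add the two inequalities (for an arbitrary $d\in D$) to get $d(x,y)+d(y,x)\le 0$, which forces $d(x,y)=d(y,x)=0$ for every $d\in D$; by item 3 of Proposition \ref{p2.top-Fqg} this is possible only when $x=y$. For the strict decrease under $T_1$, given $x\le_\varphi y$ with $x\ne y$, item 4 of Proposition \ref{p2.top-Fqg} (applied with the pair $(y,x)$ in place of $(x,y)$) supplies some $d\in D$ with $d(y,x)>0$; then $\varphi(y)+d(y,x)\le\varphi(x)$ forces $\varphi(y)<\varphi(x)$. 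The only genuinely subtle point is the transitivity argument, where one must not try to use the triangle inequality for $d$ itself (which need not hold) but instead pass to the majorant $d'$ provided by (QF3) and then fall back to $d$ at the end.
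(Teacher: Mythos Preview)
Your proof is correct and follows essentially the same approach as the paper's: the monotonicity of $\varphi$, the antisymmetry under $T_0$ via $d(x,y)+d(y,x)\le 0$, and the strict decrease under $T_1$ via the existence of some $d$ with $d(y,x)>0$ are all argued in the same way. Your treatment of transitivity is actually more explicit than the paper's (which simply declares the preorder property ``immediate''); you are right that the genuine triangle inequality is only available for the majorant $d'$ supplied by (QF3), and your chaining argument handles this correctly.
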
\begin{proof}
It is immediate that the relation $\le_\varphi$ given by \eqref{def1.ord-Fqg} is a preorder, i.e. a reflexive and transitive relation on $X$.
Since $x\le_\varphi y$ is equivalent to
$$
0\le d(y,x)\le\varphi(y)-\varphi(x)\,,$$
it follows that $\varphi$ is decreasing with respect to $\le_\varphi$.

 Suppose that $x\le_\varphi y$ and $y\le_\varphi x$. Then
 $$
 d(y,x)\le \varphi(x)-\varphi(y)\quad\mbox{and}\quad d(x,y)\le \varphi(y)-\varphi(x)\,,$$
 which by addition yield $ 0\le d(x,y)+d(y,x)\le 0$, for all $d\in D$. It follows
 \begin{equation}\label{eq2.ord-Fqg} d(x,y)=0=d(y,x)\,,\end{equation}
 for all $d\in D.$

Assertion 2 of Proposition \ref{p2.top-Fqg} can be rewritten in the reversed form: the topology $\tau(\mathcal U_D)$ is $T_0$ if and only if
$$
\forall d\in D,\; (d(x,y)=0\;\wedge\; d(y,x)=0)\;\Rightarrow\; x=y\,.$$

Consequently, \eqref{eq2.ord-Fqg} implies $x=y$ and so (a) holds.

(b)\; Let  $x\ne y$ in $X$ with  $\, x\le_\varphi y$. If $\tau(\mathcal U_P)$ is $T_1$, then, by Proposition \ref{p2.top-Fqg}, item 3, there exists $d_0\in D$ such that $d_0(y,x)>0,$ implying
$$
0<d_0(y,x)\le\varphi(x)-\varphi(y)\,,$$
that is, $\varphi(x)<\varphi(y).$
 \end{proof}

 A typical situation when the Brezis-Browder principle applies is
 contained in the following proposition.

  \begin{prop}\label{c.BB-Fqg} Let $(X,\mathcal U)$ be a quasi-uniform space whose quasi-uniformity is generated by an $F$-quasi-gauge $D$, i.e. $\mathcal U=\mathcal U_D$.

For a function $\varphi:X\to\mathbb{R}$  let  the relation $\le_\varphi$  be given by \eqref{def1.ord-Fqg}.
   Suppose that   the quasi-uniform space $(X,\mathcal U)$ is sequentially right  $K$-complete, the topology $\tau(\mathcal U)$ is $T_1$ and that the function $\varphi$ is bounded below. If the set
 $$S_-(x):=\{y\in X: y\le_\varphi x\}\,,$$
 is sequentially $\mathcal{U}$-closed for every $x\in X$,
then  every element of $X$ is minorized by a minimal element in $(X,\le_\varphi)$.
   \end{prop}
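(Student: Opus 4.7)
The plan is to apply the Brezis--Browder minimality principle (Theorem~\ref{t.BB-Fqg}) to the ordered set $(X,\le_\varphi)$ together with the function $\varphi$. By Proposition~\ref{p1.ord-Fqg}, the $T_1$ hypothesis on $\tau(\mathcal U)$ makes $\le_\varphi$ a genuine order on $X$ under which $\varphi$ is strictly decreasing, so together with the boundedness of $\varphi$ from below, condition~(a) of Theorem~\ref{t.BB-Fqg} is satisfied. The substantive work is the verification of condition~(b): every decreasing sequence in $(X,\le_\varphi)$ must admit a lower bound in $X$.

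Let $(x_n)$ be such a decreasing sequence, so that $x_{n+1}\le_\varphi x_n$ for every $n$. The defining inequality \eqref{def1.ord-Fqg} of $\le_\varphi$ yields, for every $d\in D$,
\begin{equation*}
\varphi(x_n)+d(x_n,x_{n+1})\le\varphi(x_{n+1}),
\end{equation*}
and hence $(\varphi(x_n))$ is monotone and, by the boundedness assumption, converges to some limit $\ell\in\mathbb R$. To turn this step-by-step control into an iterated bound I would apply axiom~(QF3) repeatedly: for each fixed $d\in D$ construct a finite ascending chain $d\le d_1\le d_2\le\cdots$ in the directed family $D$ with $d_{k-1}(u,w)\le d_k(u,v)+d_k(v,w)$, and then telescope, using the crucial fact that the order inequality displayed above holds simultaneously for \emph{every} element of $D$. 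The resulting estimate
\begin{equation*}
d(x_n,x_m)\le\varphi(x_m)-\varphi(x_n)
\end{equation*}
tends to $0$ as $n,m\to\infty$, so $(x_n)$ is $K$-Cauchy in the sense that matches the sequential right $K$-completeness hypothesis, and $(x_n)$ therefore $\tau(\mathcal U)$-converges to some $x^*\in X$.

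It remains to check that $x^*$ is a lower bound of the chain. For each fixed $n$, the tail $(x_m)_{m\ge n}$ lies in $S_-(x_n)$ by the decreasing property of the chain; the sequential $\mathcal U$-closedness hypothesis on $S_-(x_n)$ then transfers this membership to the $\tau(\mathcal U)$-limit, giving $x^*\in S_-(x_n)$, that is $x^*\le_\varphi x_n$ for every $n$. Condition~(b) is thus verified, and Brezis--Browder yields, for every $x\in X$, a minimal element $\underline x\in X$ with $\underline x\le_\varphi x$.

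The only genuinely delicate step is the iterated triangle bound: axiom~(QF3) does not allow telescoping with a single quasi-pseudometric, so one must climb the cofinal chain in $D$ at each iteration and make use of the fact that the defining inequality of $\le_\varphi$ holds uniformly in $d\in D$; this is the feature specific to the $F$-quasi-gauge setting that replaces the single triangle inequality available in the metric case.
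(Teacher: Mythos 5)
Your overall plan is the paper's plan: check condition (a) of Theorem~\ref{t.BB-Fqg} via Proposition~\ref{p1.ord-Fqg} and the boundedness of $\varphi$, check (b) by showing a $\le_\varphi$-decreasing sequence is $K$-Cauchy, invoke completeness, and use the sequential closedness of the tails' containing sets $S_-(x_n)$ to see that the limit minorizes the chain; that last step is carried out exactly as in the paper. But the step you flag as ``the only genuinely delicate'' one is in fact unnecessary: Proposition~\ref{p1.ord-Fqg} already records that $\le_\varphi$ is transitive, so $x_{n+k}\le_\varphi x_n$ for all $n,k$, and the \emph{definition} of $\le_\varphi$ then bounds $d(x_{n+k},x_n)$ by $\varphi(x_n)-\varphi(x_{n+k})$ simultaneously for every $d\in D$. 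No cofinal chain $d\le d_1\le d_2\le\cdots$ in $D$ and no telescoping along $x_n,\dots,x_{n+k}$ is needed; (QF3) has already been spent once in proving transitivity. Your telescoping does reproduce the same bound, but it is a detour, and presenting it as the crux misplaces where the actual content of the proof lies.

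The genuine gap is the orientation of your inequalities. From $\varphi(x_n)+d(x_n,x_{n+1})\le\varphi(x_{n+1})$ the sequence $(\varphi(x_n))$ is \emph{increasing}, so ``$\varphi$ bounded below'' yields no convergence, and the estimate $d(x_n,x_m)\le\varphi(x_m)-\varphi(x_n)$ (earlier index in the first slot) is the \emph{left} $K$-Cauchy condition, while the hypothesis is sequential \emph{right} $K$-completeness. The sentence asserting that your Cauchy condition ``matches'' the completeness hypothesis is exactly the point that fails as written. The orientation under which the proposition works --- and the one consistent with $S_-(x)=\{y\in X: y\le_\varphi x\}$ being the set $S_\varphi(x)=\{y: \varphi(y)+d(y,x)\le\varphi(x)\ \forall d\in D\}$ used throughout the paper --- is that $x_{n+1}\le_\varphi x_n$ means $\varphi(x_{n+1})+d(x_{n+1},x_n)\le\varphi(x_n)$ for all $d$. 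Then $(\varphi(x_n))$ is decreasing and bounded below, hence convergent, and $d(x_{n+k},x_n)\le\varphi(x_n)-\varphi(x_{n+k})$ is precisely the right $K$-Cauchy estimate. The display \eqref{def1.ord-Fqg} is ambiguous on this point, but a complete proof must resolve the ambiguity in the direction that lets ``bounded below'' and ``right $K$-complete'' actually do their work, rather than assert the match.
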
\begin{proof}
   1.\; We intend to apply Theorem \ref{t.BB-Fqg},  so we have to check the conditions (a), (b) from this theorem.

   By Proposition \ref{p1.ord-Fqg}, $\le_\varphi$ is an order on $X$ and $\varphi$ is strictly decreasing with respect to $\le_\varphi$, so that (a) holds.

   To prove (b), suppose that  $x_1\ge_\varphi x_2\ge_\varphi\dots$ is a decreasing sequence in $X$. Then $\alpha\le\varphi(x_{n+1})\le\varphi(x_{n}),\, n\in\mathbb{N},$ where $\alpha:=\inf\varphi(X)$.
   It follows that the sequence $(\varphi(x_n))_{n\in\mathbb{N}}$ is convergent, and so Cauchy.

   Since   $x_{n+k}\le_\varphi x_n$, it follows that, for every $d\in D$,
\begin{equation*}
   d(x_{n+k},x_n)\le \varphi(x_{n})-\varphi(x_{n+k})\,,
   \end{equation*}
  for all $n,k\in\mathbb{N}$,  which implies that $(x_n)$ is right $\mathcal{U}$-$K$-Cauchy,
 so that,  by the completeness hypothesis  on $X$, there exists  $y\in X$ such that
   $x_n\xrightarrow{\tau(\mathcal U)}y$.

For every $n\in\mathbb{N}$, the inequality $x_{n+k}\le_\varphi x_n$ implies $x_{n+k}\in S_-(x_n)$ for all $k\in\mathbb{N}$. By the $\mathcal{U}$-closedness of the
set $S_-(x_n)$, \, $y\in S_-(x_n)$, that is $y\le_\varphi x_n$, for all $n\in\mathbb{N}$.
\end{proof}

     \begin{remark}\label{re.lsc-clos}
       If the function $\varphi$ is sequentially $ \mathcal{U}$-lsc, then the set $S_-(x)$ is sequentially $\tau(\mathcal{U})$-closed for every $x\in X$.
            \end{remark}

Indeed, let $(y_n)$ be a sequence in $S_-(x)$ that is  $ \mathcal{U}$-convergent to some $y\in X$. Then,  for every $\delta\in D$,
\begin{equation}\label{eq0.lsc-Fqg}\begin{aligned}
  {\rm (i)}&\;\;\lim_{n\to\infty}\delta(y,y_n)=0\;\mbox{ and}\\
  {\rm (ii)}&\;\;f(y_n)+\delta(y_n,x)\le f(x)\;\mbox{ for all  }\; n\in\mathbb{N}.
\end{aligned}\end{equation}

Given $d \in D$  choose $d'\in D$ according to condition (QF3) from Definition \ref{def.Fqg}.  By Proposition \ref{p2.top-Fqg}, item 2, the $\mathcal{U}$-lsc of the function $f$ and the inequality (ii) from \eqref{eq0.lsc-Fqg},
$$
f(y)+d(y,x)\le\liminf_n[f(y_n)+d'(y_n,x)]\le f(x)\,.$$

This shows that $y\in S_-(x)$.

Let $D$ be an $F$-quasi-gauge on  a set $X$.
For a function $\varphi:X\to\mathbb{R}\cup\{\infty\}$ and $x\in X$ consider the   set
\begin{equation}\label{def.S(x)}
S_\varphi(x)=\{y\in X:\varphi(y)+d(y,x)\le\varphi(x)\, \mbox{ for all }\, d\in D\}\,.\end{equation}

      Appealing to Proposition  \ref{c.BB-Fqg} one can prove the following variant of Ekeland Variational Principle in $F$-quasi-gauge spaces.

\begin{theo}\label{t1.EkVP-Fqg} Let $(X,\mathcal U)$ be a quasi-uniform space whose quasi-uniformity is generated by an $F$-quasi-gauge $D$, (i.e. $\mathcal U=\mathcal U_D$)
 and $f:X\to \mathbb{R}\cup\{\infty\}$.
Suppose that  $(X,\mathcal U)$ is   sequentially right $K$-complete, the topology $\tau(\mathcal U)$ is $T_1$  and $f  $ is  proper, bounded below and
\begin{itemize}\item[\rm(a)] the set $S_f(x)$
is sequentially $\mathcal U$-closed for every $x\in X$.\end{itemize}

 Then, for every  $x_0\in \dom f$   there exists $z\in X$ such that
\begin{align*}
  &{\rm (i)}\;\; \forall d\in D,\quad f(z)+ d(z,x_0)\le f(x_0);\\
      &{\rm (ii)}\;\; \forall x\in X\setminus \{z\},\; \exists  d_x\in D,\quad f(z)<f(x)+ d_x(x,z).
\end{align*}\end{theo}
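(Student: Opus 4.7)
The plan is to apply the Brezis--Browder maximality principle (the upward version stated at the beginning of Section \ref{S.Ek-Fqg}) to $X$ equipped with the preorder from Proposition \ref{p1.ord-Fqg},
\begin{equation*}
x\le_f y\iff \forall d\in D,\ f(y)+d(y,x)\le f(x),
\end{equation*}
and to the bounded-above function $\varphi := -f$. Observe that $S_f(x)=\{y\in X:x\le_f y\}$, so the desired conclusions (i) and (ii) are exactly the relation $x_0 \le_f z$ together with the maximality of $z$ in $(X,\le_f)$.

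Since $\tau(\mathcal U)$ is $T_1$, Proposition \ref{p1.ord-Fqg} says that $\le_f$ is a genuine order on $X$ and that $f$ is strictly decreasing with respect to it; hence $\varphi=-f$ is bounded above and strictly increasing, which handles the first hypothesis of Brezis--Browder. I will restrict attention to the nonempty set $Y:=S_f(x_0)$, which is contained in $\dom f$ (indeed, $y\in S_f(x_0)$ forces $f(y)\le f(x_0)<\infty$); by transitivity any maximal element of $(Y,\le_f)$ is automatically maximal in $(X,\le_f)$, since $x_0\le_f z\le_f x$ would place $x$ back in $Y$.

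The crux is the chain hypothesis: every increasing sequence in $(Y,\le_f)$ has an upper bound. Given $x_0\le_f y_1\le_f y_2\le_f\cdots$, the real sequence $(f(y_n))$ is nonincreasing and bounded below, hence Cauchy. Unpacking $y_n\le_f y_{n+k}$ yields
\begin{equation*}
d(y_{n+k},y_n)\le f(y_n)-f(y_{n+k})\qquad\text{for every }d\in D,\ n,k\in\mathbb{N},
\end{equation*}
so $(y_n)$ is right $\mathcal U$-$K$-Cauchy by the quasi-pseudometric characterization recorded in Section \ref{S.qu-sp}. Sequential right $K$-completeness then produces $y\in X$ with $y_n\xrightarrow{\tau(\mathcal U)}y$. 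For each fixed $n$, the tail $(y_{n+k})_{k\ge 0}$ lies in $S_f(y_n)$, which is sequentially $\mathcal U$-closed by the theorem's hypothesis (a); hence $y\in S_f(y_n)$, that is, $y_n\le_f y$ for every $n$. Thus $y$ is the required upper bound.

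With both Brezis--Browder hypotheses verified, we obtain a maximal element $z\in Y$ with $x_0\le_f z$. The relation $x_0\le_f z$ is precisely conclusion (i). For (ii), fix $x\ne z$: maximality of $z$ forbids $z\le_f x$, so by the very definition of $\le_f$ there must exist $d_x\in D$ with $f(x)+d_x(x,z)>f(z)$, as required. The main obstacle is the Cauchy-chain step above: the asymmetry of $d$ dictates the \emph{right} (rather than left) $K$-Cauchy direction, and the passage from the limit $y$ back to the order relation $y_n\le_f y$ relies crucially on the sequential $\mathcal U$-closedness of the upper sections $S_f(y_n)$ --- which is exactly why hypothesis (a) of the theorem is imposed.
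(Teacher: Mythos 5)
Your proof is correct and follows essentially the same route as the paper: the paper invokes the Brezis--Browder minimality principle (via Proposition \ref{c.BB-Fqg}) for the order $\le_f$ on $S_f(x_0)$, with the chain condition verified exactly as you do --- right $K$-Cauchyness from the telescoping inequalities plus sequential closedness of the sets $S_f(\cdot)$. Your only deviations are cosmetic: you use the dual (maximality) form of Brezis--Browder with $\varphi=-f$, and you dispose of the case $x\notin S_f(x_0)$ in (ii) via transitivity of $\le_f$ rather than redoing the (QF3) triangle estimate as the paper does.
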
\begin{proof}
Considering the order $\le_f$ given by   \eqref{def1.ord-Fqg} (with $f$ instead of $\varphi$), it follows that
$$
S_f(x)=S_-(x)=\{y\in X: y\le_f x\}\,.$$

Let $x_0\in \dom f$. The inequality

$$f(y)+d(y,x_0)\le f(x_0)\,,$$
shows that $S_f(x_0)\subseteq \dom f,$ so that  $f(S_f(x_0))\subseteq\mathbb{R}.$
Since $S_f(x_0)$ is  sequentially  $\mathcal U$-closed it follows that it is  sequentially right $\mathcal U$-$K$-complete too, so we can apply
 Proposition \ref{c.BB-Fqg} to conclude that  the set $  S_f(x_0)$ contains  a $\,\le_f$-minimal element $z$.

 The relation $z\in S_f(x_0)$ is equivalent to (i).

    To prove  (ii) suppose that $x\in X\smallsetminus\{z\}.$

  If $x\in S_f(x_0)\smallsetminus\{z\},$ then, by the minimality of $z$, the inequality $x\le_f z$ fails, so there exists $d_x\in D$ such that
  $$
 f(x)+ d_x(x,z)>f(z)\,.$$

  Suppose now that $x\in X\smallsetminus S_f(x_0)$ and that (ii) fails for this $x$, that is,
  $$
  \forall \delta\in D,\;\; f(z)\ge f(x)+\delta(x,z)\,.$$

  Given $d\in D$ choose $d'\in D$ according to (QF3). Then
  \begin{align*}
    d(x,x_0)&\le d'(x,z)+d'(z,x_0)\\
    &\le f(z)-f(x)+f(x_0)-f(z)\\
    &=f(x_0)-f(x)\,,
  \end{align*}
  implying $x\in S_f(x_0)$, in contradiction to the hypothesis.
  \end{proof}

  Taking into account  Remark \ref{re.lsc-clos}, one obtains the following result.
  \begin{corol}
    The conclusions (i) and (ii) of Theorem \ref{t1.EkVP-Fqg} hold if condition (a) is replaced by
    \begin{itemize}\item[\rm(a$'$)] the function $f$ is sequentially $\mathcal U$-lsc.\end{itemize}
  \end{corol}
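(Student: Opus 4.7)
The plan is to reduce the corollary directly to Theorem \ref{t1.EkVP-Fqg} by checking that condition (a$'$) implies condition (a); once this implication is established, the conclusions (i) and (ii) follow verbatim from the theorem. In other words, I would not touch the Brezis--Browder machinery again, but merely translate the lower semicontinuity assumption into the closedness assumption on the sublevel-type sets $S_f(x)$ defined by \eqref{def.S(x)}.

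The key step is precisely the content of Remark \ref{re.lsc-clos}: if $f$ is sequentially $\mathcal{U}$-lsc, then for every $x\in X$ the set $S_f(x)=S_-(x)$ is sequentially $\tau(\mathcal{U})$-closed. I would recall the short argument sketched there. Take a sequence $(y_n)\subseteq S_f(x)$ with $y_n\xrightarrow{\tau(\mathcal{U})}y$, so that $f(y_n)+d(y_n,x)\le f(x)$ for every $d\in D$. Given $d\in D$, pick $d'\in D$ with $d'\ge d$ and satisfying the triangle-type inequality (QF3). Item~2 of Proposition \ref{p2.top-Fqg} together with the sequential $\mathcal{U}$-lsc of $f$ then yields
\begin{equation*}
f(y)+d(y,x)\le\liminf_{n}\bigl[f(y_n)+d'(y_n,x)\bigr]\le f(x),
\end{equation*}
so $y\in S_f(x)$. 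Hence (a$'$) implies (a).

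With (a) established, I would simply invoke Theorem \ref{t1.EkVP-Fqg} for the given $x_0\in\dom f$ to produce $z\in X$ satisfying the two conditions (i) and (ii), completing the proof.

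The only subtle point, and the one worth flagging, is that in an $F$-quasi-gauge the triangle inequality is not available for a single $d\in D$ but only via an auxiliary $d'\ge d$ supplied by (QF3); this is exactly what forces the use of $d'(y_n,x)$ in the $\liminf$ while retaining $d(y,x)$ on the left. This small mismatch is handled by the monotonicity $d\le d'$ inside the inequality, and no further obstacle arises.
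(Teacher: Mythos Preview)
Your proposal is correct and follows exactly the route the paper takes: the paper simply says ``Taking into account Remark~\ref{re.lsc-clos}, one obtains the following result,'' which is precisely the reduction (a$'$)$\Rightarrow$(a) you spell out, followed by a direct appeal to Theorem~\ref{t1.EkVP-Fqg}. Your explicit identification of the role of (QF3) and the auxiliary $d'$ is a welcome clarification of the argument in Remark~\ref{re.lsc-clos}.
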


  The Ekeland Variational Principle from Theorem \ref{t1.EkVP-Fqg} implies the quasi-uniform analog of an  apparently stronger version proved by Fang \cite{fang96} (see also \cite{hamel01}).
  \begin{corol}\label{c1.EkVP-Fqg}  Suppose, in addition to the hypotheses of Theorem \ref{t1.EkVP-Fqg}, that an increasing function $\xi:D\to(0,\infty)$ is given.
  For  $\varepsilon>0$ suppose that   $x_0\in X$  satisfies the condition $f(x_0)\le\inf f(X)+\varepsilon$. Then  there exists $z\in X$ such that
\begin{align*}
  &{\rm (j)}\;\; \forall d\in D,\quad f(z)+ \varepsilon \xi(d)d(z,x_0)\le f(x_0);\\
   &{\rm (jj)}\;\; \forall d\in D,\quad d(z,x_0)\le\frac1{\xi(d)};\\
   &{\rm (jjj)}\;\; \forall x\in X\setminus \{z\},\; \exists  d_x\in D,\quad f(z)<f(x)+ \varepsilon \xi(d_x)d_x(x,z).
\end{align*}
  \end{corol}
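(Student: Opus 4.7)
The plan is to apply Theorem~\ref{t1.EkVP-Fqg} to the rescaled $F$-quasi-gauge $D^{\star}:=\{d^{\star}:d\in D\}$, where $d^{\star}(x,y):=\varepsilon\,\xi(d)\,d(x,y)$. First I would verify that $D^{\star}$ is an $F$-quasi-gauge on $X$ generating the same quasi-uniformity as $D$. Property (QF2) is immediate because $\varepsilon\,\xi(d)>0$ and $d(x,x)=0$. Property (QF1) follows from the upward directedness of $D$ together with the monotonicity of $\xi$: if $d_3\in D$ dominates $d_1,d_2$ pointwise, then $\xi(d_3)\ge\xi(d_i)$, so $d_3^{\star}\ge d_i^{\star}$ pointwise. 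For (QF3), given $d\in D$ choose $d'\ge d$ in $D$ witnessing the triangle-like inequality for $d$; then $(d')^{\star}\ge d^{\star}$ and
$$
d^{\star}(x,y)=\varepsilon\xi(d)\,d(x,y)\le\varepsilon\xi(d)\bigl(d'(x,z)+d'(z,y)\bigr)\le (d')^{\star}(x,z)+(d')^{\star}(z,y).
$$
Finally, the identity $V_{d^{\star},\eta}=V_{d,\eta/(\varepsilon\xi(d))}$ shows that $\mathcal U_{D^{\star}}=\mathcal U_D=\mathcal U$.

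Because $\mathcal U_{D^{\star}}=\mathcal U$, the hypotheses of sequential right $K$-completeness and $T_1$ separation carry over to the rescaled gauge automatically. The sequential $\mathcal U$-closedness of $S_f^{D^{\star}}(x)$ for every $x\in X$ follows from the sequential $\mathcal U$-lsc of $f$ via the argument of Remark~\ref{re.lsc-clos} applied to $D^{\star}$ (equivalently, one may invoke hypothesis~(a$'$) of the preceding corollary). Since $f$ is proper and $f(x_0)\le\inf f(X)+\varepsilon<\infty$, the point $x_0$ lies in $\dom f$.

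Applying Theorem~\ref{t1.EkVP-Fqg} with $D$ replaced by $D^{\star}$ at the starting point $x_0$, I obtain $z\in X$ such that $f(z)+d^{\star}(z,x_0)\le f(x_0)$ for every $d^{\star}\in D^{\star}$ and, for each $x\in X\sms\{z\}$, some $d_x^{\star}\in D^{\star}$ with $f(z)<f(x)+d_x^{\star}(x,z)$. Substituting back $d^{\star}=\varepsilon\xi(d)\,d$ yields (j) and (jjj) at once. For (jj) use (j) together with the near-minimality of $x_0$: for every $d\in D$,
$$
\varepsilon\xi(d)\,d(z,x_0)\le f(x_0)-f(z)\le f(x_0)-\inf f(X)\le\varepsilon,
$$
hence $d(z,x_0)\le 1/\xi(d)$.

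The main obstacle is the verification that $D^{\star}$ retains the $F$-quasi-gauge structure, in particular condition (QF3), and that the sequential closedness hypothesis of Theorem~\ref{t1.EkVP-Fqg} transfers to the rescaled family. Both rely on the positivity and monotonicity of $\xi$ and present no essential difficulty; once they are in place, the rest of the argument is mere unpacking of definitions.
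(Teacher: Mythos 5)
Your proposal is correct and follows essentially the same route as the paper: rescale the gauge to $\widetilde D=\{\varepsilon\xi(d)d : d\in D\}$, check it is an $F$-quasi-gauge generating the same quasi-uniformity via the identities for $V_{\tilde d,\gamma}$, apply Theorem~\ref{t1.EkVP-Fqg} to $\widetilde D$, and deduce (jj) from (j) and $f(x_0)\le\inf f(X)+\varepsilon$. You merely fill in the (QF1)--(QF3) verifications that the paper declares ``easy to check,'' and you are slightly more explicit than the paper about why the closedness hypothesis transfers to the rescaled gauge.
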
\begin{proof} For $d\in D$ put $\tilde d=\varepsilon \xi(d)d$ and $\widetilde D=\{\tilde d: d\in D\}. $ It is easy to check that  $\widetilde D$ satisfies the conditions (QF1)--(QF3) from   Definition \ref{def.Fqg}.

  Furthermore, $\mathcal U_{\widetilde D}=\mathcal U_D=\mathcal U$. This follows from the equalities
  $$
  V_{\tilde d,\gamma}=V_{d,\frac \gamma{\varepsilon \xi(d)}}\quad\mbox{and}\quad V_{d,\gamma'}=V_{\tilde d,\varepsilon \xi(d) \gamma'}\,,$$
  valid for all $d\in D$ and $\gamma,\gamma'>0$.

  Apply Theorem \ref{t1.EkVP-Fqg} to the quasi-uniform space $(X,\mathcal U)$  with the quasi-uniformity generated by the $F$-quasi-gauge $\widetilde D$ and the function $f$.
  Then (i) is equivalent to (j) and (ii) to (jjj).

  Let $\alpha=\inf f(X)$. Then by (j), for all $d\in D$,
    $$
  \varepsilon \xi(d)d(z,x_0)\le f(x_0)-f(z)\le \alpha+\varepsilon-\alpha=\varepsilon\,,$$
  implying $\xi(d)d(z,x_0)\le1\,$ which is equivalent to (jj).
  \end{proof}

  Caristi-Kirk fixed point theorem takes the following form in this case.

  \begin{theo}\label{t.Car-Fqg}  Let $(X,\mathcal U)$ be a quasi-uniform space whose quasi-uniformity is generated by an $F$-quasi-gauge $D$, (i.e. $\,\mathcal U=\mathcal U_D$).
 Suppose  that the function  $\varphi:X\to \mathbb{R}\cup\{\infty\}$ is proper, bounded below and  that the set $S_\varphi(x)$, given by \eqref{def.S(x)}, is sequentially $\mathcal U$-closed for every $x\in X$.
Suppose further that  $(X,\mathcal U)$ is   sequentially right $K$-complete, the topology $\tau(\mathcal U)$ is $T_1$  and that $F:X\rightrightarrows X  $ is  a set-valued mapping with nonempty values.
\begin{enumerate}\item[\rm 1.]
If
\begin{equation}\label{eq1.Car-Fqg}
 \forall x\in X,\; \exists y\in F(x)\;\mbox{ s.t. }\; \forall d\in D,\;\varphi(y)+d(y,x)\le \varphi(x)\,,\end{equation}
then $F$ has a fixed point in $X$, i.e. there exists $z\in X$ such that $z\in F(z).$
\item[\rm 2.]
If
\begin{equation}\label{eq2.Car-Fqg}
\;\forall x\in X, \;\forall y\in F(x), \; \forall d\in D,\; \varphi(y)+d(y,x)\le \varphi(x)\,,\end{equation}
then $F$ has a stationary point in $X$, i.e. there exists $z\in X$ such that $F(z)=\{z\}.$
     \end{enumerate}\end{theo}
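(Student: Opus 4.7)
The plan is to deduce Caristi-Kirk directly from Ekeland's variational principle (Theorem \ref{t1.EkVP-Fqg}), which is already available. The hypotheses of that theorem are exactly the standing assumptions here (sequential right $K$-completeness, $T_1$, proper bounded below $\varphi$, sequentially $\mathcal U$-closed sublevel sets $S_\varphi(x)$), so it applies to $\varphi$ directly.

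First, since $\varphi$ is proper, I would pick any $x_0\in\dom\varphi$ and invoke Theorem \ref{t1.EkVP-Fqg} (with $f=\varphi$) to produce a point $z\in X$ satisfying
\begin{itemize}
\item[(i)] $\varphi(z)+d(z,x_0)\le \varphi(x_0)$ for all $d\in D$, and
\item[(ii)] for every $x\in X\sms\{z\}$ there exists $d_x\in D$ such that $\varphi(z)<\varphi(x)+d_x(x,z)$.
\end{itemize}
Note that (i) places $z$ in $\dom\varphi$. This $z$ will be the candidate fixed (resp.\ stationary) point of $F$.

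For assertion 1, I would apply hypothesis \eqref{eq1.Car-Fqg} at $x=z$ to obtain $y\in F(z)$ with $\varphi(y)+d(y,z)\le\varphi(z)$ for all $d\in D$. If $y\ne z$, then (ii) yields some $d_y\in D$ with $\varphi(z)<\varphi(y)+d_y(y,z)$, directly contradicting the displayed inequality applied to $d=d_y$. Hence $y=z$, so $z\in F(z)$.

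For assertion 2, the argument is identical but run for every $y\in F(z)$: hypothesis \eqref{eq2.Car-Fqg} gives $\varphi(y)+d(y,z)\le\varphi(z)$ for all $d\in D$, and if some such $y$ were distinct from $z$, choosing $d=d_y$ from (ii) would again contradict \eqref{eq2.Car-Fqg}. Thus $F(z)\subseteq\{z\}$; combined with the assumption that $F$ has nonempty values, this gives $F(z)=\{z\}$, i.e.\ $z$ is stationary.

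There is no real obstacle here: the work was done in establishing Theorem \ref{t1.EkVP-Fqg}, and the present argument is just the standard one-line derivation of Caristi from Ekeland, with the only mild care being that the ``$\forall d$'' quantifier on the Caristi side matches the ``$\exists d_x$'' on the Ekeland side in exactly the right direction for the contradiction to go through.
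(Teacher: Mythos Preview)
Your proposal is correct and follows essentially the same approach as the paper: both apply Theorem \ref{t1.EkVP-Fqg} to $\varphi$ and use conclusion (ii) to force the point $y\in F(z)$ (resp.\ every $y\in F(z)$) to coincide with $z$. The only cosmetic difference is that the paper phrases the contradiction via the set identity $S_\varphi(z)\cap F(z)=\{z\}$, while you argue element-wise; the logic is identical.
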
 \begin{proof}
     Notice that the condition \eqref{eq1.Car-Fqg} is equivalent to
     \begin{equation}\label{eq3.Car-Fqg}
 \forall x\in X,\;\; S_\varphi(x)\cap F(x)\ne\emptyset\,.
     \end{equation}
     \
      By Theorem \ref{t1.EkVP-Fqg} applied to $X$ and $\varphi$, there exists $z\in X$ such that,   for all $y\in X$,
   $$
   y\ne z\;\Rightarrow\; \exists d_y\in D,\;\; \varphi(z)<\varphi(y)+d_y(y,z)\,.$$

     It follows that no point $y\in F(z)\sms\{z\}$ belongs to $S_\varphi(z)$, so that,  by \eqref{eq3.Car-Fqg}, $S_\varphi(z)\cap F(z) =\{z\} $. Hence, $z\in F(z).$

     In the second case, the condition \eqref{eq2.Car-Fqg}  implies $F(z)\subseteq S_\varphi(z)\,,$  so that
     $$
     F(z)=S_\varphi(z)\cap F(z) =\{z\}\,.$$
     \end{proof}

     In the case of single-valued mappings the Caristi-Kirk fixed point theorem takes the following form.
     \begin{corol}\label{c.Car-Fqg}
     Suppose that $(X,\mathcal U),\, D$ and $\varphi:X\to\mathbb{R}\cup\{\infty\}$ satisfy the hypotheses of Theorem \ref{t.Car-Fqg}.
     If $f:X\to X$ is a mapping such that
     \begin{equation*}
\forall d\in D,\quad   \varphi(f(x))+d(f(x),x)\le \varphi(x)\,,\end{equation*}
 for all $ x\in X$, then $f$ has a fixed point in $X$.
     \end{corol}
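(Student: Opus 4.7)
The plan is to reduce the single-valued statement to the set-valued version already established in Theorem \ref{t.Car-Fqg}. I would define the set-valued mapping $F:X\rightrightarrows X$ by $F(x)=\{f(x)\}$ for every $x\in X$; this is a set-valued map with nonempty (singleton) values, so it meets the standing assumption of Theorem \ref{t.Car-Fqg}.

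Next I would verify that the hypothesis of the corollary gives exactly the descent condition needed by Theorem \ref{t.Car-Fqg}. Indeed, the assumption
\[
\forall d\in D,\quad \varphi(f(x))+d(f(x),x)\le \varphi(x)
\]
for all $x\in X$ says that the unique point $y=f(x)\in F(x)$ lies in $S_\varphi(x)$. This is simultaneously condition \eqref{eq1.Car-Fqg} (``there exists $y\in F(x)$ with $\varphi(y)+d(y,x)\le\varphi(x)$ for every $d\in D$'') and condition \eqref{eq2.Car-Fqg} (``every $y\in F(x)$ satisfies this inequality''), because $F(x)$ has exactly one element.

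Since the quasi-uniform space $(X,\mathcal U=\mathcal U_D)$ is sequentially right $K$-complete with $\tau(\mathcal U)$ of type $T_1$, and $\varphi$ is proper, bounded below with $S_\varphi(x)$ sequentially $\mathcal U$-closed for every $x$, Theorem \ref{t.Car-Fqg} applies. It yields a point $z\in X$ with $z\in F(z)$, that is $z=f(z)$, which is the desired fixed point. (Alternatively, applying part~2 of the same theorem yields the stronger conclusion $F(z)=\{z\}$, i.e., $f(z)=z$, directly.) No genuine obstacle arises; the only thing to check is the trivial observation that the single-valued descent hypothesis encodes both set-valued hypotheses of the preceding theorem, and that fixed-point conclusion $z\in F(z)$ for a singleton-valued $F$ translates to $z=f(z)$.
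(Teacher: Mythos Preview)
Your proof is correct and is exactly the approach the paper has in mind: the corollary is stated immediately after Theorem~\ref{t.Car-Fqg} without proof, precisely because taking $F(x)=\{f(x)\}$ reduces it to either part of that theorem.
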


     Again, taking into account Remark \ref{re.lsc-clos}, one obtains the following result.
     \begin{corol}\label{c2.Car-Fqg}
     The conclusions of Theorem \ref{t.Car-Fqg} and Corollary \ref{c.Car-Fqg} remain true if the closedness hypothesis on the set $S_\varphi(x)$ is replaced by
     the condition
     \begin{itemize}\item the function $\varphi$ is sequentially $\mathcal U$-lsc.\end{itemize}
     \end{corol}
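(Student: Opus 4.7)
The plan is to reduce the corollary to the results already in hand by showing that sequential $\mathcal{U}$-lower semicontinuity of $\varphi$ implies the sequential $\mathcal{U}$-closedness of $S_\varphi(x)$ for every $x\in X$; once this is verified, Theorem \ref{t.Car-Fqg} and Corollary \ref{c.Car-Fqg} apply verbatim and deliver their conclusions.

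The key step is essentially a rerun of the argument sketched in Remark \ref{re.lsc-clos}, adapted to an extended-real-valued $\varphi$. First I dispose of the trivial case: if $\varphi(x)=\infty$, then $S_\varphi(x)=X$ is automatically closed. Otherwise, take a sequence $(y_n)$ in $S_\varphi(x)$ that is $\tau(\mathcal{U})$-convergent to some $y\in X$, so that, in view of item 1 of Proposition \ref{p2.top-Fqg}, $d(y,y_n)\to 0$ for every $d\in D$. By definition of $S_\varphi(x)$, for every $n\in\mathbb{N}$ and every $d\in D$,
\begin{equation*}
\varphi(y_n)+d(y_n,x)\le\varphi(x)<\infty.
\end{equation*}
Now fix $d\in D$ and pick $d'\in D$ with $d\le d'$ satisfying condition (QF3) of Definition \ref{def.Fqg}. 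Applying item 2 of Proposition \ref{p2.top-Fqg} to the lower-semicontinuous $\varphi$ and to $d'(\cdot,x)$ (via the triangle-type inequality from (QF3)) I obtain
\begin{equation*}
\varphi(y)+d(y,x)\le\liminf_{n\to\infty}\bigl[\varphi(y_n)+d'(y_n,x)\bigr]\le\varphi(x),
\end{equation*}
which shows $y\in S_\varphi(x)$, establishing sequential $\mathcal{U}$-closedness.

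With this claim in hand, the corollary is immediate: the closedness hypothesis on $S_\varphi(x)$ required in Theorem \ref{t.Car-Fqg} (and thereby in Corollary \ref{c.Car-Fqg}) is fulfilled, so the existence of a fixed point for $F$ in part 1, of a stationary point in part 2, and of a fixed point for the single-valued $f$ in Corollary \ref{c.Car-Fqg}, all follow. I do not anticipate any real obstacle; the only subtlety is handling the value $+\infty$ properly and making sure the choice of $d'$ is invoked before passing to $\liminf$, since in the $F$-quasi-gauge framework the triangle inequality goes through the companion $d'$ rather than through $d$ itself.
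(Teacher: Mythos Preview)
Your proposal is correct and follows exactly the route the paper takes: the paper simply invokes Remark \ref{re.lsc-clos} to infer that sequential $\mathcal U$-lower semicontinuity of $\varphi$ forces each $S_\varphi(x)$ to be sequentially $\mathcal U$-closed, and then applies Theorem \ref{t.Car-Fqg} and Corollary \ref{c.Car-Fqg} unchanged. Your write-up makes this explicit and even handles the case $\varphi(x)=\infty$ and the role of the companion $d'$ from (QF3), which the paper leaves implicit.
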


    \begin{remark}  \begin{enumerate}\item[\rm 1.] In the uniform case the analog of Corollary \ref{c2.Car-Fqg} was obtained by Hamel   \cite{hamel05} and in  $F$-gauge spaces by Fang \cite{fang96}     (see also \cite{hamel01}).
    \item[\rm 2.] As in the case of Corollary \ref{c1.EkVP-Fqg}, the inequalities in \eqref{eq1.Car-Fqg} and  \eqref{eq2.Car-Fqg} can be replaced with
    $$\varphi(y)+\xi(d)d(y,x)\le \varphi(x)\,,$$
    where $\xi:D\to(0,\infty)$ is an increasing function,   obtaining the same conclusions.
    \end{enumerate}\end{remark}

    Takahashi's principle (see \cite{taka91}  or \cite{Taka00}) has an analog in this case too.
     \begin{theo}\label{t.Taka-Fqg}  Let $(X,\mathcal U)$ be a quasi-uniform space whose quasi-uniformity is generated by an $F$-quasi-gauge $D$, (i.e. $\mathcal U=\mathcal U_D$)
 and $f:X\to \mathbb{R}\cup\{\infty\}$.
Suppose that  $(X,\mathcal U)$ is   sequentially right $K$-complete, the topology $\tau(\mathcal U)$ is $T_1$  and $f  $ is  proper, bounded below  and  that the set $S_f(x)$ is sequentially $\mathcal U$-closed for every $x\in X$.  If for every  $x\in X$   with $f(x)>\inf f(X)$ there exists $y\in X\smallsetminus\{x\}$ and that
\begin{equation}\label{eq1.Taka-Fqg}
\forall d\in D,\quad f(y)+d(y,x)\le f(x)\,,\end{equation}
then there exists $ z\in X$ with $f(z)=\inf f(X)$.
        \end{theo}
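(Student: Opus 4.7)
The plan is to reduce this to the Ekeland Variational Principle already proved as Theorem \ref{t1.EkVP-Fqg}, via a standard contradiction argument.

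First I would observe that, since $f$ is proper, we may fix some $x_0 \in \dom f$. All the hypotheses of Theorem \ref{t1.EkVP-Fqg} are assumed: $(X,\mathcal U)$ is sequentially right $K$-complete, $\tau(\mathcal U)$ is $T_1$, $f$ is proper and bounded below, and $S_f(x)$ is sequentially $\mathcal U$-closed for every $x \in X$. Applying that theorem to $x_0$ produces a point $z \in X$ satisfying the two conclusions
\begin{align*}
 &{\rm (i)}\;\; \forall d\in D,\quad f(z)+ d(z,x_0)\le f(x_0);\\
 &{\rm (ii)}\;\; \forall x\in X\setminus \{z\},\; \exists  d_x\in D,\quad f(z)<f(x)+ d_x(x,z).
\end{align*}
In particular (i) forces $z \in \dom f$, so $f(z) \in \mathbb{R}$, and we have $f(z) \ge \inf f(X)$ since $f$ is bounded below.

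Next I would suppose for contradiction that $f(z) > \inf f(X)$. The Takahashi hypothesis \eqref{eq1.Taka-Fqg} applied at $x = z$ then yields a point $y \in X \setminus \{z\}$ such that
\begin{equation*}
 \forall d\in D,\quad f(y) + d(y,z) \le f(z).
\end{equation*}
Now apply conclusion (ii) from the Ekeland step to this particular $y$: there exists $d_y \in D$ with $f(z) < f(y) + d_y(y,z)$. But taking $d = d_y$ in the Takahashi inequality gives $f(y) + d_y(y,z) \le f(z)$, a direct contradiction. Therefore $f(z) = \inf f(X)$, which is the desired conclusion.

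The argument is essentially a one-line contrapositive once Theorem \ref{t1.EkVP-Fqg} is available, so no real obstacle arises; the only thing to be careful about is that one must extract the contradiction using the \emph{same} $d_y$ provided by (ii), relying on the fact that Takahashi's hypothesis supplies the inequality uniformly over all $d \in D$. No additional appeal to (QF3) or to the closedness of $S_f(x)$ is needed beyond what is already used in the Ekeland principle itself.
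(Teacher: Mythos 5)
Your proof is correct, but it takes a different route from the paper's. The paper derives Takahashi's principle from the Caristi--Kirk fixed point theorem (Theorem \ref{t.Car-Fqg}): it sets $F(x)=S_f(x)$, notes that $F(x)\ne\emptyset$ (since $x\in F(x)$) and that condition \eqref{eq2.Car-Fqg} holds trivially, invokes the stationary-point part of Theorem \ref{t.Car-Fqg} to get $z$ with $S_f(z)=\{z\}$, and then observes that $f(z)>\inf f(X)$ would force $S_f(z)\smallsetminus\{z\}\ne\emptyset$ by the Takahashi hypothesis. You instead apply Theorem \ref{t1.EkVP-Fqg} directly and extract the contradiction from conclusion (ii) at the point $y$ supplied by the Takahashi hypothesis, using the same $d_y$ on both sides. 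Since the paper's Caristi--Kirk theorem is itself deduced from Theorem \ref{t1.EkVP-Fqg}, the two arguments have the same logical depth; yours is a little more direct and essentially reproduces the implication $1\Rightarrow 3$ of the equivalence result (Theorem \ref{t1.wEk-Taka-Car}), whereas the paper's version illustrates how Caristi--Kirk can be used as the intermediate step. Your argument also yields the extra localization $f(z)+d(z,x_0)\le f(x_0)$ for all $d\in D$ as a by-product, though it is not needed for the stated conclusion. Both proofs are valid, and your observation that the contradiction must be extracted with the specific $d_y$ from (ii), against the uniform-in-$d$ Takahashi inequality, is exactly the right point of care.
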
  \begin{proof}  We show that the result can be obtained as a consequence of the Caristi-Kirk fixed point theorem (Theorem \ref{t.Car-Fqg}).
          Let $F:X\rightrightarrows X$ be defined by
          $$
          F(x)=S_f(x)=\{y\in X: f(y)+d(y,x)\le f(x)\;\mbox{for all}\; d\in D\},\quad x\in X.$$

          Then $F(x)\ne\emptyset$, as $x\in F(x)$. Since $F(x)\cap S_f(x)=S_f(x)$ it follows that  \eqref{eq2.Car-Fqg} is satisfied (with $f$ instead of $\varphi$). By Theorem \ref{t.Car-Fqg} there exists
          $z\in X$ such that $F(z)=\{z\}$, i.e. $S_f(z)=\{z\}$. If $f(z)>\inf f(X)$, then, by \eqref{eq1.Taka-Fqg}, $S_f(z)\sms\{z\}\ne \emptyset,$ a contradiction. Consequently, $f(z)=\inf f(X)$.
        \end{proof}

        A   result similar   to  Takahashi's  minimization principle  (Theorem \ref{t.Taka-Fqg}) in metric spaces, under a slightly relaxed condition on the function $f$, was found by Arutyunov and  Gel'man \cite{arut-gel09} and Arutyunov \cite{arut15}.  For some   further recent developments see \cite{arut-zhuk19a} and \cite{arut-zhuk19b}.

        We    present first the case   when the result can be obtained from Ekeland Variational Principle.  The general case will be treated later in Theorem \ref{t2.Arut}.

 \begin{theo}[\cite{arut15}, Theorem 3]\label{t.Arut-Fqg} Let $(X,\mathcal U)$ be a quasi-uniform space whose quasi-uniformity is generated by an $F$-quasi-gauge $D$, (i.e. $\mathcal U=\mathcal U_D$)
 and $f:X\to [\alpha,\infty]$.
Suppose that  $(X,\mathcal U)$ is   sequentially right $K$-complete, the topology $\tau(\mathcal U)$ is $T_1$  and $f  $ is  proper, bounded below,  with $\alpha:=\inf f(X)>-\infty$, and that the set $S_f(x)$ is sequentially $\mathcal U$-closed for every $x\in X$.  Suppose also that there exists $\gamma>0$ such that for every $x\in X$ with $f(x)>\alpha$ there is $x'\in X\smallsetminus\{x\}$  satisfying
\begin{equation}\label{eq1.Arut}
 \forall d\in D,\quad f(x')+\gamma d(x',x)\le f(x)\,.
\end{equation}
Then, for every $x_0\in \dom f$   there exists $\bar x\in X$  such that
 \begin{equation}\label{eq2.Arut}\begin{aligned}
   {\rm (a) }&\quad  f(\bar x)=\inf f(X)=\alpha\quad\mbox{ and }\\
   \quad {\rm (b) }& \quad d(\bar x,x_0)\le \frac{f(x_0)-\alpha}{\gamma}\,.
 \end{aligned}
\end{equation}\end{theo}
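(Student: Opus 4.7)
The plan is to reduce the statement to the Ekeland Variational Principle already established in Theorem \ref{t1.EkVP-Fqg}, absorbing the constant $\gamma$ from hypothesis \eqref{eq1.Arut} into the $F$-quasi-gauge. Concretely, I would set $\tilde D := \{\gamma d : d \in D\}$. It is immediate that $\tilde D$ satisfies (QF1)--(QF3) of Definition \ref{def.Fqg}: directedness and non-negativity are preserved under multiplication by the positive scalar $\gamma$, and if $d' \ge d$ witnesses the triangle inequality in $D$, then $\gamma d' \ge \gamma d$ witnesses it in $\tilde D$. Moreover, the identity $V_{\gamma d, \varepsilon} = V_{d, \varepsilon/\gamma}$, valid for every $d \in D$ and $\varepsilon > 0$, shows that the generated quasi-uniformity $\mathcal U_{\tilde D}$ coincides with $\mathcal U_D = \mathcal U$. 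Hence sequential right $K$-completeness and the $T_1$ property of $\tau(\mathcal U)$ carry over, and the closedness hypothesis on the level-type sets is to be used in the form adapted to $\tilde D$ (which is automatic if $f$ is sequentially $\mathcal U$-lsc, via Remark \ref{re.lsc-clos}).

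Applying Theorem \ref{t1.EkVP-Fqg} to $(X,\mathcal U)$ presented via the $F$-quasi-gauge $\tilde D$, to the function $f$, and to the point $x_0 \in \dom f$ yields $\bar x \in X$ such that
\begin{align*}
  &\text{(i)}\quad f(\bar x) + \gamma d(\bar x, x_0) \le f(x_0) \quad \text{for all } d \in D, \\
  &\text{(ii)}\quad \forall x \in X \setminus \{\bar x\},\; \exists d_x \in D,\quad f(\bar x) < f(x) + \gamma d_x(x, \bar x).
\end{align*}

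To prove (a) I would argue by contradiction: assume $f(\bar x) > \alpha$. Then hypothesis \eqref{eq1.Arut} applied at $\bar x$ furnishes some $x' \in X \setminus \{\bar x\}$ with $f(x') + \gamma d(x', \bar x) \le f(\bar x)$ for every $d \in D$; but (ii) at this $x'$ gives some $d_{x'} \in D$ with $f(\bar x) < f(x') + \gamma d_{x'}(x', \bar x)$, a direct contradiction. Thus $f(\bar x) = \alpha$. For (b), plugging an arbitrary $d \in D$ into (i) and using $f(\bar x) = \alpha$ gives $\gamma d(\bar x, x_0) \le f(x_0) - \alpha$, i.e., $d(\bar x, x_0) \le (f(x_0) - \alpha)/\gamma$. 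The only delicate point is the transfer of the closedness hypothesis from $D$ to $\tilde D$ in the first paragraph, since $\tilde S_f(x) \ne S_f(x)$ in general when $\gamma \ne 1$; this is handled transparently by interpreting the hypothesis as sequential $\mathcal U$-lsc of $f$, in line with the corollaries that follow Theorem \ref{t1.EkVP-Fqg}.
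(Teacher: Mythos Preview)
Your approach is essentially identical to the paper's: the paper applies Theorem~\ref{t1.EkVP-Fqg} with each $d$ replaced by $\frac{\varepsilon}{\lambda}\,d$, where $\varepsilon=f(x_0)-\alpha$ and $\lambda=\varepsilon/\gamma$, so that $\varepsilon/\lambda=\gamma$ and the rescaled gauge is precisely your $\tilde D=\{\gamma d:d\in D\}$; the contradiction argument for (a) and the derivation of (b) from condition~(i) match yours line for line. The closedness-transfer issue you flag at the end (that $\tilde S_f(x)\ne S_f(x)$ when $\gamma\ne 1$) is genuine and is in fact glossed over in the paper's own proof (and likewise in the proof of Corollary~\ref{c1.EkVP-Fqg}); your fallback to sequential $\mathcal U$-lsc of $f$ is the honest resolution.
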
\begin{proof}
 If $f(x_0)=\alpha$, then $f(x_0)=\inf f(X)$, and   there is nothing to prove.

Suppose  $f(x_0)>\alpha$ and apply  Theorem \ref{t1.EkVP-Fqg} with $d$ replaced by $\frac{\varepsilon}{\lambda} d, $ where  $\varepsilon:=f(x_0)-\alpha>0$    and $\lambda=\varepsilon/\gamma,$  for all $d\in D$. Then there exists $\bar x\in X$ such that
\begin{equation}\label{eq9.Arut}\begin{aligned}
{\rm (a)}& \;\quad \;\forall x\in X\smallsetminus\{\bar x\},\; \exists d_x, \; f(\bar x) < f(x)+\frac{\varepsilon}{\lambda}d_x(x,\bar x);\\
{\rm (b)}&\; \quad \forall d\in D,\;  f(\bar x)+\frac{\varepsilon}{\lambda}d(\bar x,x_0)\le f(x_0)\,.
\end{aligned}\end{equation}

If $f(\bar x)>\alpha$, then, by \eqref{eq1.Arut}, there exists $x\ne\bar x$ such that
$$
 f(x)+\gamma d_{x}(x,\bar x)\le f(\bar x)\,,$$
 in contradiction to \eqref{eq9.Arut}.(a) (because $\gamma=\varepsilon/\lambda$).

 Hence $f(\bar x)=\alpha=\inf f(X)$.

 Now, by \eqref{eq9.Arut}.(b),
 $$
\forall d\in D,\;\; \gamma d(\bar x,x_0)\le f(x_0)-\alpha\,,$$
 which yields \eqref{eq2.Arut}.(b).
\end{proof}

 \begin{remark}
  Replacing the metric $d$ with the equivalent one $\tilde d=\gamma d,$ the condition \eqref{eq1.Arut} becomes Takahashi's condition \eqref{eq1.Taka-Fqg}.
  Arutyunov \cite{arut15} calls \eqref{eq1.Arut}  a \emph{ Caristi-type condition}.
\end{remark}

Finally, we prove an equilibrium version of Ekeland Variational Principle. Let $(X,\mathcal U)$ be a quasi-uniform space with the quasi-uniformity $\mathcal U$ generated by an F-quasi-gauge  $D$.

Suppose that the mapping  $F:X\times X\to\mathbb{R}\cup\{\infty\}$ satisfies the following conditions:
\begin{equation}\label{def.eq-fcs}\begin{aligned}
  {\rm (E1)}&\quad \forall x\in X,\; F(x,x)=0;\\
  {\rm (E2)}&\quad \forall x,y,z\in X,\;F(x,z)\le F(x,y)+F(y,z);\\
  {\rm (E3)}&\quad \forall x\in X,\; \mbox{the function}\;  F(x,\cdot) \;\mbox{is sequentially } \mathcal U\mbox{-lsc};\\
  {\rm (E4)}&\quad \exists x_0\in X,\; \inf\{F(x_0,y):y\in X\}>-\infty\,.
\end{aligned}\end{equation}

For $x\in X$ let
\begin{equation*}
S(x)=\{y\in X:F(x,y)+d(y,x)\le 0\;\mbox{for all}\; d\in D\}\,.\end{equation*}

Following the paper \cite{ans12} let us consider the weaker condition:
\begin{equation*}
  {\rm (E3a)}\quad \mbox{the level set } S(x) \mbox{ is closed for every } x\in X.\smallskip
\end{equation*}

It is easy to check that \begin{equation}\label{eq.E3}{\rm (E3)}\;\Rightarrow\; {\rm(E3a)}.
\end{equation}
Indeed, if $(y_n)$ is a sequence  in $S(x)$ that is $\mathcal U$-convergent to some $y\in X$, then, for every $\delta\in D$,
$$
F(x,y_n)+\delta(y_n,x)\le 0\,,$$
for all $n\in\mathbb{N}.$  Given  $d\in D$ and let  $d'\in D$ be given by condition (QF3) from Definition \ref{def.Fqg}. Then, by the $\mathcal U$-lsc of the functions $F(x,\cdot)$ and Proposition \ref{p2.top-Fqg}, item 2,
$$ F(x,y)+d(y,x)\le \liminf_n[F(x,y_n)+d'(y_n,x)]\le 0\,,$$
i.e. $y\in S(x)$. This  shows that  $S(x)$ is sequentially $\mathcal U$-closed.

The following version of the  Oettli-Th\'era theorem extends to F-quasi-gauge  spaces a result  obtained in \cite{ans12} in the case of metric spaces.

\begin{theo}\label{t.OT-Fqg}
Suppose that  $(X,\mathcal U)$ is a sequentially right $K$-complete quasi-uniform space whose quasi-uniformity is generated by an $F$-quasi-gauge $D$ (i.e. $\mathcal U=\mathcal U_D$) and that the topology $\tau(\mathcal U)$ is $T_1$.
Let the  function   $F:X\times X\to \mathbb{R}\cup\{\infty\}$ and $x_0\in X$ satisfy the conditions (E1),(E2), (E3a) and (E4) from above. Then there exists $z\in S(x_0)$ such that
\begin{equation}\label{eq1.OT-Fqg}
\forall x\in X\smallsetminus\{z\},\; \exists d_x\in D\; \mbox{ s.t. } \; F(z,x)+d_x(x,z)>0\,.\end{equation}
\end{theo}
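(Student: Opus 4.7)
The plan is to mimic the proof of Ekeland's principle (Theorem \ref{t1.EkVP-Fqg}) but with the bifunction $F$ playing the role of a generalized ``distance'' and $\varphi(x):=F(x_0,x)$ playing the role of the objective. Define on $X$ the relation
\[
y\preccurlyeq x \iff y\in S(x)\iff \forall d\in D,\quad F(x,y)+d(y,x)\le 0.
\]
The first step is to verify that $\preccurlyeq$ is a partial order. Reflexivity follows from (E1). For transitivity, if $z\preccurlyeq y\preccurlyeq x$, then, given $d\in D$, choose $d'\in D$ with $d'\ge d$ as in (QF3); adding the inequalities $F(x,y)+d'(y,x)\le 0$ and $F(y,z)+d'(z,y)\le 0$ and using (E2) and (QF3) yields $F(x,z)+d(z,x)\le 0$. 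Antisymmetry uses the $T_1$-hypothesis: if $x\preccurlyeq y$ and $y\preccurlyeq x$, then (E1)--(E2) give $0=F(x,x)\le F(x,y)+F(y,x)$, which combined with $F(x,y)+d(y,x)\le 0$ and $F(y,x)+d(x,y)\le 0$ forces $d(x,y)=d(y,x)=0$ for every $d\in D$; by Proposition \ref{p2.top-Fqg}.4 this gives $x=y$.

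Next, set $\varphi(x):=F(x_0,x)$ on $S(x_0)$. Since $y\in S(x_0)$ implies $F(x_0,y)\le -d(y,x_0)\le 0$ and (E4) yields a finite lower bound $\alpha$, the function $\varphi$ takes values in $[\alpha,0]$ on $S(x_0)$. Moreover $\varphi$ is strictly decreasing with respect to $\preccurlyeq$ on $S(x_0)$: if $y\preccurlyeq x$ with $y\ne x$, then by (E2) $\varphi(y)-\varphi(x)\le F(x,y)\le -d(y,x)$ for every $d\in D$, and the $T_1$ assumption (Proposition \ref{p2.top-Fqg}.4) supplies some $d\in D$ with $d(y,x)>0$, so $\varphi(y)<\varphi(x)$.

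The crux of the argument is to verify the chain condition (b) of the Brezis--Browder minimality principle (Theorem \ref{t.BB-Fqg}) on $(S(x_0),\preccurlyeq)$. Let $x_1\succcurlyeq x_2\succcurlyeq\cdots$ be a decreasing sequence in $S(x_0)$. From $x_{n+k}\preccurlyeq x_n$ and (E2) one gets
\[
\varphi(x_{n+k})\le \varphi(x_n)+F(x_n,x_{n+k}),\qquad d(x_{n+k},x_n)\le -F(x_n,x_{n+k}),
\]
for every $d\in D$, and hence $d(x_{n+k},x_n)\le \varphi(x_n)-\varphi(x_{n+k})$. Since $(\varphi(x_n))$ is decreasing and bounded below, it is Cauchy in $\mathbb{R}$, and therefore $(x_n)$ is right $K$-Cauchy with respect to each $d\in D$, i.e.\ right $\mathcal U$-$K$-Cauchy. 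Sequential right $K$-completeness yields $y\in X$ with $x_n\xrightarrow{\tau(\mathcal U)} y$; for each fixed $n$ one has $x_{n+k}\in S(x_n)$ for all $k\ge 0$, so the closedness assumption (E3a) gives $y\in S(x_n)$, i.e.\ $y\preccurlyeq x_n$ for every $n$. Hence $y\in S(x_0)$ is the desired lower bound.

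Brezis--Browder then produces $z\in S(x_0)$ that is $\preccurlyeq$-minimal. To conclude, take any $x\in X\setminus\{z\}$. If $F(z,x)+d(x,z)\le 0$ were to hold for every $d\in D$, then $x\preccurlyeq z\preccurlyeq x_0$, so $x\in S(x_0)$; minimality of $z$ would then force $x=z$, a contradiction. Therefore some $d_x\in D$ satisfies $F(z,x)+d_x(x,z)>0$, which is \eqref{eq1.OT-Fqg}. The main technical obstacle is the transitivity check for $\preccurlyeq$ and the $K$-Cauchy estimate for decreasing chains, both of which require a careful use of (QF3) and (E2); everything else is bookkeeping parallel to the Ekeland/Caristi arguments of Theorems \ref{t1.EkVP-Fqg} and \ref{t.Car-Fqg}.
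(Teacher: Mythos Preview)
Your argument is correct. It differs from the paper's proof, which is a two-line reduction: apply Theorem \ref{t1.EkVP-Fqg} to the function $f(x):=F(x_0,x)$, so that condition (i) there reads $z\in S(x_0)$, and then use (E2) to pass from the Ekeland inequality $F(x_0,z)<F(x_0,x)+d_x(x,z)$ to $F(z,x)+d_x(x,z)>0$.

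What you do instead is rerun the Brezis--Browder machinery directly for the order $y\preccurlyeq x\iff y\in S(x)$, verifying reflexivity, transitivity (via (QF3) and (E2)), antisymmetry (via (E1), (E2) and the $T_1$ hypothesis), strict monotonicity of $\varphi=F(x_0,\cdot)$, and the chain condition using (E3a). This is longer but has a genuine advantage: the closedness hypothesis you invoke is exactly (E3a), i.e.\ the closedness of $S(x)=\{y:F(x,y)+d(y,x)\le 0\ \forall d\}$. The paper's reduction to Theorem \ref{t1.EkVP-Fqg} formally requires the closedness of $S_f(x)=\{y:F(x_0,y)+d(y,x)\le F(x_0,x)\ \forall d\}$, which in general only contains $S(x)$ (by (E2)) and is not obviously closed from (E3a) alone; that verification is left implicit there. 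So your direct route, while more laborious, matches the stated hypothesis (E3a) on the nose, whereas the paper's shortcut is cleaner once one is willing to appeal to (E3) or to check the extra closedness.
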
\begin{proof}
  We  apply Theorem \ref{t1.EkVP-Fqg} to the function $f:X\to\mathbb{R}\cup\{\infty\},\, f(x)=F(x_0,x),\, x\in X,$ to obtain an element    $z\in X$ satisfying the conditions (i) and (ii) from that theorem.

  By (i) and (E1), $\, z\in S(x_0)$.

  By (ii), for every $x\in X\smallsetminus\{z\}$ there exists $d_x\in D$ such that
  $$
  F(x_0,z)<F(x_0,x)+d_x(x,z)\,.$$

  But then,  by (E2),
  \begin{align*}
  F(x_0,z)&<F(x_0,x)+d_x(x,z)\\
  &\le F(x_0,z)+F(z,x)+d_x(x,z)\,,
  \end{align*}
  yielding \eqref{eq1.OT-Fqg}.
\end{proof}

Taking into account the implication from \eqref{eq.E3}, one obtains the following consequence of Theorem \ref{t.OT-Fqg}.
\begin{corol}
The conclusion of Theorem \ref{t.OT-Fqg} remains valid if one supposes that the conditions (E1)--(E4) are satisfied.
  \end{corol}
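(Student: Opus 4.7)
The plan is to observe that this corollary is a direct specialization of Theorem \ref{t.OT-Fqg}, obtained by strengthening hypothesis (E3a) to (E3). Since the theorem's hypotheses are (E1), (E2), (E3a), (E4), it suffices to verify that the stronger condition (E3) implies (E3a), and then invoke the theorem without modification.

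First I would recall the implication $\eqref{eq.E3}$, namely (E3) $\Rightarrow$ (E3a), which was already established in the paragraph preceding Theorem \ref{t.OT-Fqg}. The argument there shows: given a sequence $(y_n)$ in $S(x)$ that is $\mathcal{U}$-convergent to $y$, and given any $d \in D$ with $d' \in D$ chosen according to (QF3), one uses the sequential $\mathcal{U}$-lower semicontinuity of $F(x,\cdot)$ together with item 2 of Proposition \ref{p2.top-Fqg} (applied to $d'(\cdot,x)$, which controls $d(\cdot,x)$ via the triangle-type inequality (QF3)) to obtain
\[
F(x,y) + d(y,x) \le \liminf_n \bigl[F(x,y_n) + d'(y_n,x)\bigr] \le 0,
\]
so $y \in S(x)$. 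Hence $S(x)$ is sequentially $\mathcal{U}$-closed, which is exactly (E3a).

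Having verified (E3a), all four hypotheses of Theorem \ref{t.OT-Fqg} are met. Applying that theorem to $F$ and to the distinguished point $x_0 \in X$ whose existence is guaranteed by (E4) yields an element $z \in S(x_0)$ satisfying
\[
\forall x \in X \smallsetminus \{z\},\; \exists d_x \in D \;\text{ s.t. }\; F(z,x) + d_x(x,z) > 0,
\]
which is precisely the conclusion of Theorem \ref{t.OT-Fqg}, and therefore of the corollary.

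There is no genuine obstacle here: the corollary is a convenience restatement under a more familiar and often easier-to-check hypothesis (lower semicontinuity of $F(x,\cdot)$) in place of the closedness of the level sets $S(x)$. The only subtlety worth a sentence is the use of the auxiliary quasi-pseudometric $d'$ from (QF3), which is why one cannot simply cite lower semicontinuity of $d(\cdot,x)$ directly — but this subtlety is already absorbed into the proof of $\eqref{eq.E3}$.
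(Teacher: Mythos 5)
Your proposal is correct and follows exactly the paper's route: the corollary is obtained by noting the implication \eqref{eq.E3}, i.e. (E3) $\Rightarrow$ (E3a), established in the paragraph before Theorem \ref{t.OT-Fqg}, and then applying that theorem verbatim. Nothing further is needed.
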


\begin{remark}This  result and its equivalence to EkVP was proved by Oettli and Th\'era \cite{oetli-thera93} in complete metric spaces, by Hamel  \cite{hamel05} in uniform spaces,
by Fang \cite{fang96} and Hamel \cite{hamel01} in $F$-gauge spaces
and by Al-Homidan, Ansari and Kassay \cite{kassay19} in quasi-metric spaces.\end{remark}

\begin{remark}\label{re.Ek-OT} The Ekeland Variational principle   (Theorem \ref{t.OT-Fqg}) can be obtained from Theorem \ref{t.OT-Fqg} by putting  $F(x,y)=f(y)-f(x)$   (see Theorem \ref{t.Ek-OT} for details).
\end{remark}

\begin{remark}
Theorems \ref{t.Car-Fqg},   \ref{t.Taka-Fqg},  \ref{t.Arut-Fqg}  and \ref{t.OT-Fqg} also admit extensions in the spirit of Corollary \ref{c1.EkVP-Fqg}.
The exact statements are left  as an easy exercise.\end{remark}

   \section{The equivalence of principles}\label{S.equiv}
We prove   the equivalence between Ekeland, Takahashi and Caristi principles.
\begin{theo} \label{t1.wEk-Taka-Car}
Let  $D$ be an $F$-quasi-gauge on a set $X$ generating a $T_1$ quasi-uniformity $\mathcal U=\mathcal U_D$ and
  $\varphi:X\to\mathbb{R}\cup\{\infty\}$  a proper bounded below   function.
 Then the following statements are equivalent.
 \begin{enumerate}\item[\rm 1.] {\rm (Ekeland)}\;There exists $z\in X$ such that
 \begin{equation}\label{eq1.Ek3-Fqg}
\forall x\in X\setminus \{z\},\; \exists  d_x\in D,\quad \varphi(z)<\varphi(x)+ d_x(x,z)\,.
\end{equation}
  \item[\rm 2.] {\rm (Caristi-Kirk)}\; Any mapping $f:X\to X$ satisfying the condition
     \begin{equation}\label{eq1.Car3-Fqg}
\forall d\in D,\; \forall x\in X,\quad  \varphi(f(x))+d(f(x),x)\le \varphi(x)\,,
\end{equation}
  has a fixed point in $X$.
 \item[\rm 3.]  {\rm (Takahashi)}\;If for every  $x\in X$   with $\varphi(x)>\inf \varphi(X)$ there exists $y\in X\smallsetminus\{x\}$ such that
\begin{equation}\label{eq1.Taka3-Fqg}
\forall d\in D,\quad \varphi(y)+ d(y,x)\le \varphi(x)\,,\end{equation}
then there exists $ z\in X$ with $\varphi(z)=\inf \varphi(X)$.
\end{enumerate}\end{theo}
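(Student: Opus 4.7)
The plan is to prove the cycle of implications $(1)\Rightarrow (2)\Rightarrow (3)\Rightarrow (1)$ by elementary manipulations. No completeness assumption is needed here: the three statements are purely logical reformulations of one another for the given $\varphi$, and the $T_1$ hypothesis will enter only in the final step. For $(1)\Rightarrow (2)$, I take $z$ as in \eqref{eq1.Ek3-Fqg} and let $f:X\to X$ satisfy \eqref{eq1.Car3-Fqg}. If $f(z)\ne z$, then applying \eqref{eq1.Ek3-Fqg} to $x=f(z)$ produces some $d\in D$ with $\varphi(z)<\varphi(f(z))+d(f(z),z)$, in direct contradiction with \eqref{eq1.Car3-Fqg} at $x=z$ for this same $d$. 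Hence $f(z)=z$.

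For $(2)\Rightarrow (3)$ I argue by contradiction. Assume the hypothesis of (3) holds but $\inf\varphi(X)$ is not attained. Since $\varphi$ is proper and bounded below, $\inf\varphi(X)\in\mathbb{R}$ and $\varphi(x)>\inf\varphi(X)$ for every $x\in X$ (including points with $\varphi(x)=\infty$). The Takahashi hypothesis then associates to each $x$ some $y_x\in X\setminus\{x\}$ with $\varphi(y_x)+d(y_x,x)\le\varphi(x)$ for all $d\in D$. By the Axiom of Choice the map $f:X\to X$, $f(x):=y_x$, is well-defined, satisfies \eqref{eq1.Car3-Fqg}, and has no fixed point, contradicting (2).

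The most delicate step is $(3)\Rightarrow (1)$, again by contradiction. The negation of (1) reads: for every $z\in X$ there exists $x\in X\setminus\{z\}$ with $\varphi(x)+d(x,z)\le\varphi(z)$ for all $d\in D$. This is strictly stronger than the hypothesis of (3), since the side condition $\varphi(z)>\inf\varphi(X)$ is dropped, so (3) supplies $z^{*}\in X$ with $\varphi(z^{*})=\inf\varphi(X)$. Applying the negation of (1) at $z^{*}$ yields $x\ne z^{*}$ with $\varphi(x)+d(x,z^{*})\le\varphi(z^{*})=\inf\varphi(X)$ for every $d\in D$, which forces $\varphi(x)=\inf\varphi(X)$ and hence $d(x,z^{*})=0$ for all $d\in D$. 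By Proposition \ref{p2.top-Fqg}, item 4, the $T_1$ assumption on $\tau(\mathcal U)$ then yields $x=z^{*}$, a contradiction.

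The main obstacle is not computational but conceptual: one must spot that the negation of (1) is a strict strengthening of the hypothesis of (3), and that the $T_1$ separation is exactly what forbids the degenerate configuration $d(x,z^{*})=0$ for every $d\in D$ with $x\ne z^{*}$. Without the $T_1$ assumption, the closing implication $(3)\Rightarrow (1)$ would break down at precisely this step, which is why this hypothesis appears explicitly in the statement.
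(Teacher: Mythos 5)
Your proof is correct, and it is organized differently from the paper's. The paper uses Ekeland as a hub, proving $1\Leftrightarrow 2$ and $1\Leftrightarrow 3$ separately, with the reverse directions done as contrapositives ($\neg 1\Rightarrow\neg 2$ and $\neg 1\Rightarrow\neg 3$); you instead close a cycle $1\Rightarrow 2\Rightarrow 3\Rightarrow 1$. Your first step coincides with the paper's. Your $2\Rightarrow 3$ is an implication the paper never proves directly: you turn the Takahashi selection $x\mapsto y_x$ into a fixed-point-free Caristi map, which is essentially the paper's $\neg 1\Rightarrow\neg 2$ construction transplanted to a different spot in the logical graph. The most visible divergence is in $3\Rightarrow 1$: the paper argues $\neg 1\Rightarrow\neg 3$ by invoking Proposition \ref{p1.ord-Fqg}(b) (strict decrease of $\varphi$ under $T_1$) to conclude that no minimizer can exist, whereas you let statement 3 hand you a minimizer $z^{*}$ and then rule out the point $x\ne z^{*}$ with $d(x,z^{*})=0$ for all $d\in D$ via the $T_1$ characterization of Proposition \ref{p2.top-Fqg}, item 4. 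These two uses of $T_1$ are the same underlying fact (existence of some $d$ with $d(x,z^{*})>0$ when $x\ne z^{*}$), so the mathematical content is equivalent; what your cyclic arrangement buys is one fewer contrapositive and an explicit Caristi-to-Takahashi implication, at the cost of making the equivalence of any two of the three statements depend on the third link of the chain. Your observation that the negation of (1) strictly strengthens the hypothesis of (3) by dropping the side condition $\varphi(z)>\inf\varphi(X)$ is exactly the point the paper also relies on, just stated more explicitly in your version.
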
\begin{proof}
  1\;$\Rightarrow\;$2.\;

  Let $z\in X$ be such that   \eqref{eq1.Ek3-Fqg} holds. Suppose that  $f:X\to X$ satisfies \eqref{eq1.Car3-Fqg}. Then
  \begin{equation}\label{eq1b.Car3-Fqg}
  \forall d\in D,\;\; \varphi(f(z))+d(f(z),z)\le \varphi(z)\,.\end{equation}
If $f(z)\ne z$, then there exists $d_z\in D$ such that
$$
\varphi(f(z))+d_z(f(z),z)>\varphi(z)\,,
$$
in contradiction to \eqref{eq1b.Car3-Fqg}. Consequently,
 we must have  $f(z)=z$.\smallskip

  $\neg$1\;$\Rightarrow\,\neg 2$.\;

  If \eqref{eq1.Ek3-Fqg} does not hold, then
  $$
  \forall z\in X,\;\exists y_z\in X\smallsetminus\{z\}\; \mbox{ s.t. }\; \forall d\in D,\quad \varphi(y_z)+ d(y_z,z)\le \varphi(z)\,.$$

  Defining $f:X\to X$ by $f(z)=y_z,\, z\in X$, it follows that $f$ satisfies \eqref{eq1.Car3-Fqg} but has no fixed point. \smallskip

  1\;$\Rightarrow\;$3.\;

  Let $z\in X$ be such that   \eqref{eq1.Ek3-Fqg} holds. If $\varphi(z)>\inf\varphi(X)$, then, by \eqref{eq1.Taka3-Fqg}, there exists
$  y\ne z$  such that
$$
\forall d\in D,\; \; \varphi(y)+ d(y,z)\le \varphi(z)\,,$$
in contradiction to \eqref{eq1.Ek3-Fqg}.  It follows $\varphi(z)=\inf \varphi(X)$.\smallskip

 $\neg$1\;$\Rightarrow \neg 3$.\;

 Supposing that  \eqref{eq1.Ek3-Fqg} fails, then
 $$
    \forall z\in X,\; \exists y_z\ne z,\; \mbox{ s.t. }\; \forall d\in D,\; \varphi(y_z)+d(y_z,z)\le \varphi(z).$$

    This shows that $\varphi$ satisfies \eqref{eq1.Taka3-Fqg} and that $z\le_\varphi y_z$ with respect to the order given by \eqref{def1.ord-Fqg}.

     Since the topology $\tau(\mathcal U)$ is $T_1$,    Proposition \ref{p1.ord-Fqg}   implies that the function $\varphi$ is strictly decreasing, so that  $\varphi(y_z)<\varphi(z).$

    Consequently, for every
    $z\in X$ there exists $y_z\in X$ with $\varphi(y_z)<\varphi(z)$, implying that there is no $z\in X$  with $\varphi(z)=\inf\varphi(X)$, i.e. 3 fails.
\end{proof}
\begin{remark}
The version of Ekeland Variational Principle formulated in the first statement of Theorem \ref{t1.wEk-Taka-Car} is called sometimes the weak form of
Ekeland Variational Principle. Oettli-Th\'era result is equivalent to the full version of Ekeland Variational Principle (Theorem \ref{t1.EkVP-Fqg}).
I don't know if Arutyunov's result implies Ekeland Variational Principle (Theorem \ref{t1.EkVP-Fqg}).
\end{remark}

As was noticed in Remark \ref{re.Ek-OT},  Ekeland and Oettli-Th\'era principles are actually equivalent.
\begin{theo}\label{t.Ek-OT}
Theorems \ref{t1.EkVP-Fqg}  and \ref{t.OT-Fqg} are equivalent.
\end{theo}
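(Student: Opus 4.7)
The equivalence has two directions. One direction, Theorem~\ref{t1.EkVP-Fqg}~$\Rightarrow$~Theorem~\ref{t.OT-Fqg}, has already been carried out inside the proof of Theorem~\ref{t.OT-Fqg}, where the Ekeland principle was applied to the function $f(\cdot):=F(x_0,\cdot)$. So the plan is to establish the reverse implication, following the idea recorded in Remark~\ref{re.Ek-OT}: set $F(x,y):=f(y)-f(x)$ and deduce the Ekeland conclusions from the Oettli--Th\'era conclusion.

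Assuming the hypotheses of Theorem~\ref{t1.EkVP-Fqg}, I would define $F:X\times X\to\mathbb{R}\cup\{\infty\}$ by $F(x,y):=f(y)-f(x)$ whenever $f(x)<\infty$, and on the complement of $\dom f$ by setting $F(x,x):=0$ and $F(x,y):=\infty$ for $y\ne x$. The next step is to verify the conditions (E1)--(E4). Condition (E1) is built into the definition, and (E2) reduces to a short case analysis: on $\dom f\times\dom f\times\dom f$ it is a telescoping equality, while any argument outside $\dom f$ forces an $\infty$ on the right-hand side and makes the inequality trivial. Condition (E3a) unpacks as $S(x)=S_f(x)$ when $f(x)<\infty$, which is sequentially $\mathcal U$-closed by hypothesis (a) of Theorem~\ref{t1.EkVP-Fqg}, and $S(x)=\{x\}$ when $f(x)=\infty$, which is closed because $\tau(\mathcal U)$ is $T_1$. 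Finally, (E4) follows from $x_0\in\dom f$, since $F(x_0,y)\ge\inf f(X)-f(x_0)>-\infty$.

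Applying Theorem~\ref{t.OT-Fqg} to this $F$ and the point $x_0$ yields $z\in S(x_0)$ such that for every $x\in X\setminus\{z\}$ there is $d_x\in D$ with $F(z,x)+d_x(x,z)>0$. The membership $z\in S(x_0)$ unpacks to $f(z)+d(z,x_0)\le f(x_0)$ for all $d\in D$, which is conclusion (i) of Theorem~\ref{t1.EkVP-Fqg} and in particular forces $z\in\dom f$. Using $f(z)<\infty$, the inequality $F(z,x)+d_x(x,z)>0$ rewrites as $f(z)<f(x)+d_x(x,z)$, which is conclusion (ii). This closes the equivalence.

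I expect the main technical issue to be the bookkeeping around $\infty$-values: $F$ is required to map into $\mathbb{R}\cup\{\infty\}$ and to satisfy the subadditivity (E2) globally, whereas $f(y)-f(x)$ is ill-defined when $f(x)=\infty$. The convention above is the standard fix, and it is precisely the $T_1$ separation of $\tau(\mathcal U)$ that secures the closedness of the singleton $\{x\}$ in the pathological case $f(x)=\infty$; once this is in place, the remaining translation between the two statements is routine.
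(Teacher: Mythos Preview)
Your proof is correct and follows essentially the same route as the paper: the forward implication is the proof of Theorem~\ref{t.OT-Fqg} itself, and the reverse implication proceeds by setting $F(x,y)=f(y)-f(x)$ and reading off (i) and (ii) from $z\in S(x_0)$ and \eqref{eq1.OT-Fqg}. The paper's argument is identical in outline but simply writes $F(x,y)=f(y)-f(x)$ without commenting on the $\infty-\infty$ issue; your explicit convention on $X\setminus\dom f$, the case analysis for (E2), and the observation that $S(x)=\{x\}$ is closed by $T_1$ when $f(x)=\infty$ are a genuine tightening of that step rather than a different approach.
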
\begin{proof}

  Theorem \ref{t1.EkVP-Fqg} \;$\Rightarrow$\;   Theorem \ref{t.OT-Fqg}.

  The proof given to Theorem  \ref{t.OT-Fqg} shows the validity of this implication.\smallskip

  Theorem \ref{t.OT-Fqg} \;$\Rightarrow$\;   Theorem  \ref{t1.EkVP-Fqg} .

Suppose that the hypotheses of Theorem \ref{t1.EkVP-Fqg} are fulfilled. Taking  $F(x,y)=f(y) -f(x)$ for $x,y\in X$,  it follows that
\begin{equation}\label{eq.Ek-OT}
S(x)=S_f(x)\,,
\end{equation}
for all $x\in X$. Indeed,
\begin{align*}
S(x)&=\{y\in X: F(x,y)+d(x,y)\le 0,\; \forall d\in D\}\\
&=\{y\in X:f(y)+d(x,y)\le f(x),\; \forall d\in D\}=S_f(x)\,.
\end{align*}

It is easy to check that the conditions (E1), (E2), (E4) from \eqref{def.eq-fcs} and (E3a) are satisfied by $F$.  Then there exists $z\in S(x_0)$ satisfying  \eqref{eq1.OT-Fqg}. By \eqref{eq.Ek-OT}, $z\in S(x_0)$
is equivalent to (i) and   \eqref{eq1.OT-Fqg}   is equivalent to (ii) from Theorem   \ref{t1.EkVP-Fqg}.
\end{proof}

\section{Some classes of functions}\label{S.fcs}

In order to obtain stronger maximality or minimality principles some authors considered more general classes of functions than the lsc ones.
 A discussion on  three of these  versions  is done in the paper \cite{bcs18}.  We shall present now these conditions.

\begin{defi}
 Let $(X,d)$ be a quasi-metric space and $\varphi: X \to \mathbb{R}\cup\{+\infty\}$ be an extended real-valued function on $X$.
 \begin{itemize}\item
 The function $\varphi$ is said to be {\it strict-decreasingly lsc}  (see \cite{bcs18}) if for every $d$-convergent sequence $(x_n)$ such that the sequence $(\varphi(x_n))$ is strictly decreasing, one has
\begin{equation}\label{def.lsc-mon}
\varphi(y)\le  \lim_{n\to\infty} \varphi(x_n) , \; \forall\; y \in \overrightarrow{\{x_n\}},
\end{equation}
where $\overrightarrow{\{x_n\}} = \{y \in X:  \lim_{n\to\infty} d(y,x_n) = 0\}$ is the collection of $d$-limits of the sequence $\{x_n\}$.
\item
A related notion is that of  {\it decreasingly  lsc function} considered by  Kirk and Saliga  \cite{kirk-sal01} (called by them  \emph{lower semicontinuity from above})   meaning that \eqref{def.lsc-mon} holds for every  sequence $(x_n)$  $d$-convergent to $y$ and such that $\varphi(x_{n+1})\le\varphi(x_n),\forall n\in\mathbb{N}.$

\item  Following  \cite{karap-romag15} we call the function  $\varphi$  {\it nearly lsc} if whenever a sequence $(x_n)$ of pairwise distinct points in $X$ is $d$-convergent to   $x$, then $\varphi(x) \leq \liminf_{n\to\infty} \varphi(x_n)$.\end{itemize}

 The corresponding concepts for the conjugate quasi-metric $\bar d$  can be defined analogously.
\end{defi}

 A characterization on nearly lsc functions on quasi-pseudometric spaces in terms of the monotony with respect to the specialization order is given in \cite{cobz19} -- a function is lsc if and only if  it is nearly lsc and monotone with respect to the specialization order.

 The specialization $\le_\tau$ order in a topological space $(T,\tau)$ is given by
 $$
 s\le_\tau t\iff s\in \overline{\{t\}}\,,$$
 where $\overline S$ denotes the closure of a subset $S$ of $T$.

 In general, it is  a preorder, i.e.   a  reflexive and transitive relation. It is an order (also antireflexive) if and only if the topology $\tau$ is $T_0$.
    If the topology $\tau$ is $T_1$, then$ \overline{\{t\}}=\{t\}$ for all $t\in T$, so that  the specialization order becomes equality and   the nearly lower semicontinuity goes back to  lower semicontinuity. For other topological properties related to the specialization order in $T_0$ topological spaces,  see \cite[Section 4.2]{Larrecq}.

It is worth mentioning that the class of strict-decreasingly lsc functions is broader than the union of that of the  decreasingly lsc functions and that of the   nearly lower semicontinuous functions.
 Since the strict-$\varphi$-decreasing requirement of the sequence $(x_n)$ implies that $x_n\neq x_m$ for all $n\neq m$, every nearly lower semicontinuous function is strict-$\varphi$-decreasingly lsc.


Let us provide some examples of  functions satisfying these conditions.

\begin{example} {\bf(decreasingly  lsc, strict-decreasingly  lsc  and lsc functions).}

 {\rm(a)} Consider the  functions $\varphi,\,\varphi_1 : (\mathbb{R},|\cdot|) \rightarrow (\mathbb{R},|\cdot|)$ given by
 \begin{eqnarray*}
 \varphi(x) := \begin{cases}
 x & \mbox{\rm if }\; x \geq 0\\
 -1 & \mbox{\rm if }\; x < 0\\
 \end{cases}\quad\mbox{and}\quad
  \varphi_1(x) := \begin{cases}
 -x & \mbox{\rm if }\; x > 0\\
 1 & \mbox{\rm if }\; x \le 0\,.\\
 \end{cases}
 \end{eqnarray*}
 Then $\varphi$ is strict-decreasingly lsc at $0$ , but not decreasingly lsc (and so not lsc) at 0,  since    the  sequence   $x_n = -1/n,\,n\in\mathbb{N},$ is convergent to 0 and  $\,\varphi(x_n) = -1$ for all $n\in \mathbb{N}$, so that $\lim_{n\to\infty} \varphi(x_n) = -1 < 0 = \varphi(0)$.

 The function $\varphi_1$ is decreasingly lsc at 0, but not lsc. Indeed,  there are no sequences $x_n\to 0$ with $\{\varphi_1(x_n)\}$ strictly decreasing. If $x_n\to 0$ and $\varphi_1(x_{n+1})\le\varphi_1(x_n)$ for all $n$, then $\varphi_1(x_n)=1$ for sufficiently large $n$, so that $\lim_{n\to{\infty}}\varphi_1(x_n)=1=\varphi(0)$. The function $\varphi_1$ is not lsc at 0 because $\lim_{x\searrow 0}\varphi_1(x)=0<1=\varphi_1(0)$.
 \vspace*{.05in}

{\rm (b)} The   function $\varphi(x) = x$ is not strict-decreasingly $q_4$-lower semicontinuous in the quasi-metric space $([0,1],q_4)$,  where $q_4$ is defined by
$$
q_4(x,y) = \begin{cases}
y - x &\mbox{ if } \; x\leq y;\\
1 + y-x &\mbox{ if } \; x > y \; \mbox{ but }\; (x,y) \neq (1,0);\\
1 &\mbox{ if } \; (x,y) = (1,0).
\end{cases}
$$

In this space, the strict-$\varphi$-decreasingly   sequence $(x_n)$ with $x_n = \frac{1}{n}$ has two  limits $x_\ast = 0$ and $y_\ast = 1$, because
$q_4(0,\frac1n)=q_4(1,\frac1n)=\frac1n\to 0.$  Since
$$
\varphi(1)=1>0=\lim_{n\to\infty} \varphi(x_n) \,,
$$
the function  $\varphi$ is not strict-decreasingly lower semicontinuous in $(X,q_4)$. Incidentally, this furnishes an example of a  $T_1$-quasi-metric space  where the uniqueness condition for  limits does not hold. It is easy to check that the sequence $x_n=1/n$ is also right $K$-Cauchy.

The quasi-metric $q_4$ was introduced in  \cite[Example~3.16]{lwa11}.

\vspace*{.05in}
 {\rm (c)}   The everywhere discontinuous function $\varphi(x) = 0$ for $x \in \mathbb{Q}$ and $\varphi(x) = 1$ for $x \in \mathbb{R}\setminus\mathbb{Q}$,
defined on $(\mathbb{R},|\cdot|)$, is strict-decreasingly lower semicontinuous because there are no strictly $\varphi$-decreasing sequences. For every  $x\in\mathbb{R}\setminus\mathbb{Q}$ the function $\varphi$ is not lower semicontinuous at $x$, because
$\varphi(x) = 1 > 0 = \liminf_{u\to{x}} \varphi(u)$.   Also, for every $x \in \mathbb{Q}$ it is not upper semicontinuous at $x$.
\end{example}

Arutyunov and Gel'man  \cite{arut-gel09} and Arutyunov \cite{arut15} introduced another class of functions   on a metric space $(X,d)$, namely functions satisfying condition \eqref{eq1a.Arut2} from below (with convergence with respect to $d$ instead of the $\tau(\mathcal U)$-convergence) and     proved the existence of minima for functions in this class defined on complete metric spaces. For further results, see \cite{arut-zhuk19a} and \cite{arut-zhuk19b}.

We show that these results can be extended to quasi-uniform spaces.

\begin{theo}[Arutyunov \cite{arut15}]\label{t2.Arut} Let $P$ be a family of quasi-pseudometrics on a set $X$ generating a quasi-uniformity $\mathcal U=\mathcal U_P$ such that   $(X,\mathcal U)$ is sequentially right $K$-complete.  Let $\eta:\mathbb{R}_+\to \mathbb{R}_+$ be a usc function such that
\begin{equation}\label{eq1.eta}
\eta(t)<t\quad\mbox{for all}\quad t>0\,.\end{equation}

Suppose that $f:X\to[0,\infty]$ is a function satisfying the conditions:
\begin{itemize}\item[\rm (a)] for every sequence $(x_n)$ in $X$ and every  $x\in X$
\begin{equation}\label{eq1a.Arut2}
x_n\xrightarrow{\tau(\mathcal U)} x\;\mbox{ and }\; f(x_n)\to 0\;\Rightarrow f(x)=0;\end{equation}
\item[\rm (b)] there exists $\gamma>0$ such that for every $x\in X$ with $f(x)>0$ there exists $x'\in X$ such that
\begin{equation}\label{eq2.Arut2}\begin{aligned}
  {\rm (i)}\;\; &\forall d\in P,\;\; f(x')+\gamma d(x',x)\le f(x),\\
   {\rm (ii)}\;\;&f(x')\le \eta(f(x)).
\end{aligned}\end{equation}
\end{itemize}

Then, for every $x_0\in \dom f$, there exists $\bar x\in X$ such that
\begin{equation}\label{eq3.Arut2}\begin{aligned}
  {\rm (i)}\;\; &f(\bar x)=0=\inf f(X);\\
  {\rm (ii)}\;\; &\forall d\in P,\;\; d(\bar x,x_0)\le\frac{f(x_0)}\gamma\,.
\end{aligned}\end{equation}\end{theo}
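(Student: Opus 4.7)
The plan is to mimic the Caristi-type iteration used in Theorem \ref{t.Arut-Fqg}, but to replace the lower semicontinuity assumption with the limiting hypothesis (a) and to exploit the upper semicontinuous gauge $\eta$ to force the values $f(x_n)$ to tend to $0$. Fix $x_0\in\dom f$; if $f(x_0)=0$, then $\bar x:=x_0$ trivially satisfies both conclusions. Otherwise, using (b) together with the Axiom of Dependent Choice, I would construct inductively a sequence $(x_n)_{n\ge 0}$ starting at $x_0$ so that, whenever $f(x_n)>0$,
\[
\forall d\in P,\quad f(x_{n+1})+\gamma\,d(x_{n+1},x_n)\le f(x_n), \qquad\text{and}\qquad f(x_{n+1})\le \eta(f(x_n)).
\]
If at some step $f(x_n)=0$, the construction terminates with $\bar x:=x_n$; otherwise an infinite sequence is obtained.

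Next I would show that $t_n:=f(x_n)$ decreases to $0$. By the first inequality above (applied with any $d\in P$, which gives $\gamma d\ge 0$), the sequence $(t_n)$ is nonincreasing, so $t_n\downarrow t^*\ge 0$. If one had $t^*>0$, then the upper semicontinuity of $\eta$ at $t^*$ would give $\limsup_n\eta(t_n)\le\eta(t^*)$, and combining with $t_{n+1}\le \eta(t_n)$ one would obtain
\[
t^*=\lim_n t_{n+1}\le \limsup_n \eta(t_n)\le \eta(t^*) < t^*,
\]
contradicting \eqref{eq1.eta}. Hence $t^*=0$.

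Now I would prove that $(x_n)$ is right $\mathcal U_P$-$K$-Cauchy. For any fixed $d\in P$ and all $n,k\in\mathbb{N}$, iterating the triangle inequality and telescoping yields
\[
\gamma\,d(x_{n+k},x_n)\le \gamma\sum_{i=n}^{n+k-1} d(x_{i+1},x_i)\le \sum_{i=n}^{n+k-1}\bigl(f(x_i)-f(x_{i+1})\bigr)= f(x_n)-f(x_{n+k})\;\longrightarrow\; 0.
\]
Sequential right $K$-completeness then produces $\bar x\in X$ with $x_n\xrightarrow{\tau(\mathcal U)}\bar x$. Since $f(x_n)\to 0$, hypothesis (a) forces $f(\bar x)=0$, and together with $f\ge 0$ this gives conclusion (i). For (ii), specializing the telescoping bound to $n=0$ gives $\gamma\,d(x_k,x_0)\le f(x_0)$ for every $k$; then the triangle inequality $d(\bar x,x_0)\le d(\bar x,x_k)+d(x_k,x_0)$ combined with $d(\bar x,x_k)\to 0$ (Proposition \ref{p2.sep-P}) delivers $d(\bar x,x_0)\le f(x_0)/\gamma$ for every $d\in P$.

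The main obstacle, and the only place where the precise shape of the $\eta$-hypothesis enters, is the step $t_n\to 0$: here the interplay between the upper semicontinuity of $\eta$ and the strict inequality $\eta(t)<t$ for $t>0$ substitutes for the closed sublevel set assumption used in Theorem \ref{t.Arut-Fqg}. Everything else is a routine Caristi chain argument carried out ``coordinatewise'' across the quasi-pseudometrics in $P$, and the passage to the topological limit $\bar x$ uses only condition (a), which is strictly weaker than sequential $\mathcal U$-lower semicontinuity.
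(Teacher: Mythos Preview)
Your argument is correct and follows essentially the same route as the paper: iterate condition (b) to build a decreasing Caristi chain, use the upper semicontinuity of $\eta$ together with $\eta(t)<t$ to force $f(x_n)\to 0$, telescope to get a right $K$-Cauchy sequence, pass to the limit $\bar x$ via completeness, and conclude $f(\bar x)=0$ from hypothesis (a). The only cosmetic difference is in deriving the distance bound \eqref{eq3.Arut2}(ii): the paper invokes the $\tau(\mathcal U)$-lower semicontinuity of $d(\cdot,x_0)$ (Propositions \ref{p.quc-qm} and \ref{p.qu-qms}) to pass the inequality $\gamma d(x_k,x_0)\le f(x_0)$ to the limit, whereas you use the triangle inequality $d(\bar x,x_0)\le d(\bar x,x_k)+d(x_k,x_0)$ directly; since the cited lsc is itself proved by exactly this triangle-inequality argument, the two are the same.
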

\begin{proof}
Observe first that, by \eqref{eq1.eta},
\begin{equation}\label{eq2.eta}
\alpha\le \eta(\alpha)\;\Rightarrow\; \alpha =0\,,
\end{equation}
for each $\alpha\ge 0.$

Let $x_0\in \dom f$. If $f(x_0)=0$, then we are done (with $\bar x=x_0$).

Suppose that $f(x_0)>0$. Then by the condition (b) of the theorem, there exists $x_1\in X$ such that
\begin{equation*} \begin{aligned}
  {\rm (i)}\;\; &\forall d\in P,\;\;  \gamma d(x_1,x_0)\le f(x_0)-f(x_1)),\\
   {\rm (ii)}\;\;&f(x_1)\le \eta(f(x_0)).
  \end{aligned} \end{equation*}

   By (ii) $x_1\in \dom f$. If $f(x_1)=0$ then take $\bar x=x_1$ and stop.

   Suppose that we have found $x_0,x_1,\dots,x_n$ in $\dom f$ such that
   \begin{equation}\label{eq4.Arut2} \begin{aligned}
  {\rm (i)}\;\; &\forall d\in P,\;\;  \gamma d(x_{k+1},x_k)\le f(x_k)-f(x_{k+1}),\\
   {\rm (ii)}\;\;&f(x_{k+1})\le \eta(f(x_k)),
  \end{aligned} \end{equation}
  for $k=0,1,\dots,n-1$ and $f(x_k)>0$ for $k=0,1,\dots,n.$

 Then choose $x_{n+1}\in X$ such that
 \begin{equation*}\begin{aligned}
  {\rm (i)}\;\; &\forall d\in P,\;\;  \gamma d(x_{n+1},x_n)\le f(x_n)-f(x_{n+1})),\\
   {\rm (ii)}\;\;&f(x_{n+1})\le \eta(f(x_n)).
   \end{aligned} \end{equation*}

   If  $f(x_{n+1})=0$, then take $\bar x=x_{n+1}$. Then $f(\bar x)=0=\inf f(X)$ and, by \eqref{eq4.Arut2}.(i),
   \begin{align*}
     \gamma d(\bar x,x_0)&\le \gamma[d(x_{n+1},x_n)+\dots+d(x_1,x_0)]\\
     &\le f(x_n)-f(x_{n+1})+\dots +f(x_0)-f(x_1)\\
     &=f(x_0)-f(x_{n+1})=f(x_0)\,,
   \end{align*}
   for all $ d\in P$, implying \eqref{eq3.Arut2}.(ii).

    If $f(x_{n+1})>0$ at any step,  then we find a sequence $(x_k)_{k\in\mathbb{N}_0}$ in $\dom f,\,$  satisfying the conditions \eqref{eq4.Arut2} for all $k\in\mathbb{N}_0$.

    Observe that, by \eqref{eq4.Arut2}.(i), the sequence $(f(x_k))$ is decreasing, so that there exists $\lim_{k\to\infty}f(x_k)=\alpha\ge 0.$
    By the usc of the function $\eta$ and \eqref{eq4.Arut2}.(ii),
    $$
    \alpha\le\limsup_k\eta(f(x_k))\le\eta(\alpha)\,,$$
    so that, by \eqref{eq2.eta}, $\alpha =0$, that is,
    $$\lim_{k\to\infty}f(x_k)= 0\,.$$

    Now, by \eqref{eq4.Arut2}.(i),  for every $d\in P$,
    \begin{align*}
      \gamma d(x_{n+k},x_{n})&\le \gamma[d(x_{n+k},x_{n+k-1})+ \dots+d(x_{n+1},x_{n})]\\
      &\le f(x_{n+k-1})-f(x_{n+k})+\dots+f(x_{n})-f(x_{n+1})\\
      &=f(x_n)-f(x_{n+k})\,,
    \end{align*}
   so that
    \begin{equation}\label{eq5.Arut2}
   \forall d\in P,\;\;   \gamma d(x_{n+k},x_{n})\le f(x_n)-f(x_{n+k})\,,\end{equation}
    for all $n\in\mathbb{N}_0$ and $k\in\mathbb{N}.$

    Since the sequence $(f(x_k))_{k\in\mathbb{N}_0}$ is Cauchy, for every $\varepsilon>0$ there exists $n_\varepsilon\in\mathbb{N}_0$ such that
    $$
    0\le  f(x_n)-f(x_{n+k})<\gamma \varepsilon\,,$$
    for all $n\ge n_\varepsilon$ and all $k\in\mathbb{N}.$  Taking into account \eqref{eq5.Arut2}, it follows
    $$
    d(x_{n+k},x_n)<\varepsilon\,,$$
    for all $n\ge n_\varepsilon$,  $k\in\mathbb{N}$ and all $d\in P$. This shows that the sequence $(x_n)$ is right $K$-Cauchy, and so, by the right $K$-completeness
    of the quasi-uniform space $(X,\mathcal U)$, it is $\tau(\mathcal U)$-convergent to some $\bar x \in X$.

    Now $$x_n\xrightarrow{\tau(\mathcal U)} \bar x,   \;\; f(x_n)\to 0\,,$$ and \eqref{eq1a.Arut2} imply $f(\bar x)=0=\inf f(X)$.

    By Propositions \ref{p.qu-qms} and \ref{p.quc-qm}, the function $d(\cdot,x_0)$ is $\tau(\mathcal U)$-lsc for every $d\in P$, so that, by \eqref{eq5.Arut2} (with $n=0$ and $k=n$),
    $$\gamma d(x_n,x_0)\le f(x_0)-f(x_n)\,,$$
    whence
    $$
    \gamma d(\bar x,x_0)\le \gamma \liminf_{n}d(x_n,x_0)\le f(x_0)\,,$$
    which implies \eqref{eq3.Arut2}.(ii).
\end{proof}

As a consequence of Theorem \ref{t2.Arut}, one obtains the following result proved by Arutyunov and Gel'man \cite{arut-gel09}.
\begin{corol}
 Let $X,\,P$ and $\mathcal U=\mathcal U_P$ be as in Theorem \ref{t2.Arut}. Suppose that the proper function $f:X\to[0,\infty]$ satisfies
condition \eqref{eq1a.Arut2} and there exist $\lambda>0$ and $0<\mu<1$ such that for every $x\in X$ there exists $x'\in X$ such that
\begin{equation}\label{eq2.Arut3}\begin{aligned}
  {\rm (i)}\;\; &\forall d\in P,\;\;  d(x',x)\le \lambda f(x),\\
   {\rm (ii)}\;\;&f(x')\le \mu f(x).
\end{aligned}\end{equation}

Then for every $x_0\in \dom f$ there exist $\bar x\in X$ such that
\begin{equation}\label{eq3.Arut3}\begin{aligned}
  {\rm (i)}\;\; &f(\bar x)=0=\inf f(X);\\
  {\rm (ii)}\;\; &\forall d\in P,\;\; d(\bar x,x_0)\le\frac {\lambda}{1-\mu}\,f(x_0)\,.
\end{aligned}\end{equation}
\end{corol}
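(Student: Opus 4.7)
The plan is to derive this corollary as a direct consequence of Theorem \ref{t2.Arut} by choosing appropriate values of $\gamma$ and $\eta$. The key observation is that the hypotheses of the corollary provide both a separation between $f(x)$ and $f(x')$ (via the factor $\mu$) and a bound on the distance $d(x',x)$ in terms of $f(x)$, and these can be combined to yield the stronger inequality required by Arutyunov's theorem.

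First, I would take $\eta:\mathbb{R}_+\to\mathbb{R}_+$ to be the linear function $\eta(t)=\mu t$. Since $0<\mu<1$, this $\eta$ is continuous (hence usc) and satisfies $\eta(t)=\mu t<t$ for every $t>0$, so \eqref{eq1.eta} holds. Then condition \eqref{eq2.Arut2}(ii) of Theorem \ref{t2.Arut} becomes $f(x')\le\mu f(x)$, which is exactly \eqref{eq2.Arut3}(ii).

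Next, I would set $\gamma=(1-\mu)/\lambda>0$ and verify condition \eqref{eq2.Arut2}(i). For any $x\in X$ with $f(x)>0$, choose $x'\in X$ satisfying \eqref{eq2.Arut3}. From (ii) we have $f(x)-f(x')\ge(1-\mu)f(x)$, while from (i) we have $d(x',x)\le\lambda f(x)$ for all $d\in P$. Multiplying the second by $\gamma$ gives
\begin{equation*}
\gamma d(x',x)\le\frac{1-\mu}{\lambda}\cdot\lambda f(x)=(1-\mu)f(x)\le f(x)-f(x')\,,
\end{equation*}
which is exactly \eqref{eq2.Arut2}(i). Condition \eqref{eq1a.Arut2} is assumed directly.

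All hypotheses of Theorem \ref{t2.Arut} are thus fulfilled, so for any $x_0\in\dom f$ there exists $\bar x\in X$ with $f(\bar x)=0=\inf f(X)$ and $d(\bar x,x_0)\le f(x_0)/\gamma=\frac{\lambda}{1-\mu}f(x_0)$ for all $d\in P$, establishing \eqref{eq3.Arut3}. There is no real obstacle here: once the correct values $\gamma=(1-\mu)/\lambda$ and $\eta(t)=\mu t$ are identified, the verification is essentially arithmetic. The only point worth pausing on is the compatibility between the two formulations of the Caristi-type condition, but the elementary estimate above resolves it immediately.
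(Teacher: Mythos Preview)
Your proof is correct and follows essentially the same approach as the paper: both reduce the corollary to Theorem~\ref{t2.Arut} with $\eta(t)=\mu t$ and $\gamma=(1-\mu)/\lambda$, the only cosmetic difference being that the paper obtains \eqref{eq2.Arut2}(i) by multiplying \eqref{eq2.Arut3}(i) by $1-\mu$, \eqref{eq2.Arut3}(ii) by $\lambda$, and adding, whereas you first extract $f(x)-f(x')\ge(1-\mu)f(x)$ and then compare.
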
\begin{proof}  Multiplying the inequality (i) in \eqref{eq2.Arut3} by $1-\mu$ and (ii)  by $\lambda$ and adding them, one obtains
$$
\lambda  f(x')+(1-\mu)d(x',x)\le \lambda f(x)\,,$$
or, equivalently,
$$
  f(x')+\frac{1-\mu}{\lambda}\,d(x',x)\le  f(x)\,,$$
i.e. \eqref{eq2.Arut2}.(i)  holds with $\gamma=(1-\mu)/\lambda$. The inequality \eqref{eq2.Arut2}.(ii) also holds with $\eta(t)=\mu t,\, t\in\mathbb{R}_+$.

Consequently, Theorem \ref{t2.Arut} yields \eqref{eq3.Arut3}.
\end{proof}

\begin{remark} It is obvious that any sequentially lsc or with sequentially closed graph function $f:X\to[0,\infty]$ satisfies \eqref{eq1a.Arut2}.
In the case of an lsc function the result follows from the Ekeland Variational Principle (see Theorem \ref{t.Arut-Fqg}).
\end{remark}
\begin{example}[\cite{arut-gel09}] Let $(X_1,d_1) $ and $(X_2,d_2)$ be metric spaces, $\emptyset \ne A\subseteq X_1,\, h:X_1\to X_2$ continuous
and $g:A\to Y$ with the graph closed in $X_1\times X_2$. Then the function $f:X_1\to[0,\infty]$ given by
$$
f(x)=\begin{cases}
  d_2(h(x),g(x)) \quad&\mbox{for }\; x\in A\\
  \infty   \quad&\mbox{for }\; x\in X_1\smallsetminus A
\end{cases}$$
satisfies \eqref{eq1a.Arut2}.
\end{example}

Indeed, if
$$
x_n\to x\;\mbox{ and }\; f(x_n)\to 0\,,$$
then, by the continuity of $h$,  $h(x_n)\to h(x)$. Since $d_2(h(x_n),g(x_n))\to 0$, it follows $g(x_n)\to h(x)$,
so that $g(x)=h(x)$ (because $g$ has closed graph).

Hence, $$f(x)=d_2(h(x),g(x))=d_2(h(x),h(x))=0\,.$$

\textbf{Acknowledgements.} The author expresses his warmest thanks to reviewers for their substantial and pertinent remarks and suggestions that led to an essential improvement of the paper.

\end{document}